\definecolor{black}{rgb}{0.0, 0.0, 0.0}
\definecolor{red}{rgb}{1.0, 0.5, 0.5}
\title[   ]{Contraction property for large perturbations of shocks of the barotropic Navier-Stokes system}
\author[Kang]{Moon-Jin Kang}
\address[Moon-Jin Kang]{
\newline Department of Mathematic \& Research Institute of Natural Sciences, \newline Sookmyung Women's University, Seoul 140-742, Korea}
\email{moonjinkang@sookmyung.ac.kr}
\author[Vasseur]{Alexis F. Vasseur}
\address[Alexis F. Vasseur]{\newline Department of Mathematics, \newline The University of Texas at Austin, Austin, TX 78712, USA}
\email{vasseur@math.utexas.edu}
\newtheorem{theorem}{Theorem}[section]
\newtheorem{lemma}{Lemma}[section]
\newtheorem{proposition}{Proposition}[section]
\newtheorem{remark}{Remark}[section]
\newcommand{\bbr}{\mathbb R}
\newcommand{\bbt} {\mathbb T}
\newcommand{\pt}{p(\tilde{v}_\eps)}
\newcommand{\deo}{\delta_0}
\newcommand{\deu}{\delta_0}
\numberwithin{figure}{section}
\newcommand{\beq}{\begin{equation}}
\newcommand{\eeq}{\end{equation}}
\newcommand{\bsp}{\begin{split}}
\newcommand{\esp}{\end{split}}
\def\eps{\varepsilon }
\newcommand\adots{\mathinner{\mkern2mu\raise1pt\hbox{.}
\mkern3mu\raise4pt\hbox{.}\mkern1mu\raise7pt\hbox{.}}}
\def\charf {\mbox{{\text 1}\kern-.30em {\text l}}}
\newcommand \zo {Z_1}
\newcommand \zt {Z_2}
\newcommand \deltat {\delta_2}
\newcommand{\Rd}{\mathcal{R}_\delta}
\newcommand{\vt}{\tilde{v}_\eps}
\begin{document}
\bibliographystyle{plain}

\date{\today}

\subjclass{76N15, 35B35,   35Q30} \keywords{Contraction, Shock, Compressible Navier-Stokes, Stability, Relative entropy, Conservation law.}

\thanks{\textbf{Acknowledgment.}  The first author was partially supported by the NRF-2017R1C1B5076510.
The second author was partially supported by the NSF grant: DMS 1614918. 
}

\begin{abstract}
This paper is dedicated to the construction of a pseudo-norm, for which small shock profiles of the barotropic Navier-Stokes equations have a contraction property. This contraction property holds in the class of any large solutions to the barotropic Navier-Stokes equations. It implies a stability condition which is independent of the strength of the viscosity. The proof is based on the relative entropy method, and is reminiscent to the notion of a-contraction first introduced by the authors in the hyperbolic case.  \end{abstract}
\maketitle \centerline{\date}


\tableofcontents

\section{Introduction}
\setcounter{equation}{0}

In this article, we consider  the one-dimensional barotropic Navier-Stokes equations in the Lagrangian coordinates:
\begin{align}
\begin{aligned}\label{main}
\left\{ \begin{array}{ll}
       v_t - u_x =0\\
       u_t+p(v)_x = \Big(\frac{\mu(v)}{v} u_x\Big)_x, \end{array} \right.
\end{aligned}
\end{align}
where $v$ denotes the specific volume, $u$ is the fluid velocity, and $p(v)$ is the pressure law. We consider the case of  polytropic perfect gas where the pressure verifies
\beq\label{pressure}
p(v)= v^{-\gamma},\quad \gamma> 1,
\eeq
with  $\gamma$ the adiabatic constant. The quantity  $\mu(v) = bv^{-\alpha}$ is the viscosity coefficient. 
Notice that if $\alpha>0$, $\mu(v)$ degenerates near the vacuum, i.e., near $v=+\infty$.
Very often, the viscosity coefficient is assumed to be constant, i.e., $\alpha=0$. However, in the  physical context  the viscosity of a gas depends on the temperature (see Chapman and Cowling \cite{CC}).  In the barotropic case,  the temperature depends directly on the density ($\rho=1/v$). 
The viscosity  is expected to degenerate near the vacuum as a power of the density, which is translated into $\mu(v) = bv^{-\alpha}$ in terms of $v$ with $\alpha>0$. Global strong solutions  of the system \eqref{main} can be constructed for a large family of initial data without vacuum. These solutions are also unique  (see Constantin-Drivas-Nguyen-Pasqualotto \cite{CDNP},  Haspot \cite{H3} and \cite{MV_sima}).  For simplification, we will restrict in this paper to the case where $\alpha=\gamma$.

The system \eqref{main} admits viscous shock waves connecting two end states 
$(v_-,u_-)$ and $(v_+,u_+)$, provided the two end states satisfy the Rankine-Hugoniot condition and the Lax entropy condition (see Matsumura and Wang \cite{MW}): 
\begin{align}
\begin{aligned}\label{end-con} 
&\exists~\sigma\quad\mbox{s.t. }~\left\{ \begin{array}{ll}
       -\sigma (v_+-v_-) - (u_+-u_-) =0,\\
       -\sigma (u_+-u_-) +p(v_+)-p(v_-)=0, \end{array} \right. \\
&\mbox{and either $v_->v_+$ and $u_->u_+$ or $v_-<v_+$ and $u_->u_+$ holds.}        
\end{aligned}
\end{align} 
In other words, for given constant states $(v_-,u_-)$ and $(v_+,u_+)$ satisfying \eqref{end-con}, there exists a viscous shock wave $(\tilde v,\tilde u)(x-\sigma t)$ as a solution of
\begin{align}
\begin{aligned}\label{shock_0} 
\left\{ \begin{array}{ll}
       -\sigma \tilde v' - \tilde u' =0,\\
       -\sigma \tilde u'+p( \tilde v)'= \Big(\frac{\mu( \tilde v)}{ \tilde v}  \tilde u'\Big)'\\
       \lim_{\xi\to\pm\infty}(\tilde v,\tilde u)(\xi)=(v_{\pm}, u_\pm). \end{array} \right.
\end{aligned}
\end{align} 
Here, if $v_->v_+$, the solution of \eqref{shock_0} is a 1-shock wave with velocity $\sigma=- \sqrt{-\frac{p(v_+)-p(v_-)}{v_+-v_-}}$, whereas if $v_-<v_+$, that is a 2-shock wave with $\sigma= \sqrt{-\frac{p(v_+)-p(v_-)}{v_+-v_-}}$.  

The stability of the viscous shock waves for the compressible Navier-Stokes system is a very important issue in both mathematical and physical viewpoints. 
In the case of constant viscosity ($\alpha=0$), Matsumura-Nishihara \cite{MN} showed the time-asymptotic stability for small initial perturbations with integral zero. Later on, the assumption on integral zero was removed by Mascia-Zumbrun \cite{MZ} and Liu-Zeng \cite{LZ}. We also refer to Barker-Humpherys-Laffite-Rudd-Zumbrun \cite{BHLRZ,HLZ} and the references therein for the spectral stability of small perturbations of large shocks. For the system \eqref{main} with degenerate viscosity ($\alpha>0$), Matsumura-Wang \cite{MW} showed the asymptotic stability for small initial perturbations with integral zero under the assumption $\alpha\ge\frac{1}{2}(\gamma-1)$. This assumption was recently removed by the second author and Yao \cite{VY}. 

To the best of our knowledge, up to now, there were  no result on stability, independent of the size of the  perturbation, for viscous shocks of  compressible Navier-Stokes system.

The main contribution of this article is to show the existence of a  contraction property for viscous shocks, up to a shift,   for any possibly  large perturbations, in the case of  the Navier-Stokes system \eqref{main} with $\alpha=\gamma$ (see Theorem \ref{thm_general}). 

This result reaches a new milestone in the study of contractions of shock waves of conservation laws  based on the relative entropy. In the inviscid case, the $L^2$ contraction of  shocks was first  obtained by Leger \cite{Leger}  for scalar conservation laws (see also Adimurthi,  Goshal, and Veerappa Gowda \cite{LP} for contraction in the $L^p$ norm). In \cite{Serre-Vasseur}, it was shown that this property is not true, for most systems, when considering homogenous norms. However it is true, at least for extremal shocks, if we consider an adapted non-homogenous  pseudo-norm \cite{LV, Vasseur-2013}. This was theorized with the notion of $a$-contraction in \cite{KVARMA}. There, the case of intermediate shocks was also considered. This situation is more delicate. The contraction works for some systems, as the Euler system with energy \cite{SV_16dcds, SV_16}, and can fail for others \cite{Kang}. 
In the viscous case, based on the $L^2$ norm  a first result was obtained for  viscous shocks in the case of the viscous Burgers equation \cite{Kang-V-1} (see also \cite{Kang19}). Our paper can be seen as a generalization of this result in the system case. Of course,  the system case is far more involved. Especially, since these results are independent of the size of the perturbations, by rescaling the equation, they are valid uniformly in the vanishing viscosity limit. Because of the negative result of \cite{Serre-Vasseur} for the Euler system,  the result cannot be true for the Navier-Stokes equations when considering a homogenous pseudo-norm.
This difficulty is compounded with the degenerate parabolic structure of Navier-Stokes, where the equation on $v$ is purely hyperbolic.\\
We also mention a first attempt to extend the theory to the multi-variables setting in the scalar case \cite{KVW}, and the application of the method for the study of asymptotic limits  \cite{CV,VW}. 

In an analytical viewpoint, handling the contraction property of the viscous shocks is pretty different  from the  inviscid situation. The main difficulty is due to the  balance between the hyperbolic and parabolic terms.

\subsection{Main result}
We first introduce a relative functional $E(\cdot|\cdot)$ defined as follows:
\begin{align}
\begin{aligned}\label{psedo}
&\mbox{for any functions } v_1,u_1,v_2,u_2,\\ 
&E((v_1,u_1)|(v_2,u_2)) :=\frac{1}{2}\big(u_1 +p(v_1)_x -u_2 -p(v_2)_x  \big)^2 +Q(v_1|v_2),
\end{aligned}
\end{align}
where $Q(v_1|v_2):=Q(v_1)-Q(v_2)-Q'(v_2)(v_1-v_2)$ is a relative functional associated with the strictly convex function $Q(v):=\frac{v^{-\gamma+1}}{\gamma-1}$. The functional $E$ is associated to the BD entropy (see Bresch-Desjardins \cite{BD_03,BD_06,BDL}). Since $Q(v_1|v_2)$ is positive definite, \eqref{psedo} is also positive definite, that is, for any functions $(v_1,u_1)$ and $(v_2,u_2)$ we have $E((v_1,u_1)|(v_2,u_2))\ge 0$, and 
\[
\quad E((v_1,u_1)|(v_2,u_2))= 0~\mbox{a.e.} \quad\Leftrightarrow\quad (v_1,u_1)=(v_2,u_2)~\mbox{a.e.}
\]
Our main result shows  a  contraction property  measured by the relative functional \eqref{psedo}. 
Our result is stated for the system \eqref{main} with the viscosity $\mu(v)=\gamma v^{-\gamma}$, i.e., the exponent $\alpha$ is identical to the adiabatic constant $\gamma$. 
A new approach developed in this paper can be applied to the case of more general viscosity (see \cite{KV_vanishing}). 

\begin{theorem}\label{thm_general}
Consider the system \eqref{main}-\eqref{pressure} with the viscosity $\mu(v)=\gamma v^{-\gamma}$, $\gamma>1$.  For a given constant state $(v_-,u_-)\in\bbr^+\times\bbr$, 
there exists  constants  $\eps_0, \delta_0>0$ such that the following is true.\\
For any $\eps<\eps_0$, $\delta_0^{-1}\eps<\lambda<\delta_0$, and any $(v_+,u_+)\in\bbr^+\times\bbr$ satisfying \eqref{end-con} with $|p(v_-)-p(v_+)|=\eps$, there exists  a smooth monotone function $a:\bbr\to\bbr^+$ with $\lim_{x\to\pm\infty} a(x)=1+a_{\pm}$ for some  constants $a_-, a_+$ with $|a_+-a_-|=\lambda$ such that the following holds.\\
Let $\tilde U:=(\tilde v,\tilde u)$ be the viscous shock connecting $(v_-,u_-)$ and $(v_+,u_+)$ as a solution of \eqref{shock_0}.
For any solution $U:=(v,u)$ to \eqref{main} with initial data $U_0:=(v_0,u_0)$ satisfying $\int_{-\infty}^{\infty} E(U_0| \tilde U) dx<\infty$, there exists a shift $X\in W^{1,1}_{loc}(\bbr^+)$ such that 
\beq
 \frac{d}{dt}\int_{-\infty}^{\infty} a(x) E\big(U(t,x+X(t))| \tilde U (x)\big) dx \le 0,
\eeq
and 
\begin{align}
\begin{aligned} \label{est-shift}
&|\dot X(t)|\le \frac{1}{\eps^2}(1 + f(t)),\quad t>0,\\
&\mbox{for some positive function $f$ satisfying}\quad\|f\|_{L^1(0,\infty)} \le\frac{2\lambda}{\delta_0\eps}\int_{-\infty}^{\infty} E(U_0| \tilde U) dx.
\end{aligned}
\end{align}
\end{theorem}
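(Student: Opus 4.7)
The plan is to exploit the BD-entropy change of variables made available by the exponent relation $\alpha=\gamma$. Since $\mu(v)/v = \gamma v^{-\gamma-1} = -p'(v)$, one computes from $v_t=u_x$ that $(\mu(v)u_x/v)_x = -(p'(v)u_x)_x = -(p(v))_{xt}$, so the momentum equation rewrites as $h_t + p(v)_x = 0$ for the effective velocity $h:=u+p(v)_x$, while the continuity equation becomes the parabolic-in-$v$ relation $v_t = h_x - p(v)_{xx}$. In these variables the relative functional simplifies to $E = \tfrac12(h-\tilde h)^2 + Q(v|\tilde v)$, and the viscous shock becomes a stationary profile $(\tilde v,\tilde h)$ in the moving frame $\xi=x-\sigma t$.

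First, working in the shifted moving frame $\xi = x-\sigma t-X(t)$, I compute $\tfrac{d}{dt}\int a(\xi)\,E(U|\tilde U)\,d\xi$. After integration by parts the result splits into four pieces: (i) a shift term $-\dot X\int a' E\,d\xi$; (ii) the relative hyperbolic flux $\int a'\,G(v|\tilde v)\,d\xi$ for a suitable relative-flux functional $G$, which is the weight-induced good term (with definite sign provided $a'$ is taken to match the Lax direction of the shock); (iii) the parabolic dissipation, yielding positive weighted $L^2$ control of $\partial_\xi(p(v)-p(\tilde v))$; and (iv) cross terms involving the shock-profile derivatives $\tilde v',\tilde h'$, of amplitude $\eps^2$ and concentrated on a window of width $\eps^{-1}$.

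Second, I define the shift by an ODE of the form $\dot X(t) = -\eps^{-2}\,\Phi[U(t,\cdot)]$, where $\Phi$ is a linear functional of $(v-\tilde v,h-\tilde h)$ built from $\tilde v',\tilde h'$ and chosen precisely so that the resulting contribution in (i) cancels the worst linear-in-perturbation piece of (iv). The $\eps^{-2}$ normalization is dictated by the $\eps^2$ amplitude of $\Phi$'s kernel; Cauchy--Schwarz against $\sqrt{E}$ then yields $|\dot X(t)|\le \eps^{-2}(1+f(t))$ with $f(t)$ pointwise controlled by the instantaneous $\int E$ plus the dissipation appearing below, which after integration in time produces the stated $L^1$ bound on $f$.

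Third, and this is the main obstacle, I must show that after the cancellation the surviving cross terms together with the higher-order perturbation terms are dominated by the sum of the weight-induced good term $-\int a' G$ and the weighted parabolic dissipation. The proof proceeds by a local/global decomposition. On the support of $a'$ the relative-flux integrand $G$ is expanded to quadratic order in $(v-\tilde v,h-\tilde h)$, reducing matters to a pointwise sign condition on a $2\times 2$ quadratic form whose entries depend on $a'/a$ and on the shock profile; the balance $\delta_0^{-1}\eps \le \lambda \le \delta_0$ is exactly what makes this form positive with enough margin to absorb the remaining cross terms. Away from the shock, $a'$ is negligible and the parabolic dissipation together with a one-dimensional Poincar\'e--Sobolev inequality controls the residual nonlinear contributions. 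The technical heart is that no smallness is assumed on $U-\tilde U$: regions where $v$ is large or near vacuum must be handled using the strict convexity of $Q(\cdot|\tilde v)$ together with the fact that the parabolic dissipation naturally controls $\partial_\xi p(v)$ rather than $\partial_\xi v$ --- which is precisely why the BD formulation is indispensable in this large-perturbation setting.
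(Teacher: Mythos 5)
Your first two steps correctly identify the paper's starting point: the BD change of variable $h=u+p(v)_x$ (valid precisely because $\mu(v)/v=-p'(v)$ when $\alpha=\gamma$), the reduction to the system with parabolic regularization in $v$, and a relative-entropy identity with a shift whose time derivative splits into a shift term, weighted hyperbolic terms, and a dissipation in $p(v)-p(\tilde v)$. But the mechanism you propose for closing the estimate is not the one that works, and I believe it cannot work. You take the shift functional $\Phi$ to be \emph{linear} in $(v-\tilde v,h-\tilde h)$ and use it to cancel the worst linear cross term; the paper instead drives the shift by the \emph{convex} functional $Y(U)=-\int a'\eta(U|\tilde U)+\int a\,\partial_\xi\nabla\eta(\tilde U)\cdot(U-\tilde U)$ of \eqref{badgood}, through the saturated ODE \eqref{Phi-d}--\eqref{X-def}. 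The point of the convexity (which requires $a'\neq 0$, i.e.\ $\lambda>0$) is that the dichotomy $|Y|\ge\eps^2$ versus $|Y|\le\eps^2$ either kills all bad terms outright or delivers a \emph{quantitative smallness} of the perturbation in the layer region, namely $\int\eps e^{-C\eps|\xi|}Q(v|\tilde v)\,d\xi\lesssim(\eps/\lambda)^2$. A linear $\Phi$ gives only that one weighted mean of the perturbation vanishes, which provides no smallness whatsoever for an unconditionally large perturbation; this is the only source of smallness in the entire argument, and without it nothing in your Step 3 can start.

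The second gap is in your Step 3 itself. Reducing the balance between the weight-induced good term and the surviving bad terms to "a pointwise sign condition on a $2\times2$ quadratic form" fails for two reasons. First, the leading parts of the bad term $\frac{1}{2\sigma_\eps}\int a'|p(v)-p(\tilde v)|^2$ and the good term $\sigma_\eps\int a'Q(v|\tilde v)$ are both of size $\lambda/\eps$ times the quadratic quantity and cancel \emph{exactly} at that order; the decision is made at the next order, between $\sigma_\eps\int a\,\tilde v_\eps'\,p(v|\tilde v)$, the \emph{cubic} remainder of $Q(v|\tilde v)$, and the dissipation -- this requires a third-order expansion, the sharp weighted Poincar\'e inequality of Lemma \ref{lem-poin} (with its exact constant $1/2$), the pointwise bound of Lemma \ref{lem-sup}, and the extra negative term $-Y^2/\eps^4$ from the shift; no quadratic-form sign condition, pointwise or integrated, suffices (this is what Propositions \ref{prop:main3}, \ref{prop:W} and \ref{prop:algebra} accomplish). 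Second, near vacuum the bad term behaves like $p(v)^2=v^{-2\gamma}$ while $Q(v|\tilde v)\sim v^{-(\gamma-1)}$, so the integrands are not pointwise comparable even in sign-favorable regions; the paper must spend a portion of the dissipation $\int a|\partial_\xi(p(v)-p(\tilde v))|^2$ through a Poincar\'e-type argument and a truncation of the large values of $|p(v)-p(\tilde v)|$ (Section \ref{section-finale}) to control these contributions. Your proposal acknowledges the vacuum difficulty but places it "away from the shock," whereas it must be resolved inside the layer as well, and only the convexity-induced smallness makes that possible.
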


\begin{remark}
Theorem \ref{thm_general} provides a contraction property for viscous shocks with suitably small amplitude parametrized by $\eps=|p(v_-)-p(v_+)|$. This smallness together with \eqref{end-con} implies $|v_--v_+|=\mathcal{O}(\eps)$ and $|u_--u_+|=\mathcal{O}(\eps)$. 
However, for such a fixed small shock, the contraction holds for any weak solutions  to (\ref{main}), without any smallness condition imposed on $U_0$. This is important to study the  inviscid limit problem  ($\nu\to0$) of: 
\begin{align}
\begin{aligned} \label{inveq}
\left\{ \begin{array}{ll}
       v^{\nu}_t - u^{\nu}_x =0\\
       u^{\nu}_t+p(v^{\nu})_x = \nu\Big(\frac{\mu(v^{\nu})}{v^{\nu}} u^{\nu}_x\Big)_x. \end{array} \right.
\end{aligned}
\end{align}
By rescaling the result of Theorem \ref{thm_general} as $(t,x)\to (t/\nu,x/\nu)$ we obtain the exact same theorem for the system (\ref{inveq}). Therefore we obtain a stability result on viscous shocks of fixed strength which is independent of the strength of the viscosity $\nu$ (see \cite{KV_vanishing}). 
\end{remark}

\begin{remark}
The contraction property is non-homogenous in $x$, as measured by the function $x\to a(x)$. This is consistant with the hyperbolic case (with $\nu=0$). In the  hyperbolic case, it was shown in \cite{Serre-Vasseur} that a homogenous contraction cannot hold  for the full Euler system. However, the contraction property is true if we consider a non-homogenous pseudo-distance \cite{Vasseur-2013} providing the so-called $a$-contraction \cite{KVARMA}. Our main result shows that the non-homogeneity of the pseudo-distance can be chosen of a similar size as the strength of the shock (as measured by the quantity $\lambda$).
\end{remark}

\subsection{Transformation of the system \eqref{main}}
We first introduce a new effective velocity $h:=u+p(v)_x$. The system \eqref{main} with $\mu(v)=\gamma v^{-\gamma}$ is then  transformed into
\begin{align}
\begin{aligned}\label{NS_1}
\left\{ \begin{array}{ll}
       v_t - h_x = -(p(v))_{xx}\\
       h_t+p(v)_x =0. \end{array} \right.
\end{aligned}
\end{align}
Notice that the above system has a parabolic regularization on the specific volume, contrary to the regularization on the velocity for the original system \eqref{main}. This is better for our analysis, since the hyperbolic part of the system is linear in $u$ (or $h$) but nonlinear in $v$ (via the pressure).
This effective velocity  was first introduced by Shelukhin \cite{Shel}  for $\alpha=0$, and in the general case (in Eulerian coordinates)  by Bresch-Desjardins \cite{BD_03,BD_06,BDL}, and Haspot \cite{H1,H2,H3}.  It was also used  in \cite{VY}.

As mentioned in Theorem \ref{thm_general}, we consider  shock waves with suitably small amplitude $\eps$. For that, let $(\tilde v_\eps,\tilde u_\eps)(x-\sigma_\eps t)$ denote a shock wave with amplitude $|p(v_-)-p(v_+)|=\eps$ as a solution of \eqref{shock_0} with $\mu(v)=\gamma v^{-\gamma}$. Then, setting $\tilde h_\eps:=\tilde u_\eps + (p(\tilde v_\eps))_x$, the shock wave $(\tilde v_\eps,\tilde h_\eps)(x-\sigma_\eps t)$ satisfies
\begin{align}
\begin{aligned}\label{small_shock1} 
\left\{ \begin{array}{ll}
       -\sigma_\eps \tilde v_{\eps}' - \tilde h_{\eps}' =-(p(\tilde v_\eps))''\\
       -\sigma_\eps \tilde h_{\eps}'+p( \tilde v_\eps)'=0\\
       \lim_{\xi\to\pm\infty}(\tilde v_\eps, \tilde h_\eps)(\xi)=(v_{\pm}, u_\pm). \end{array} \right.
\end{aligned}
\end{align}

For simplification of our analysis, we rewrite \eqref{NS_1} into the following system, based on the change of variable $(t,x)\mapsto (t, \xi=x-\sigma_\eps t)$: 
\begin{align}
\begin{aligned}\label{NS}
\left\{ \begin{array}{ll}
       v_t -\sigma_\eps v_{\xi} - h_{\xi} = -(p(v))_{\xi\xi}\\
       h_t-\sigma_\eps h_{\xi}+p(v)_{\xi} =0\\
       v|_{t=0}=v_0,\quad h|_{t=0}=u_0. \end{array} \right.
\end{aligned}
\end{align}

\begin{remark}
In \eqref{NS}, the dissipation is in $v$ and has the specific form $(-p(v))_{\xi\xi}$, whose structure is due to the fact that $\alpha=\gamma$. This simplifies our analysis a lot, since we consider the entropy $Q(v)$ with $Q'(v)=-p(v)$.
\end{remark}

Theorem \ref{thm_general} is a direct consequence of the following theorem on the contraction of shocks for the system \eqref{NS_1}. To measure the contraction, we use the relative entropy associated to the entropy of \eqref{NS_1} as
\[
\eta((v_1,h_1)|(v_2,h_2)) :=\frac{|h_1-h_2|^2}{2} +Q(v_1|v_2),
\]
where $Q(v_1|v_2):=Q(v_1)-Q(v_2)-Q'(v_2)(v_1-v_2)$ and $Q(v):=\frac{v^{-\gamma+1}}{\gamma-1}$.

\begin{theorem}\label{thm_main}
For a given constant state $(v_-,u_-)\in\bbr^+\times\bbr$, there exist constants $\eps_0,\delta_0>0$ such that the following holds.\\
For any $\eps<\eps_0$, $\delta_0^{-1}\eps<\lambda<\delta_0$, and any $(v_+,u_+)\in\bbr^+\times\bbr$ satisfying \eqref{end-con} with $|p(v_-)-p(v_+)|=\eps$, there exists a  smooth monotone function $a:\bbr\to\bbr^+$ with $\lim_{x\to\pm\infty} a(x)=1+a_{\pm}$ for some  constants $a_-, a_+$ with $|a_--a_+|=\lambda$ such that the following holds.\\
Let $\tilde U_\eps:=(\tilde v_\eps,\tilde h_\eps)$ be a viscous shock connecting $(v_-,u_-)$ and $(v_+,u_+)$ as a solution of \eqref{small_shock1}.
For any solution $U:=(v,h)$ to \eqref{NS} with initial data $U_0:=(v_0,u_0)$ satisfying $\int_{-\infty}^{\infty} \eta(U_0| \tilde U_\eps) dx<\infty$, there exists a shift function $X\in W^{1,1}_{loc}(\bbr^+)$ such that 
\beq\label{cont_main}
 \frac{d}{dt}\int_{-\infty}^{\infty} a(\xi) \eta\big(U(t,\xi+X(t))| \tilde U_\eps (\xi)\big) d\xi \le 0,
\eeq
and
\begin{align}
\begin{aligned} \label{est-shift1}
&|\dot X(t)|\le \frac{1}{\eps^2}(1 + f(t)),\quad t>0,\\
&\mbox{for some positive function $f$ satisfying}\quad\|f\|_{L^1(0,\infty)} \le \frac{2\lambda}{\delta_0\eps}\int_{-\infty}^{\infty} \eta(U_0| \tilde U_\eps) d\xi.
\end{aligned}
\end{align}
\end{theorem}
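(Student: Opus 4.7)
The plan is to apply the weighted relative entropy method ($a$-contraction) in the moving frame $\xi=x-\sigma_\eps t$ of the shock, together with a Rademacher-type shift $X(t)$ defined by an ODE. For the weight I would take a smooth monotone function $a$ built from the profile itself, for example
\begin{equation*}
a(\xi)\;=\;1+a_-+\lambda\,\frac{p(v_-)-p(\tilde v_\eps(\xi))}{\eps},
\end{equation*}
so that $a$ is monotone with oscillation exactly $\lambda$ and $a'$ is a localized positive bump of the same shape as $\tilde v_\eps'$, of height $\mathcal{O}(\lambda\eps)$ and mass $\mathcal{O}(\lambda)$. The construction of $X$ will come from a first-order ODE designed to cancel the linear-in-perturbation portion of the leading bad term.

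Differentiating the weighted relative entropy along \eqref{NS}, using $Q'(v)=-p(v)$ and writing $V=v-\tilde v_\eps$, $H=h-\tilde h_\eps$, $P=p(v)-p(\tilde v_\eps)$, and pairing $-(p(v))_{\xi\xi}$ against $Q'(v)-Q'(\tilde v_\eps)$, repeated integrations by parts yield a decomposition of the form
\begin{equation*}
\frac{d}{dt}\!\int a\,\eta(U(\cdot+X)\mid\tilde U_\eps)\,d\xi \;=\; \dot X\,Y(U)\;+\;\mathcal{B}(U)\;-\;\mathcal{G}(U)\;-\;\mathcal{D}(U),
\end{equation*}
where $\mathcal{D}(U)=\int a\,P_\xi^{\,2}\,d\xi$ is the parabolic dissipation, $\mathcal{G}(U)$ gathers the \emph{good} weight contributions (essentially $-\sigma_\eps\int a'\,\eta(U|\tilde U_\eps)\,d\xi$ plus a favorable coupling with $p(\tilde v_\eps)_{\xi\xi}$), $\mathcal{B}(U)$ collects the remaining \emph{bad} terms proportional to $a'$ or to $\tilde v_\eps'$, and $Y(U)$ is a linear functional of $(V,H)$ localized on the shock layer. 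I would then define $\dot X(t):=-Y(U(t))/\mathbf{M}(t)$, with $\mathbf{M}(t)$ a normalizer uniformly of size $\eps^2$, chosen exactly so that the linear-in-perturbation part of $\mathcal{B}$ is absorbed by $\dot X\,Y$.

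The main obstacle, and the heart of the proof, is the pointwise-in-time inequality
\begin{equation*}
\mathcal{B}(U)-\mathcal{G}(U)-\mathcal{D}(U)+\dot X\bigl(Y(U)-\text{cancelled linear part}\bigr)\;\le\;0,
\end{equation*}
which must hold for \emph{arbitrarily large} perturbations. This forces a sharp Poincaré-type inequality bounding the hyperbolic bad quantity $\int|a'|\,Q(v|\tilde v_\eps)\,d\xi$ by a combination of $\mathcal{D}(U)$ and the favorable $-\sigma_\eps\!\int a'\,\eta\,d\xi$. Linearization around $\tilde U_\eps$ is forbidden, so the nonlinear quadratic form must be studied directly: I would split into the regime $|V|\lesssim\eps$ (treated by second-order Taylor expansion together with the shock-layer scaling $\tilde v_\eps'\sim\eps^2$ and length $\eps^{-1}$) and the regime $|V|\gtrsim\eps$ (handled through the pointwise convexity bound $Q''(v)\ge c\,v^{-\gamma-1}$ together with the hyperbolic good term, which is of order $\lambda\eps^{-1}$ on the shock layer). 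The two-sided condition $\delta_0^{-1}\eps<\lambda<\delta_0$ is precisely what balances these estimates: $\lambda\gg\eps$ is needed for $\mathcal{G}$ to dominate the residual cross nonlinearities on the shock layer, while $\lambda\ll 1$ keeps the construction in the small-amplitude regime in which $\mathcal{D}$ controls the bad term through the Poincaré inequality.

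Finally, the shift estimate \eqref{est-shift1} is read off the defining ODE: $|\mathbf{M}(t)|\gtrsim\eps^2$ is bounded away from zero, while $|Y(U)|$ splits into a profile-driven piece bounded by $1$ and a perturbation-driven piece controlled by $\sqrt{\int|a'|\,\eta(U|\tilde U_\eps)\,d\xi}\lesssim\sqrt{\lambda\eps^{-1}\int a\,\eta\,d\xi}$. Integrating \eqref{cont_main} in $t$ converts the contraction into an $L^1$-in-time bound for the perturbation-driven piece, with the exact constant $2\lambda/(\delta_0\eps)$ stated in \eqref{est-shift1}.
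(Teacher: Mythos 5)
Your overall architecture (weighted relative entropy in the shock frame, shift defined by an ODE, a Poincar\'e-type inequality to close the estimate) matches the paper, and your weight $a$ is essentially the one used there. But there are two genuine gaps that would make the proof fail as sketched. First, you describe $Y(U)$ as ``a linear functional of $(V,H)$'' and design $\dot X=-Y/\mathbf{M}$ so as to ``cancel the linear-in-perturbation part of $\mathcal{B}$.'' In the actual argument $Y$ is \emph{not} linear: it contains the quadratic term $-\int a'\eta(U|\tilde U_\eps)\,d\xi$, and this convexity is the entire point of the non-homogeneous weight. The shift is chosen so that $\dot X\,Y\le -Y^2/\eps^4$ when $|Y|\le\eps^2$ and $\dot X\,Y\le -2|\mathcal{B}|$ otherwise; the constraint $|Y(U)|\le\eps^2$ then converts, via the convex part of $Y$, into the weak smallness estimate $\int \eps e^{-C\eps|\xi|}Q(v|\tilde v_\eps)\,d\xi\lesssim(\eps/\lambda)^2$ on the layer. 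A linear $Y$ would only pin a weighted mean of the perturbation and gives no control whatsoever over large perturbations, so your cancellation mechanism cannot absorb $\mathcal{B}$ for arbitrary data. Second, in the moderate regime you propose to close by ``second-order Taylor expansion.'' This cannot work: after rescaling, the quadratic parts of $\mathcal{B}$ and of the good term $\mathcal{G}_2$ cancel exactly at order $\lambda/\eps$, and the sign of what remains is decided by \emph{cubic} terms together with a sharp nonlinear Poincar\'e inequality (with the explicit constant $\theta=\sqrt{5-\pi^2/3}$ and a polynomial minimization with the constraint coming from $|Y_g|\lesssim\eps^2/\lambda$). A second-order expansion leaves an inequality that is borderline with no sign.

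Two further points. The truncation threshold separating ``moderate'' from ``large'' values is a fixed $\delta_3$ independent of $\eps$ (applied to $|p(v)-p(\tilde v_\eps)|$), not $|V|\lesssim\eps$ versus $|V|\gtrsim\eps$; the smallness available in the moderate regime is only of order $\eps/\lambda$, which is much larger than $\eps$, and controlling the tails requires the $h$-dependent good term $\mathcal{G}_1$ and a dedicated lemma. Finally, the $L^1$-in-time bound on $f$ in \eqref{est-shift1} does not follow from ``integrating the contraction'': integrating $\frac{d}{dt}\int a\eta\le 0$ gives only a uniform-in-time bound. One needs the strict form $\frac{d}{dt}\int a\eta\le -\delta_0(\eps/\lambda)|\mathcal{B}|$, which is exactly what yields $\|\mathcal{B}\|_{L^1(0,\infty)}\le(\lambda/(\delta_0\eps))\int\eta(U_0|\tilde U_\eps)$, and the cap $|\Phi_\eps|\le\eps^{-2}$ in the ODE, together with the factor $2|\mathcal{B}|+1$, is what makes $|\dot X|\le\eps^{-2}(1+f)$ with $f=2|\mathcal{B}|$.
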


Notice that it is enough to prove Theorem \ref{thm_main}  for  1-shocks. Indeed, the result for 2-shocks is obtained by the change of variables $x\to -x$, $u\to -u$, $\sigma_\eps\to -\sigma_\eps$. \\
Therefore, from now on, we consider a 1-shock $(\tilde v_\eps,\tilde h_\eps)$, i.e., $v_->v_+$, $u_->u_+$, and
\beq\label{RH-con}
\sigma_\eps=- \sqrt{-\frac{p(v_+)-p(v_-)}{v_+-v_-}}.
\eeq

{\bf Notations }
$\bullet$  Throughout the paper, $C$ denotes a positive constant which may change from line to line, but which stays independent on $\eps$ (the shock strength) and $\lambda$ (the total variation of the function $a$). 
The paper will consider two smallness conditions, one on $\eps$, and the other on $\eps/\lambda$. In the argument, $\eps$ will be far smaller than $\eps/\lambda$ .\\
$\bullet$ To avoid confusion, for any function $F$ of $x$, we denote: $F'(v)=\frac{d}{dv}F(v)$, $F(v)'=\frac{d}{dx}F(v)$.

\subsection{Ideas of the proof}
In all the computations $\eps>0$ is the size of the fixed shock. We remind the reader that the perturbation $U_0-\tilde{U}_\eps=(v_0-\tilde{v}_\eps, h_0-\tilde{h}_\eps)$ can be unconditionally big. The non-homogeneity of the semi-norm comes through the function $a$. This function is decreasing in the case of a 1-shock, and increasing in the case of a 2-shock. The strength of this non-homogeneity is measured by the number $\lambda>0$, which is the difference between the values of $a$ at $-\infty$ and $+\infty$ (see (\ref{weight-a})).  Typically, $\lambda$ is small, but it can be far bigger than $\eps$. Actually, in the analysis, we will consider some smallness on both $\eps$ and $\eps/\lambda$, $\eps$ being much smaller that $\eps/\lambda$. Note that the velocity of the shock ${\sigma}_\eps$ has the same sign as $a'$, so the quantity ${\sigma}_\eps a'$ is positive. The relative entropy computation (see Lemma \ref{lem-rel}) gives that 
\begin{eqnarray*}
&& \frac{d}{dt}\int_{-\infty}^{\infty} a(\xi) \eta\big(U(t,\xi+X(t))| \tilde U_\eps (\xi)\big) d\xi\\
 &&\qquad=\dot X(t) Y(U(t,\cdot+X(t))) +\mathcal{B}(U(t,\cdot+X(t)))- \mathcal{G}(U(t,\cdot+X(t))).
\end{eqnarray*}
The functional  $\mathcal{G}(U)$ is non-negative  (good term) and can be split into three terms (see \eqref{ggd}):
$$
\mathcal{G}(U)=\mathcal{G}_1(U)+\mathcal{G}_2(U)+\mathcal{D}(U),
$$
where only $\mathcal{G}_1(U)$ depends on $h$. The term $\mathcal{D}(U)$ corresponds to the diffusive term (which depends on $v$ only, thanks to the transformation of the system). We are able to write this decomposition such that the functional  $\mathcal{B}(U)$  (bad terms) depends only on $v$.
This is  the main reason why we can consider a degenerate diffusion (the viscosity in $u$ only is  replaced by a diffusion in $v$ only, after transformation of the system).  The fact that the hyperbolic flux in the Navier-Stokes equations is only  linear in $h$ plays a particular role for this matter: the corresponding relative flux then vanishes. 

Because of the relative entropy structure, the quantity $\mathcal{G}(U)$ and $\mathcal{B}(U)$ are quadratic when the perturbation is small.  However, we have no uniform control on the size of $U(t,\cdot)$, therefore we will have also to carefully estimate what happens for large value of $U(t,x)$. 

The shift $X(t)$ introduces the term $\dot{X}(t) Y(U)$.  The key idea of the technique, is to take advantage of this term when $Y(U(t,\cdot))$ is not two small, by compensating all the other terms via the choice of the velocity of the shift (see (\ref{X-def})). Specifically, we ensure algebraically that the contraction holds as long as $|Y(U(t))|\geq\eps^2$. The rest of the analysis is to ensure that when $|Y(U(t))|\leq \eps^2$, the contraction still holds. 

The condition  $|Y(U(t))|\leq \eps^2$ ensures a smallness condition that we want to fully exploit. This is where the non-homogeneity of the semi-norm is crucial. In the  case where the function $a$ is constant,
$Y(U)$ is a linear functional in $U$. The smallness of $Y(U)$ gives only that a certain weighted mean value of $U$ is almost null. However, when $a$ is decreasing, $Y(U)$ becomes convex. The smallness $Y(U(t))\leq \eps^2$ implies, for this fixed time $t$ (See Lemma \ref{lemmeC2} with \eqref{d-weight} and \eqref{tail}):
\begin{equation}\label{small}
\int_{\bbr}\eps e^{-C\eps |\xi|}Q(v(t,\xi +X(t))|\tilde{v}_\eps(\xi))\,d\xi\leq C\left(\frac{\eps}{\lambda}\right)^2.
\end{equation}
This gives a control in $L^2$ for moderate values of $v$, and in  $L^1$ for big values of $v$,  in the layer region ($|\xi-X(t)|\lesssim 1/\eps$). 

The problem now looks, at first glance, as a typical problem of stability with a smallness condition.
There are, however, two major difficulties: We have some smallness only in $v$, for a very weak norm, and only localized in the layer region. More importantly, the smallness is measured with respect to the smallness of the shock. It basically says that, considering only the moderate values of $v$:
 the perturbation is not bigger than $\eps/\lambda$ (which is still very big with respect to the size of the shock  $\eps$). Actually, as we will see later, it is not possible to consider only the linearized problem: Third order terms appear in the  expansion using the smallness condition (the energy method involving the linearization would have only second order term in $\eps$).
 
In the argument, for the values of $t$ such that $|Y(U(t))|\leq\eps^2$,  we construct the shift as a solution to the ODE: $\dot X(t)=-Y(U(t,\cdot+X(t)))/\eps^4$. 
 From this point, we forget that $U=U(t,\xi)$ is a solution to \eqref{NS} and $X(t)$ is the shift. That is, we leave out $X(t)$ and the $t$-variable of $U$. Then we show that for any function $U$ satisfying $Y(U)\leq \eps^2$, we have 
 \begin{equation}\label{but}
 -\frac{1}{\eps^4}Y^2(U)+|\mathcal{B}(U)|-\mathcal{G}(U)\leq0.
 \end{equation}
This is the main Proposition \ref{prop:main} (actually, the proposition is slightly stronger to ensure the control of the shift). This implies clearly the contraction. There are several steps to prove this proposition.
\vskip0.3cm
\noindent{\it Step 1}:
 Using the smallness condition, we show that if the good diffusive term verifies 
$$
\mathcal{D}(U)\geq \frac{\eps^2}{\lambda},
$$ 
then (\ref{but}) holds true.
Note that if the values of $v$ were bounded from above  and bounded away from 0, we could control $\mathcal{B}(U)$ from (\ref{small}), since both expressions  would be  quadratic in $v-\tilde{v}_\eps$. The main difficulty in this step is to obtain the control where  the values of $v$ are small. Indeed for such small $v$, the worst term in $\mathcal{B}(U)$ behaves  like $p(v)^2=1/v^{2\gamma}$, while $Q(v|\tilde{v}_\eps)$ behaves like $1/v^{\gamma-1}$.  So we need to use a little bit of $\mathcal{D}(U)$ as a Poincar\'e type inequality  (Remember that $a\geq 1-\lambda>0$) from:
$$
\mathcal{D}(U)=\int_\bbr a |\partial_\xi (p(v)-p(\tilde v_\eps))|^2 d\xi
$$
(See \eqref{n2} from Lemma \ref{lemma3}). 
We can now restrict ourselves to  the case where both $|Y(U)|\leq \eps^2$, and $\mathcal{D}(U)\leq \eps^2/\lambda$. 

\vskip0.3cm
\noindent{\it Step 2}: To be able to perform an expansion in $\eps$ later, we want to show that it is enough to consider only values of  $v$ such that $v-\tilde{v}_\eps$ is bounded (smaller than a $\delta$ small enough, but not dependent on $\eps$ nor on $\eps/\lambda$). We need also use only the part $Y_g(v)$ of $Y(U)$ which contains only terms in $v$ (and not the terms in $h$). We do not have enough estimates on  $U$ to show that $U$ is uniformly bounded on $\bbr$. But we can show that the big values of $|v-\tilde{v}_\eps|$ (which can occur only for big values of $\xi$) do not change much the estimate  (see Section \ref{section-finale}).  It involves a careful study of the contribution of the tails ($U(\xi)$ for $|\xi|\ge 1/\eps$). This is the only part where $\mathcal{G}_1$ is used in order to control $Y_b(U)=Y(U)-Y_g(v)$, the part of $Y(U)$ which depends also on $h$ (see Lemma \ref{lemma3}).
More precisely, this step shows that it is enough to prove  that for any functions $v$ such that $|v-\tilde{v}_\eps|\leq \delta$ and $|Y_g(v)|\leq \eps^2/\lambda$, we have
$$
-\frac{1}{\eps\delta}|Y_g(v)|^2+(1+\delta)|\mathcal{B}(v)|-(1-\delta)\mathcal{G}_2(v)-(1-\delta)\mathcal{D}(v)  \leq 0.
$$
All the terms in this inequality depends on $U$ only through $v$. Therefore, with a slight abuse of notations, we will write these functions as functions of $v$. 
This corresponds to   Proposition \ref{prop:main3}. The $\delta$ terms are still needed because we lose a bit when truncating the tails, to obtain  (\ref{but}). The terms depending on $h$ are not present anymore. So it is now an estimate on  scalar functions $v$. The good term in $Y_g(v)$ involves a smaller power of $1/\eps$, since we had to control the corresponding $Y_b(U)$ with the same power of $1/\eps$. 

\vskip0.3cm
\noindent{\it Step 3}: To show  Proposition \ref{prop:main3}, we now perform a expansion in $\eps$ uniformly  in $v$ (but for a fixed $\delta$). Note that the expansion has to be performed up to the third order. Indeed, because of the function $a$, terms involving the function $a$ or the functions $a'$ do not have the same power in $\eps/\lambda$. Interestingly, the term $\mathcal{G}_2(v)$ cancels exactly the term of order $\lambda/\eps$ of $\mathcal{B}(v)$. This step shows that, thanks to some rescaling,  it is enough to prove  that for any 
$W\in L^2(0,1)$:
\begin{eqnarray*}
&&-\frac{1}{\delta}\left(\int_0^1W^2\,dy+2\int_0^1 W\,dy\right)^2+(1+\delta)\int_0^1 W^2\,dy\\
&&\qquad\qquad+\frac{2}{3}\int_0^1 W^3\,dy +\delta \int_0^1 |W|^3\,dy  -(1-\delta)\int_0^1 y(1-y)|\partial_y W|^2\,dy\leq 0.
\end{eqnarray*}
We need to show this for some  $\delta>0$ possibly very small. So it looks very similar to a nonlinear Poincar\'e inequality with constraint. The constraint (the term in $1/\delta$)  came from the term with $Y_g(v)$ through the asymptotic.
This result on $W$ is the Proposition \ref{prop:W}. 

\vskip0.3cm
\noindent{\it Step 4}: To prove Proposition \ref{prop:W} , we first reduce the problem to the minimization problem for a polynomial of two variables with a constraint.  For this we use two lemmas. 
Lemma \ref{lem-sup} provides a sharp $L^\infty$ control using the dissipation term. Lemma \ref{lem-poin} is a well known sharp Poincar\'e inequality that was already used in \cite{Kang-V-1}.
This reduces the problem to a minimization of a polynomial with variables:
$$
Z_1=\int_0^1 W(y)\,dy, \qquad Z_2=\left(\int_0^1(W-Z_1)^2\,dy\right)^{1/2}.
$$
Because of the constraint, we can reduce this minimization problem to the minimization problem of a polynomial of only one variable (see Lemma \ref{lem-alge}).  

\vskip0.1cm
It is easier to present the proofs of the propositions and lemmas in reverse. Therefore the rest of the paper is as follows. Section \ref{section_pre} is dedicated to the proofs of preliminaries. It includes some useful estimates on small shock waves, the computation of the time derivative of the  relative entropy, the construction of the function $a$, some global estimates on the relative quantities (for small or big values of $v$), and the minimization problem  for the  polynomial functional with one variable. Section \ref{section_theo} is dedicated to the proof of the main Theorem. First we give the construction of the shift, and state the main Proposition \ref{prop:main}, and then show how the Proposition implies the Theorem. To prove Proposition \ref{prop:main}, we first show the minimization problem with two variables, then the nonlinear Poincar\'e type of inequality, and continue backward up to the general situation where we have only the constraint on $Y(U)$.  

The range of $\eps$ will be reduced from one Lemma to the next, with the same notation on the restriction $\eps_0$. The restriction on $\eps/\lambda$ is more subtle. To ensure that there is no loop in the argument, we will carefully track the smallness needed on this quantity from one lemma to the next. The smallness on $\eps/\lambda$ will be denoted with $\delta$ notations. The results in the preliminaries will consider a generic smallness $\delta_*$. they can be safely replace by the same constant $\delta_*$ (taking the smallest of all). However, the constant $\delta_3$ will play a crucial role  to control the strength of the typical perturbations.  Later on, constants will be build that may blow up when $\delta_3$ is very small. It will be important to make sure that $\delta_3$ can be fixed before hand. The following restrictions on $\eps/\lambda$ are less sensitive. Therefore we will just reduced them from one lemma to the next keeping the generic notation $\delta_0$.

\section{Preliminaries}\label{section_pre}
\setcounter{equation}{0}
\subsection{Small shock waves}
In this subsection, we  present  useful properties of the 1-shock waves $(\tilde v_\eps,\tilde h_\eps)$ with small amplitude $\eps$. In the sequel, without loss of generality, we consider the 1-shock wave $(\tilde v_\eps,\tilde h_\eps)$ satisfying $\tilde v_\eps(0)=\frac{v_-+v_+}{2}$. Notice that the estimates in the following lemma also hold for $\tilde h_\eps$ since we have  $\tilde h_\eps'=\frac{p'(\tilde v_\eps)}{\sigma_\eps} \tilde v_\eps'$ and $C^{-1}\le\frac{p'(\tilde v_\eps)}{\sigma_\eps}\le C$. But, since the below estimates for $\tilde v_\eps$ are enough in our analysis, we give the estimates only for $\tilde v_\eps$.

\begin{lemma}
We fix $v_->0$ and $h_-\in \bbr$. Then there exists $\eps_0>0$, such that for any $0<\eps<\eps_0$ the following is true. 
Let $\tilde v_{\eps}$ be the 1-shock wave with amplitude $|p(v_-) -p(v_+)|=\eps$ and such that $\tilde v_\eps(0)=\frac{v_-+v_+}{2}$. Then, there exist constants $C, C_1, C_2>0$ such that
\beq\label{tail}
-C^{-1}\eps^2 e^{-C_1 \eps |\xi|} \le \tilde v_\eps'(\xi) \le -C\eps^2 e^{-C_2 \eps |\xi|},\quad \forall\xi\in\bbr.
\eeq
Therefore, as a consequence, we have
\beq\label{lower-v}
\inf_{\left[-\frac{1}{\eps},\frac{1}{\eps}\right]}| v'_{\eps}|\ge C\eps^2.
\eeq
\end{lemma}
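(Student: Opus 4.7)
The plan is to reduce the profile system \eqref{small_shock1} to a single first-order autonomous ODE for $\tilde v_\eps$, and then extract matched exponential bounds from its linearization at the two equilibria $v_\pm$.

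First I would integrate the second equation of \eqref{small_shock1} on $(-\infty,\xi)$, using $(\tilde v_\eps,\tilde h_\eps)(-\infty) = (v_-,u_-)$, to get $\sigma_\eps(\tilde h_\eps - u_-) = p(\tilde v_\eps) - p(v_-)$, and then insert this into the once-integrated first equation. This gives the scalar autonomous ODE
$$\sigma_\eps\, p'(\tilde v_\eps)\, \tilde v_\eps' \;=\; F(\tilde v_\eps), \qquad F(v) := \sigma_\eps^2(v-v_-) + p(v) - p(v_-).$$
By Rankine-Hugoniot, $F(v_\pm) = 0$, and since $p$ is strictly convex, Taylor expansion combined with $\sigma_\eps^2 = -(p(v_+)-p(v_-))/(v_+-v_-)$ yields a factorization $F(v) = (v-v_-)(v-v_+)\, q_\eps(v)$ with $q_\eps(v) = \tfrac{1}{2} p''(v_-) + O(\eps)$ uniformly on $[v_+,v_-]$. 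Because $\sigma_\eps\, p'(\tilde v_\eps)>0$ and $q_\eps$ is bounded above and below by positive constants for $\eps$ small, $\tilde v_\eps$ is strictly decreasing and confined to $(v_+,v_-)$.

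Next I would analyze the linearization at the endpoints. Setting $z(\xi) := v_- - \tilde v_\eps(\xi)$ and $y(\xi) := \tilde v_\eps(\xi) - v_+$, so that $y+z \equiv L := v_--v_+ = O(\eps)$, the ODE rewrites as $z' = y\,z\,K(\tilde v_\eps)$, where $K := q_\eps/(\sigma_\eps p'(\tilde v_\eps))$ is bounded above and below by positive constants on $[v_+,v_-]$. Direct expansion gives linearization rates $F'(v_-)/(\sigma_\eps p'(v_-)) = c_-\eps + O(\eps^2)$ and $-F'(v_+)/(\sigma_\eps p'(v_+)) = c_+\eps + O(\eps^2)$ with $c_\pm > 0$. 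Comparing the scalar ODE $z' = (L-z)\,z\,K$ with the two linear ODEs obtained by replacing $L-z$ by its extreme values on the interval and $K$ by its sup/inf then produces two-sided bounds
$$c_1 L\, e^{r_- \xi} \;\le\; z(\xi) \;\le\; c_2 L\, e^{r_- \xi}\ \ (\xi\le 0), \qquad c_1 L\, e^{-r_+ \xi} \;\le\; y(\xi) \;\le\; c_2 L\, e^{-r_+ \xi}\ \ (\xi\ge 0),$$
with $r_\pm \sim \eps$; the normalization $\tilde v_\eps(0) = (v_++v_-)/2$ forces $y(0)=z(0)=L/2$ and fixes the multiplicative constants.

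Finally, since $\tilde v_\eps' = -yzK$ with $K = O(1)$, and $y$ stays of order $L = O(\eps)$ on $\{\xi \le 0\}$ while $z$ decays like $\eps\, e^{r_-\xi}$ there (and symmetrically on $\{\xi\ge 0\}$), the product $y z$ is of order $\eps^2\, e^{-c\eps|\xi|}$ uniformly on $\bbr$, which is precisely \eqref{tail}. The pointwise lower bound \eqref{lower-v} then follows by restricting to $|\xi|\le 1/\eps$, where the exponential factor is bounded below by $e^{-C_2}$. The main technical obstacle is to promote the linearization rates at $\pm\infty$ into genuinely \emph{uniform} two-sided exponential bounds on all of $\bbr$; this is handled by the monotonicity of $\tilde v_\eps$ together with the comparison argument, which is made possible by the bilinear structure $z'=yzK$ and the scalar constraint $y+z\equiv L$.
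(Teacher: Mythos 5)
Your proposal is correct and follows essentially the same route as the paper: both reduce the profile equations to the scalar autonomous ODE $\sigma_\eps p'(\tilde v_\eps)\tilde v_\eps'=\sigma_\eps^2(\tilde v_\eps-v_\pm)+p(\tilde v_\eps)-p(v_\pm)$, factor the right-hand side as $(\tilde v_\eps-v_-)(\tilde v_\eps-v_+)$ times a quantity uniformly comparable to $\tfrac12 p''(v_-)$ via Taylor expansion, and then use the normalization at $\xi=0$ together with monotonicity to freeze one factor at size $\sim\eps$ on each half-line and integrate the resulting linear differential inequalities into two-sided exponential bounds. The only cosmetic differences are that you anchor the integrated equation at $v_-$ rather than $v_+$ and introduce the variables $y,z$ explicitly.
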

\begin{proof} We multiply the first equation of \eqref{small_shock1} by $\sigma_\eps$ and eliminate the dependence on $h_\eps$ using the second equation. After integration in $\xi$, we find:
\beq\label{s-11}
\sigma_\eps(p(\tilde v_\eps))'=\sigma_\eps^2 (\tilde v_{\eps}-v_+) + p(\tilde v_\eps)-p(v_+).
\eeq
Dividing by $\tilde v_{\eps}-v_+$ and using \eqref{RH-con} we get
\[
\frac{\sigma_\eps(p(\tilde v_\eps))'}{\tilde v_{\eps}-v_+}= -\frac{p(v_-)-p(v_+)}{v_--v_+} +\frac{p(\tilde v_\eps)-p(v_+)}{\tilde v_{\eps}-v_+}.
\]
Consider the smooth function $\varphi:\bbr^+\to\bbr$ defined by 
\[
\varphi(v):=\frac{p(v)-p(v_+)}{v-v_+}.
\]
Then, the above equality can be written as
\beq\label{temm}
\frac{\sigma_\eps(p(\tilde v_\eps))'}{\tilde v_{\eps}-v_+}= \varphi(\tilde v_\eps)-\varphi(v_-).
\eeq
To estimate the above r.h.s., we apply the Taylor theorem to the function $\varphi$ about $v_-$, so that for any $v\in\bbr^+$ with $|v-v_-|<\frac{v_-}{2}$, there exists a constant $C>0$ (depending only on $v_-$) such that
\beq\label{apply-v}
|\varphi(v)-\varphi(v_-)-\varphi'(v_-)(v-v_-)|\le C(v-v_-)^2.
\eeq
It can be shown that (see \cite{MW})
\beq\label{mono-v}
\tilde v_{\eps}'<0,\quad \mbox{and}\quad v_+<\tilde v_{\eps}< v_-.
\eeq
Therefore, for $\eps_0$ small enough:
\[
0\le v_- -\tilde v_\eps \le v_--v_+\le C\eps <\frac{v_-}{2}.
\]
Using \eqref{apply-v} with $v=\tilde v_\eps$, we have
\[
|\varphi(\tilde v_\eps)-\varphi(v_-)-\varphi'(v_-)(\tilde v_\eps -v_-)|\le C\eps(v_--\tilde v_\eps).
\]
Moreover, since 
\[
\varphi'(v_-)=\frac{p'(v_-)(v_--v_+)-\big(p(v_-)-p(v_+)\big)}{(v_--v_+)^2}=\frac{p''(v_*)}{2},\quad\mbox{for some } v_*\in (v_+,v_-),
\]
we take $\eps_0$ small enough such that $p''(v_-)\ge \varphi'(v_-)\geq p''(v_-)/2 >0$.\\
Thus, for $\eps_0$ small enough, we have
\[
2p''(v_-) (\tilde v_\eps -v_-) \le \varphi(\tilde v_\eps) -\varphi(v_-) \le \frac{p''(v_-)}{8} (\tilde v_\eps -v_-).
\]
Then, it follows from \eqref{temm} that
\[
2p''(v_-) (\tilde v_\eps -v_-)(\tilde v_\eps -v_+) \le \sigma_\eps(p(\tilde v_\eps))' \le \frac{p''(v_-)}{8} (\tilde v_\eps -v_-)(\tilde v_\eps -v_+). 
\]
Since
\beq\label{inst-0}
 -\sqrt{-p'(v_-/2)}\le \sigma_\eps \le -\sqrt{-p'(v_-)}\quad\mbox{and} \quad p'(v_-/2)\le p'(\tilde v_\eps) \le p'(v_-)<0,
\eeq
the quantity $\sigma_\eps p'(\tilde v_\eps)$ is bounded from below and above  uniformly in $\eps$.\\
Therefore
\beq\label{gein}
C^{-1}(\tilde v_\eps -v_-)(\tilde v_\eps -v_+) \le \tilde v_\eps' \le C(\tilde v_\eps -v_-)(\tilde v_\eps -v_+). 
\eeq
To prove the estimate \eqref{tail}, we first observe that $\tilde v_\eps'<0$ and $\tilde v_\eps(0)=\frac{v_-+v_+}{2}$ imply
\begin{align}
\begin{aligned}\label{bothcase}
&\xi\le0 ~\Rightarrow~v_--v_+ \ge \tilde v_{\eps}(\xi)-v_+\ge \tilde v_{\eps}(0)-v_+=\frac{v_--v_+}{2},\\
&\xi\ge0 ~\Rightarrow~v_--v_+ \ge v_--\tilde v_{\eps}(\xi)\ge v_-- \tilde v_{\eps}(0)=\frac{v_--v_+}{2}.
\end{aligned}
\end{align} 
Then, using \eqref{gein} and \eqref{bothcase} with $|v_--v_+|\leq C\eps$, we have 
\begin{align*}
\begin{aligned}
&\xi\le0 ~\Rightarrow~-C^{-1}\eps (v_- - \tilde v_\eps)\le \tilde v_{\eps}' \le -C\eps (v_- - \tilde v_\eps),\\
&\xi\ge0 ~\Rightarrow~-C^{-1}\eps (\tilde v_\eps -v_+)\le \tilde v_{\eps}' \le -C\eps (\tilde v_\eps -v_+).
\end{aligned}
\end{align*} 
Thus,
\begin{align*}
\begin{aligned}
&\xi\le0 ~\Rightarrow~-C^{-1}\eps (v_- - \tilde v_\eps)\ge (v_- -\tilde v_{\eps})' \ge -C\eps (v_- - \tilde v_\eps),\\
&\xi\ge0 ~\Rightarrow~-C^{-1}\eps (\tilde v_\eps -v_+) \le (\tilde v_{\eps}-v_+)' \le -C\eps (\tilde v_\eps -v_+).
\end{aligned}
\end{align*} 
These together with $\tilde v_\eps(0)=\frac{v_-+v_+}{2}$ imply
\begin{align*}
\begin{aligned}
&\xi\le0 ~\Rightarrow~C^{-1}\eps e^{-C_2\eps|\xi|}\le v_- -\tilde v_{\eps} \le C\eps e^{-C_1\eps|\xi|},\\
&\xi\ge0 ~\Rightarrow~C^{-1}\eps e^{-C_2\eps\xi}\le \tilde v_{\eps}-v_+ \le C\eps e^{-C_1\eps\xi}.
\end{aligned}
\end{align*} 
Finally, applying the above estimate together with $|\tilde v_{\eps}-v_\pm|\le C\eps$ to \eqref{gein}, gives
 \eqref{tail}.\\
Estimate \eqref{lower-v} follows directly  from the upper bound on $ \tilde v_\eps'(\xi)$ in \eqref{tail}.
\end{proof}
We finish this subsection with an estimate based on the inverse of the pressure function.
\begin{lemma}\label{lemma_pression}
Let us fixed $p_->0$. Then, there exists $\eps_0>0$ and $C>0$ such that for any $p_+, p>0$ such that
$0<\eps= p_+-p_-\leq \eps_0$, $p_-\leq p\leq p_+$, and $v, v_-,v_+$ such that 
$p(v)=p, p(v_\pm)=p_\pm$, we have
$$
\left|\frac{v-v_-}{p-p_-}+\frac{v-v_+}{p_+-p}+\frac{1}{2}\frac{p''(v_-)}{p'(v_-)^2}(v_--v_+)\right|\leq C\eps^2.
$$
\end{lemma}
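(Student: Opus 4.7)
The plan is to parametrize the variables by the smooth inverse pressure $V := p^{-1}$, so that $v = V(p)$ and $v_\pm = V(p_\pm)$. Since $p_->0$ is fixed and $p'$ does not vanish near $v_-$, the inverse function theorem furnishes a smooth $V$ on a fixed neighborhood of $p_-$; for $\eps_0$ small enough all three values $p_-, p, p_+$ lie in this neighborhood, and $V', V'', V'''$ are uniformly bounded there by constants depending only on $p_-$.

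First I would apply Taylor's theorem to $V$ at the two endpoints $p_-$ and $p_+$, to second order with cubic remainder, obtaining
\[
\frac{v-v_-}{p-p_-} = V'(p_-) + \tfrac{1}{2}V''(p_-)(p-p_-) + O(\eps^2),
\]
\[
\frac{v-v_+}{p_+-p} = -V'(p_+) + \tfrac{1}{2}V''(p_+)(p_+-p) + O(\eps^2),
\]
where the remainders are uniform in $p\in[p_-,p_+]$ (the fractions at the endpoints being interpreted as the natural limits). Summing and using the first-order expansions $V'(p_+) = V'(p_-) + V''(p_-)\eps + O(\eps^2)$ and $V''(p_+) = V''(p_-) + O(\eps)$, together with the identity $(p-p_-)+(p_+-p)=\eps$, the sum collapses to
\[
\frac{v-v_-}{p-p_-} + \frac{v-v_+}{p_+-p} = -\tfrac{1}{2}V''(p_-)\,\eps + O(\eps^2).
\]

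To convert this into the form stated in the lemma I would use two ingredients. Differentiating $V(p(v))=v$ twice gives the chain-rule identity $V''(p_-) = -p''(v_-)/p'(v_-)^3$. Expanding $p$ at $v_-$ and using $p(v_+)=p_-+\eps$ gives $\eps = -p'(v_-)(v_--v_+) + O(\eps^2)$. Substituting these into the previous display yields
\[
-\tfrac{1}{2}V''(p_-)\,\eps = -\tfrac{1}{2}\,\frac{p''(v_-)}{p'(v_-)^2}(v_--v_+) + O(\eps^2),
\]
so transposing the $p''(v_-)/p'(v_-)^2$ term to the left-hand side produces the desired bound.

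The calculation is essentially routine Taylor calculus; the only point requiring care is bookkeeping, namely tracking the minus signs coming from $p'(v_-)<0$ and confirming that each error term is genuinely $O(\eps^2)$ uniformly in $p\in[p_-,p_+]$. The latter follows immediately from the uniform boundedness of $V'''$ and $p'''$ on the fixed neighborhood of $v_-$, which also makes the constant $C$ explicit in terms of $p_-$ alone.
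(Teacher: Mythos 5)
Your proposal is correct and follows essentially the same route as the paper: both arguments Taylor-expand the inverse pressure $V=p^{-1}$ at $p_-$ and $p_+$ to second order with cubic remainder, divide by $p-p_-$ and $p_+-p$, sum, and convert $V''(p_-)$ into $-p''(v_-)/p'(v_-)^{3}$ via the chain-rule identity before trading $\eps$ for $-p'(v_-)(v_--v_+)$ up to $O(\eps^2)$. The paper merely organizes the same cancellations slightly differently (its estimates \eqref{gamma} and \eqref{delta}), so there is no substantive difference and no gap.
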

\begin{proof}
Consider the function $v(p)=p^{-1/\gamma}$. Then, using a Taylor expansion at $p_-$, we find that there exists $\eps_0$ such that for any $|p-p_-|\leq \eps_0$ and $|p-p_+|\leq \eps_0$ we have
\begin{eqnarray}
\label{alpha}
\left| {v-v_-}-\frac{dv}{dp}(p_-)(p-p_-)-\frac{1}{2}\frac{d^2v}{dp^2}(p_-)(p-p_-)^2\right|\leq C |p-p_-|^3,\\
\label{beta}
\left| {v-v_+}-\frac{dv}{dp}(p_+)(p-p_+)-\frac{1}{2}\frac{d^2v}{dp^2}(p_+)(p-p_+)^2\right|\leq C |p-p_+|^3.
\end{eqnarray} 
Since 
$$
\frac{d^2v}{dp^2}=\frac{d}{dp}\left(\frac{1}{p'(v)}\right)=-\frac{p''(v)}{p'(v)^2}\frac{dv}{dp},
$$
we get
\begin{align}
\begin{aligned}\label{gamma}
&\left|\frac{1}{2}\frac{p''(v_-)}{p'(v_-)^2}(v_--v_+)+\frac{1}{2}\frac{d^2v}{dp^2}(p_-)(p_--p_+)\right|\\
&\qquad \le\frac{p''(v_-)}{2p'(v_-)^2}\Big|v_+ -v_- - \frac{dv}{dp}(p_-)(p_+ -p_-) \Big|\leq C\eps^2.
\end{aligned}
\end{align} 
Since
\begin{equation}\label{delta}
\begin{array}{ll}
&\displaystyle{\left| \frac{1}{2}\frac{d^2v}{dp^2}(p_+)(p-p_+) -\frac{1}{2}\frac{d^2v}{dp^2}(p_-)(p-p_-) +\frac{1}{2}\frac{d^2v}{dp^2}(p_-)(p_+-p_-)\right|} \\[0.3cm]
&\displaystyle{\qquad\qquad=\frac{1}{2} \left| \left(\frac{d^2v}{dp^2}(p_+)-\frac{d^2v}{dp^2}(p_-)\right)(p-p_+)\right|\leq C\eps^2.}
\end{array}
\end{equation}
dividing \eqref{alpha} by $p-p_-$, \eqref{beta} by $p_+-p$, and adding both terms together with the terms estimated in \eqref{gamma} and \eqref{delta}, we obtain
\begin{eqnarray*}
&&\left|\frac{v-v_-}{p-p_-}+\frac{v-v_+}{p_+-p} +\frac{1}{2}\frac{p''(v_-)}{p'(v_-)^2}(v_--v_+)\right.\\
&&\qquad \qquad\qquad \left.-\left(\frac{dv}{dp}(p_-)-\frac{dv}{dp}(p_+)-\frac{d^2v}{dp^2}(p_-)(p_--p_+)\right)\right|\leq C\eps^2.
\end{eqnarray*}
This  gives the result, since the second line term is itself of order $\eps^2$.
\end{proof}

\subsection{Relative entropy method}
Our analysis is  based on the relative entropy. The method  is purely nonlinear, and allows to handle rough and large perturbations. The relative entropy method was first introduced by Dafermos \cite{Dafermos1} and Diperna \cite{DiPerna} to prove the $L^2$ stability and uniqueness of Lipschitz solutions to the hyperbolic conservation laws endowed with a convex entropy.

To use the relative entropy method, we rewrite \eqref{NS} into the following general system of viscous conservation laws:
\beq\label{system-0}
\partial_t U +\partial_\xi A(U)= { -\partial_{\xi\xi}p(v) \choose 0},
\eeq
where 
\[
U:={v \choose h},\quad A(U):={-\sigma_\eps v-h \choose -\sigma_\eps h+p(v)}.
\]
The system \eqref{system-0} has a convex entropy $\eta(U):=\frac{h^2}{2}+Q(v)$, where $Q(v)=\frac{v^{-\gamma+1}}{\gamma-1}$, i.e., $Q'(v)=-p(v)$.\\
Using the derivative of the entropy as 
\beq\label{nablae}
\nabla\eta(U)={-p(v)\choose h},
\eeq
the above system \eqref{system-0} can be rewritten as
\beq\label{system}
\partial_t U +\partial_\xi A(U)= \partial_\xi\Big(M\partial_\xi\nabla\eta(U) \Big),
\eeq
where $M={1\quad 0 \choose 0\quad 0}$, and \eqref{small_shock1} can be rewritten as 
\beq\label{re_shock}
\partial_\xi A(\tilde U_\eps)= \partial_\xi\Big(M\partial_\xi\nabla\eta(\tilde U_\eps) \Big).
\eeq
Consider the relative entropy function defined by
\[
\eta(U|V)=\eta(U)-\eta(V) -\nabla\eta(V)\cdot (U-V),
\]
and the relative flux defined by
\[
A(U|V)=A(U)-A(V) -\nabla A(V) (U-V).
\] 
Let $G(\cdot;\cdot)$ be the flux of the relative entropy defined by
\[
G(U;V) = G(U)-G(V) -\nabla \eta(V) (A(U)-A(V)),
\]
where $G$ is the entropy flux of $\eta$, i.e., $\partial_{i}  G (U) = \sum_{k=1}^{2}\partial_{k} \eta(U) \partial_{i}  A_{k} (U),\quad 1\le i\le 2$.\\
Then, for our system \eqref{system-0}, we have
\begin{align}
\begin{aligned}\label{relative_e}
&\eta(U|\tilde U_\eps)=\frac{|h-\tilde h_\eps |^2}{2} + Q(v|\tilde v_\eps),\\
& A(U|\tilde U_\eps)={0 \choose p(v|\tilde v_\eps)},\\
&G(U;\tilde U_\eps)=(p(v)-p(\tilde v_\eps)) (h-\tilde h_\eps)-\sigma_\eps \eta(U|\tilde U_\eps),
\end{aligned}
\end{align}
Note that the relative pressure is defined as
\begin{equation}\label{pressure-relative}
p(v|w)=p(v)-p(w)-p'(w)(v-w).
\end{equation}

We consider a weighted relative entropy between the solution $U$ of \eqref{system} and the viscous shock $\tilde U_\eps:={\tilde v_\eps \choose \tilde h_\eps}$ in \eqref{small_shock1} up to a shift $X(t)$ :
\[
a(\xi)\eta\big(U(t,\xi+X(t))|\tilde U_\eps(\xi)\big),
\]
where $a$ is a smooth weight function.\\
The following Lemma provides a quadratic structure on $\frac{d}{dt}\int_{\bbr} a(\xi)\eta\big(U(t,\xi+X(t))|\tilde U_\eps(\xi)\big) d\xi$. We introduce the following notation: for any function $f : \bbr^+\times \bbr\to \bbr$ and the shift $X(t)$, 
\[
f^{\pm X}(t, \xi):=f(t,\xi\pm X(t)).
\]
We also introduce the functional space
\[
\mathcal{H}:=\{(v,h)\in\bbr^+\times\bbr~|~  v^{-1}, v,h \in L^{\infty}(\bbr),~ \partial_\xi \big(p(v)-p(\tilde v_\eps) \big)\in L^2( \bbr) \},
\]
on which the below functionals $Y, \mathcal{B}, \mathcal{G}$ in \eqref{badgood} are well-defined. \\

In this paper we assume that the solution lies in $C(0,T; \mathcal{H})$ for any $T>0$.

\begin{remark}\label{rem-sol}
The recent result of Constantin-Drivas-Nguyen-Pasqualotto \cite{CDNP} provides the global existence and uniqueness of smooth solutions to \eqref{main} for the case of $\alpha>1/2$ and periodic boundary condition. Note that the system \eqref{main} is equivalent to the one in the Eulerian coordinates for smooth solutions. More precisely, it follows from \cite[Theorem 1.5 and Remark 1.6]{CDNP} that \eqref{main} on the torus $\bbt$ admits a unique smooth solution $v, u$ such that for any $T>0$, $0<C(T)^{-1}\le v \le C(T)$, $\partial_x v\in L^\infty(\bbt)$ and $u\in C(0,T; H^k(\bbt))$ in the case of $\gamma=\alpha>1$, as long as the initial datum satisfies $v_0, u_0\in H^k$ and $\partial_x u_0\le 1$ for $k\ge4$. As a consequence, since $h=u+p'(v)\partial_x v$, this result guarantees the existence of solutions $v, h$ in $C(0,T; \mathcal{H})$ on the torus.
For an extension of their result for the case where solutions connecting two different states on the whole space as a perturbation of a shock, we leave it as a future work. 
Let us refer to the previous result \cite{Haspot} of Haspot (see also \cite{MV_sima}) for existence of solutions connecting two different states on the whole space in the case of $\alpha\le 1$. Note however that our result needs the case of $\alpha>1$.

\end{remark}

\begin{lemma}\label{lem-rel}
Let $a:\bbr\to\bbr^+$ be a smooth bounded function such that $a', a''$ are integrable. Let $X$ be a differentiable function, and $\tilde U_\eps:={\tilde v_\eps \choose \tilde h_\eps}$ be the viscous shock in \eqref{small_shock1}. For any solution $U\in\mathcal{H}$ to \eqref{system}, we have 
\begin{align}
\begin{aligned}\label{ineq-1}
\frac{d}{dt}\int_{\bbr} a(\xi)\eta(U^X(t,\xi)|\tilde U_\eps(\xi)) d\xi =\dot X(t) Y(U^X) +\mathcal{B}(U^X)- \mathcal{G}(U^X),
\end{aligned}
\end{align}
where
\begin{align}
\begin{aligned}\label{badgood}
&Y(U):= -\int_\bbr a'\eta(U|\tilde U_\eps) d\xi +\int_\bbr a\left(\partial_\xi\nabla\eta(\tilde U_\eps)\right) \cdot  (U-\tilde U_\eps) d\xi,\\
&\mathcal{B}(U):= \frac{1}{2\sigma_\eps} \int_\bbr a' |p(v)-p(\tilde v_\eps)|^2d\xi+ \sigma_\eps\int_\bbr a \partial_\xi \tilde v_\eps p(v|\tilde v_\eps) d\xi + \frac{1}{2}\int_\bbr a'' |p(v)-p(\tilde v_\eps)|^2 d\xi,\\
&\mathcal{G}(U):=\frac{\sigma_\eps}{2}\int_\bbr a'\Big(h-\tilde h_\eps -\frac{p(v)-p(\tilde v_\eps)}{\sigma_\eps}\Big)^2 d\xi +\sigma_\eps  \int_\bbr  a' Q(v|\tilde v_\eps) d\xi \\
&\qquad\qquad\qquad\qquad\qquad\qquad\qquad\qquad\qquad\qquad+\int_\bbr a |\partial_\xi (p(v)-p(\tilde v_\eps))|^2 d\xi.
\end{aligned}
\end{align}
\end{lemma}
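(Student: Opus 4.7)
The plan is to compute $\partial_t\eta(U^X|\tilde U_\eps)$ pointwise via the chain rule, substitute the PDE \eqref{system} for $U$, integrate against the weight $a(\xi)$, and reorganize the result into the three functionals. Since $\tilde U_\eps$ is stationary in the shifted frame,
$$\partial_t\eta(U^X|\tilde U_\eps) = \bigl(\nabla\eta(U^X)-\nabla\eta(\tilde U_\eps)\bigr)\cdot\bigl[(\partial_t U)^X + \dot X\,\partial_\xi U^X\bigr],$$
and replacing $(\partial_t U)^X$ by $-\partial_\xi A(U^X)+\partial_\xi(M\partial_\xi\nabla\eta(U^X))$ splits the computation into a shift piece, a hyperbolic flux piece, and a parabolic piece. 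For the shift piece I would integrate by parts once in $\xi$, then apply the decomposition $\eta(U^X) = \eta(U^X|\tilde U_\eps) + \eta(\tilde U_\eps) + \nabla\eta(\tilde U_\eps)\cdot(U^X-\tilde U_\eps)$ together with the pointwise identity $\partial_\xi(\eta(\tilde U_\eps) - \nabla\eta(\tilde U_\eps)\cdot\tilde U_\eps) = -\partial_\xi\nabla\eta(\tilde U_\eps)\cdot\tilde U_\eps$ to cancel all purely $\tilde U_\eps$-dependent contributions; the remainder is precisely $\dot X\,Y(U^X)$.

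For the hyperbolic piece I would invoke the standard relative-flux identity
$$(\nabla\eta(U)-\nabla\eta(\tilde U))\cdot\partial_\xi A(U) = \partial_\xi G(U;\tilde U) - \partial_\xi\nabla\eta(\tilde U)\cdot(A(U)-A(\tilde U)),$$
split $A(U^X)-A(\tilde U_\eps) = \nabla A(\tilde U_\eps)(U^X-\tilde U_\eps) + A(U^X|\tilde U_\eps)$, and use the shock profile equation \eqref{re_shock} to absorb the resulting $\partial_\xi A(\tilde U_\eps)$ terms into the parabolic contribution. Because $A$ is linear in $h$, the relative flux collapses to $A(U|\tilde U_\eps)=(0,\,p(v|\tilde v_\eps))^{\top}$; this produces the $\sigma_\eps\int a\,\partial_\xi\tilde v_\eps\,p(v|\tilde v_\eps)\,d\xi$ summand of $\mathcal{B}$ and is the structural reason $\mathcal{B}$ depends only on $v$. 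For the parabolic piece, one integration by parts in $\xi$ combined with $\nabla\eta=(-p(v),h)^{\top}$ and $M=\operatorname{diag}(1,0)$ (so that only the $v$-component survives) produces the dissipation $-\int a\,|\partial_\xi(p(v)-p(\tilde v_\eps))|^2\,d\xi$ together with a boundary-like term $-\int a'(p(v)-p(\tilde v_\eps))\,\partial_\xi(p(v)-p(\tilde v_\eps))\,d\xi$, which a further integration by parts converts into the $\frac{1}{2}\int a''|p(v)-p(\tilde v_\eps)|^2 d\xi$ summand of $\mathcal{B}$.

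The main obstacle is the final algebraic rearrangement of the remaining $a'$-weighted residue into the square-plus-$Q$ form of $\mathcal{G}$. After the above reductions, the leftover terms are a combination of $-\sigma_\eps a'\eta(U^X|\tilde U_\eps)$ (coming from the $-\sigma_\eps\eta$ part of $G(U;\tilde U_\eps)$) and $a'(p(v)-p(\tilde v_\eps))(h-\tilde h_\eps)$ (coming from the remaining part of $G$ together with the $\nabla A(\tilde U_\eps)(U^X-\tilde U_\eps)$ manipulation, after using the Rankine-Hugoniot identity $-\sigma_\eps\tilde h_\eps'+p(\tilde v_\eps)'=0$ from \eqref{small_shock1} to cancel all purely $\tilde U_\eps$-dependent residues). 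Using $\eta(U|\tilde U_\eps)=\tfrac12|h-\tilde h_\eps|^2+Q(v|\tilde v_\eps)$ and the algebraic identity
$$\frac{\sigma_\eps}{2}(h-\tilde h_\eps)^2 - (p(v)-p(\tilde v_\eps))(h-\tilde h_\eps) = \frac{\sigma_\eps}{2}\Bigl(h-\tilde h_\eps-\frac{p(v)-p(\tilde v_\eps)}{\sigma_\eps}\Bigr)^2 - \frac{1}{2\sigma_\eps}(p(v)-p(\tilde v_\eps))^2,$$
this residue splits exactly into the first (square) and second ($Q$) summands of $\mathcal{G}$ plus the remaining $\frac{1}{2\sigma_\eps}\int a'|p(v)-p(\tilde v_\eps)|^2 d\xi$ summand of $\mathcal{B}$. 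The delicate part is verifying that every Rankine-Hugoniot relation in \eqref{small_shock1} is used exactly once so that no $\tilde U_\eps$-only residue survives, and that the sign conventions (in particular $\sigma_\eps<0$) are tracked consistently through the square completion.
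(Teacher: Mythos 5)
Your proposal is correct and follows essentially the same route as the paper's proof: the relative entropy computation, the use of the relative flux $A(U|\tilde U_\eps)=(0,p(v|\tilde v_\eps))^{\top}$ together with the profile equation \eqref{re_shock} to produce the cancellation yielding $\sigma_\eps\int a\,\partial_\xi\tilde v_\eps\,p(v|\tilde v_\eps)\,d\xi$, the double integration by parts of the diffusion giving the $a''$ term, and the same square-completion identity for $\mathcal{G}$. The only slip is a sign in your displayed relative-flux identity, which should read $(\nabla\eta(U)-\nabla\eta(\tilde U_\eps))\cdot\partial_\xi A(U)=\partial_\xi G(U;\tilde U_\eps)+\partial_\xi\nabla\eta(\tilde U_\eps)\cdot(A(U)-A(\tilde U_\eps))$; with the corrected sign your subsequent conclusions are exactly the paper's.
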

\begin{proof}
To derive the desired structure, we  use here a change of variable $\xi\mapsto \xi-X(t)$ as
\beq\label{move-X}
\int_{\bbr} a(\xi)\eta(U^X(t,\xi)|\tilde U_\eps(\xi)) d\xi=\int_{\bbr} a^{-X}(\xi)\eta(U(t,\xi)|\tilde U_\eps^{-X}(\xi)) d\xi.
\eeq
Then, by a straightforward computation together with \cite[Lemma 4]{Vasseur_Book} and the identity $G(U;V)=G(U|V)-\nabla\eta(V)A(U|V)$, we have
\begin{align*}
\begin{aligned}
&\frac{d}{dt}\int_{\bbr} a^{-X}(\xi)\eta(U(t,\xi)|\tilde U_\eps^{-X}(\xi)) d\xi\\
&=-\dot X \int_{\bbr} \!a'^{-X} \eta(U|\tilde U_\eps^{-X} ) d\xi +\int_\bbr \!\!a^{-X}\bigg[\Big(\nabla\eta(U)-\nabla\eta(\tilde U_\eps^{-X})\Big)\!\Big(\!\!\!-\partial_\xi A(U)+ \partial_\xi\Big(M\partial_\xi\nabla\eta(U) \Big) \Big)\\
&\qquad -\nabla^2\eta(\tilde U_\eps^{-X}) (U-\tilde U_\eps^{-X}) \Big(-\dot X \partial_\xi\tilde U_\eps^{-X} -\partial_\xi A(\tilde U_\eps^{-X})+ \partial_\xi\Big(M\partial_\xi\nabla\eta(\tilde U_\eps^{-X}) \Big)\Big)  \bigg] d\xi\\
&\quad =\dot X \Big( -\int_\bbr a'^{-X}\eta(U|\tilde U_\eps^{-X}) d\xi +\int_\bbr a^{-X}\left(\partial_\xi\nabla\eta(\tilde U_\eps^{-X})\right) \cdot (U-\tilde U_\eps^{-X})  \Big) +I_1+I_2+I_3+I_4,
\end{aligned}
\end{align*}
where 
\begin{align*}
\begin{aligned}
&I_1:=-\int_\bbr a^{-X} \partial_\xi G(U;\tilde U_\eps^{-X}) d\xi,\\
&I_2:=- \int_\bbr a^{-X} \partial_\xi \nabla\eta(\tilde U_\eps^{-X}) A(U|\tilde U_\eps^{-X}) d\xi,\\
&I_3:=\int_\bbr a^{-X} \Big( \nabla\eta(U)-\nabla\eta(\tilde U_\eps^{-X})\Big) \partial_\xi \Big(M \partial_\xi (\nabla\eta(U)-\nabla\eta(\tilde U_\eps^{-X})) \Big)  d\xi \\
&I_4:=\int_\bbr a^{-X}(\nabla\eta)(U|\tilde U_\eps^{-X})\partial_\xi \Big(M \partial_\xi \nabla\eta(\tilde U_\eps^{-X}) \Big)  d\xi.
\end{aligned}
\end{align*}
Using \eqref{relative_e} and \eqref{nablae}, we have
\begin{align*}
\begin{aligned}
I_1&=\int_\bbr a'^{-X} G(U;\tilde U_\eps^{-X}) d\xi = \int_\bbr a'^{-X} \Big(\big(p(v)-p(\tilde v_\eps^{-X})\big) (h-\tilde h_\eps^{-X})  -\sigma_\eps \eta(U|\tilde U_\eps^{-X})\Big)  d\xi,\\
I_2&=-\int_\bbr a^{-X} \partial_\xi \tilde h_\eps^{-X} p(v|\tilde v_\eps^{-X}) d\xi,
\end{aligned}
\end{align*}
and 
\begin{align*}
\begin{aligned}
I_3&=\int_\bbr a^{-X} \big(p(v)-p(\tilde v_\eps^{-X})\big)\partial_{\xi\xi}\big(p(v)-p(\tilde v_\eps^{-X})\big) d\xi \\
&=-\int_\bbr a^{-X} |\partial_\xi (p(v)-p(\tilde v_\eps^{-X}))|^2 d\xi + \frac{1}{2}\int_\bbr a''^{-X} |p(v)-p(\tilde v_\eps^{-X})|^2 d\xi.
\end{aligned}
\end{align*}
Since it follows from \eqref{re_shock} and \eqref{nablae} that
\begin{align*}
\begin{aligned}
I_4=\int_\bbr a^{-X}(\nabla\eta)(U|\tilde U_\eps^{-X})\partial_\xi A(\tilde U_\eps^{-X})  d\xi =\int_\bbr a^{-X} p(v|\tilde v_\eps^{-X}) \Big(\partial_\xi \tilde h_\eps^{-X} + \sigma_\eps \partial_\xi \tilde v_\eps^{-X} \Big) d\xi,
\end{aligned}
\end{align*}
we have some cancellation
\[
I_2+I_4=\sigma_\eps\int_\bbr a^{-X} \partial_\xi \tilde v_\eps^{-X} p(v|\tilde v_\eps^{-X}) d\xi.
\]
Therefore, we have
\begin{align*}
\begin{aligned}
&\frac{d}{dt}\int_{\bbr} a^{-X}\eta(U|\tilde U_\eps^{-X}) d\xi\\
&\quad =\dot X \Big( -\int_\bbr a'^{-X}\eta(U|\tilde U_\eps^{-X}) d\xi +\int_\bbr a^{-X}\partial_\xi\nabla\eta(\tilde U_\eps^{-X}) (U-\tilde U_\eps^{-X}) d\xi  \Big)\\
&\qquad + \int_\bbr a'^{-X} \Big(\big(p(v)-p(\tilde v_\eps^{-X})\big) (h-\tilde h_\eps^{-X})  -\sigma_\eps \eta(U|\tilde U_\eps^{-X})\Big)  d\xi \\
&\qquad + \sigma_\eps\int_\bbr a^{-X} \partial_\xi \tilde v_\eps^{-X} p(v|\tilde v_\eps^{-X}) d\xi + \frac{1}{2}\int_\bbr a''^{-X} |p(v)-p(\tilde v_\eps^{-X})|^2 d\xi\\
&\qquad\qquad\qquad-\int_\bbr a^{-X} |\partial_\xi (p(v)-p(\tilde v_\eps^{-X}))|^2 d\xi.
\end{aligned}
\end{align*}
Again, we use a change of variable $\xi\mapsto \xi+X(t)$ to have
\begin{align*}
\begin{aligned}
&\frac{d}{dt}\int_{\bbr} a\eta(U^X|\tilde U_\eps) d\xi\\
&\quad =\dot X \Big( -\int_\bbr a'\eta(U^X|\tilde U_\eps) d\xi +\int_\bbr a\partial_\xi\nabla\eta(\tilde U_\eps) (U^X-\tilde U_\eps) d\xi  \Big)\\
&\qquad + \int_\bbr a' \Big( \underbrace{\big(p(v^X)-p(\tilde v_\eps)\big) (h^X-\tilde h_\eps)  -\sigma_\eps \eta(U^X|\tilde U_\eps)}_{=:I}\Big)  d\xi \\
&\qquad + \sigma_\eps\int_\bbr a \partial_\xi \tilde v_\eps p(v^X|\tilde v_\eps) d\xi + \frac{1}{2}\int_\bbr a'' |p(v^X)-p(\tilde v_\eps)|^2 d\xi
-\int_\bbr a |\partial_\xi (p(v^X)-p(\tilde v_\eps))|^2 d\xi.
\end{aligned}
\end{align*}

To extract a quadratic term on $p(v^X)-p(\tilde v_\eps)$ from the above hyperbolic part, we rewrite $I$ as
\begin{align*}
\begin{aligned}
I&=(p(v^X)-p(\tilde v_\eps)) (h^X-\tilde h_\eps)-\sigma_\eps\frac{|h^X-\tilde h_\eps|^2}{2} -\sigma_\eps Q(v^X|\tilde v_\eps)\\
&=\frac{|p(v^X)-p(\tilde v_\eps)|^2}{2\sigma_\eps} -\frac{\sigma_\eps}{2} \Big(h^X-\tilde h_\eps -\frac{p(v^X)-p(\tilde v_\eps)}{\sigma_\eps}\Big)^2 -\sigma_\eps Q(v^X|\tilde v_\eps).
\end{aligned}
\end{align*}
Hence we have the desired representation \eqref{ineq-1}-\eqref{badgood}.
\end{proof}

\begin{remark}\label{rem:0}
Notice that since $\sigma_\eps, a' <0$, the three terms in  $\mathcal{G}$ are non-negative. Therefore, $\mathcal{G}$ consists of good terms, while $\mathcal{B}$ consists of bad terms. 
\end{remark}

\subsection{Construction of the weight function}
We define the weight function $a$ by
\beq\label{weight-a}
a(\xi)=1-\lambda \frac{p(\tilde v_\eps(\xi))-p(v_-)}{[p]},
\eeq
where $[p]:=p(v_+)-p(v_-)$.
We briefly present some useful properties on the weight $a$.\\
First of all, the weight function $a$ is positive and decreasing, and satisfies $1-\lambda\le a\le 1$.\\
Since $[p]=\eps$, $p'(v_-/2)\le p'(\tilde v_\eps)\le p'(v_-)$ and
\beq\label{der-a}
a'=-\lambda \frac{\partial_\xi p(\tilde v_\eps)}{[p]},
\eeq
we have 
\beq\label{d-weight}
|a'|\sim \frac{\lambda}{\eps}|\tilde v_\eps'|.
\eeq
For $a''=-\lambda \frac{\partial_{\xi\xi} p(\tilde v_\eps)}{[p]}$, we use the following relation from \eqref{small_shock1}:
\beq\label{dd-a}
\partial_{\xi\xi} p(\tilde v_\eps)=\sigma_\eps \partial_{\xi} \tilde v_\eps +\partial_{\xi}\tilde h_\eps =\Big(\frac{\sigma_\eps^2}{p'(\tilde v_\eps)}+1\Big)\frac{\partial_\xi p(\tilde v_\eps)}{\sigma_\eps}.
\eeq
Notice that $|v_--v_+|=C\eps$ and \eqref{RH-con} together with the Taylor theorem imply
\beq\label{shock_speed}
\sigma_\eps= -\sqrt{-p'(v_-)}+\mathcal{O}(\eps).
\eeq
Moreover, since $p'(\tilde v_\eps)^{-1}= p'(v_-)^{-1} +\mathcal{O}(\eps)$, we have 
\beq\label{ddp}
|\partial_{\xi\xi} p(\tilde v_\eps)|\le C\eps |\partial_\xi p(\tilde v_\eps)|.
\eeq
Thus,
\[
|a''|\lesssim \lambda |\tilde v_\eps'|.
\]
which together with \eqref{d-weight} implies 
\beq\label{dd-weight}
|a''|\lesssim \eps |a'|.
\eeq

\begin{remark}
The definition \eqref{weight-a} can be more generally written by
\beq\label{weight}
a(\xi)=1-\lambda \frac{\int_{-\infty}^{\xi} |\partial_s\nabla\eta(\tilde U_\eps(s))|ds}{\int_{-\infty}^{\infty} |\partial_s\nabla\eta(\tilde U_\eps(s))|ds}.
\eeq
Indeed, since it follows from \eqref{small_shock1} that $p( \tilde v_\eps)'=\sigma_\eps \tilde h_{\eps}'$, we find that
\[
 |\partial_\xi\nabla\eta(\tilde U_\eps(\xi))|=\Big|\partial_\xi{-p(\tilde v_\eps(\xi))\choose \tilde h_\eps(\xi)}\Big|=|\partial_\xi p(\tilde v_\eps(\xi))| |(-1, \sigma_\eps^{-1}) |.
\]
Moreover, since $\partial_\xi p(\tilde v_\eps(\xi))>0$,
\beq\label{w-real}
 |\partial_\xi\nabla\eta(\tilde U_\eps(\xi))|=\partial_\xi p(\tilde v_\eps(\xi)) |(-1, \sigma_\eps^{-1}) |,
\eeq
which implies \eqref{weight-a}.
\end{remark}

\subsection{Global and local estimates on the relative quantities}
We here present useful inequalities on $Q$ and $p$ that are crucially for the proof of Theorem \ref{thm_main}. 
\subsubsection{Global inequalities on $Q$ and $p$}
Lemma \ref{lem-pro} provides some global inequalities on the relative function $Q(\cdot|\cdot)$ corresponding to the convex function $Q(v)=\frac{v^{-\gamma+1}}{\gamma-1}$, $v>0$, $\gamma>1$. 
\begin{lemma}\label{lem-pro}
For given constants $\gamma>1$, and $v_->0$ There exists constants $c_1, c_2>0$ such that  the following inequalities hold.\\
1)  For any $w\in (0,v_-)$,
\begin{align}
\begin{aligned}\label{rel_Q}
& Q(v|w)\ge c_1 |v-w|^2,\quad \mbox{for all } 0<v\le 3v_-,\\
 & Q(v|w)\ge  c_2 |v-w|,\quad  \mbox{for all } v\ge 3v_-.
\end{aligned}
\end{align}
2) Moreover if $0<w\leq u\leq v$ or $0<v\leq u\leq w$ then 
\beq\label{Q-sim}
Q(v|w)\geq Q(u|w),
\eeq
and for any $\delta_*>0$ there exists a constant $C>0$ such that if, in addition, 
$v_->w>v_--\delta_*/2$ and $|w-u|>\delta_*$, we have
\beq\label{rel_Q1}
Q(v|w)-Q(u|w)\geq C|u-v|. 
\eeq
\end{lemma}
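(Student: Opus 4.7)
The plan rests on the explicit formula $Q(v)=\frac{v^{1-\gamma}}{\gamma-1}$, which gives $Q'(v)=-v^{-\gamma}$ and $Q''(v)=\gamma v^{-\gamma-1}>0$. I would begin by recording the two equivalent integral representations
\[
Q(v|w)=\int_w^v Q''(s)(v-s)\,ds=\int_w^v (w^{-\gamma}-s^{-\gamma})\,ds,
\]
both valid for $v\ge w$ with obvious analogues for $v\le w$. For the first estimate in \eqref{rel_Q}, I would use that $Q''$ is decreasing, hence on the range $s\in(0,3v_-]$ it is bounded below by $Q''(3v_-)=\gamma(3v_-)^{-\gamma-1}$. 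Inserting this bound into the first form yields $Q(v|w)\ge \tfrac12\gamma(3v_-)^{-\gamma-1}(v-w)^2$ uniformly for $w\in(0,v_-)$ and $v\in(0,3v_-]$, which is the quadratic bound.

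For the linear estimate in \eqref{rel_Q} on $v\ge 3v_-$, the integrand $w^{-\gamma}-s^{-\gamma}$ in the second form is positive on $[w,v]$, so I may restrict the integration to $[2v_-,v]$ and use $s^{-\gamma}\le(2v_-)^{-\gamma}$ together with $w^{-\gamma}\ge v_-^{-\gamma}$ to bound it below by $K:=v_-^{-\gamma}(1-2^{-\gamma})>0$. This gives $Q(v|w)\ge K(v-2v_-)$. Since $v\ge 3v_-$ implies both $v-2v_-\ge v/3$ and $v-w\le v$, I conclude $Q(v|w)\ge (K/3)(v-w)$, completing the second inequality of \eqref{rel_Q}. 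Notice that uniformity in small $w$ is automatic: as $w\to 0^+$, $w^{-\gamma}$ diverges, which only strengthens the integrand's lower bound.

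The monotonicity statement \eqref{Q-sim} is an immediate consequence of the identity $\frac{d}{dv}Q(v|w)=w^{-\gamma}-v^{-\gamma}$, which has the same sign as $v-w$: $Q(\cdot|w)$ is decreasing on $(0,w]$ and increasing on $[w,\infty)$. In both scenarios of \eqref{Q-sim}, $u$ lies between $w$ and $v$ on the same monotonic branch, so $Q(u|w)\le Q(v|w)$. For the quantitative refinement \eqref{rel_Q1} with $v_--\delta_*/2<w<v_-$ and $|w-u|>\delta_*$, I would split into two cases. If $u\ge w+\delta_*$, then $u\ge v_-+\delta_*/2$ and $v\ge u$, so
\[
Q(v|w)-Q(u|w)=\int_u^v (w^{-\gamma}-s^{-\gamma})\,ds\ge (v-u)\bigl[v_-^{-\gamma}-(v_-+\delta_*/2)^{-\gamma}\bigr].
\]
If instead $u\le w-\delta_*$, then $v\le u$, and by the mean-value theorem $u^{-\gamma}-w^{-\gamma}=\gamma\xi^{-\gamma-1}(w-u)$ for some $\xi\in(u,w)\subset(0,v_-)$, which is at least $\gamma v_-^{-\gamma-1}\delta_*$; the analogous integration on $[v,u]$ then delivers the linear lower bound. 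Taking $C$ to be the minimum of the two explicit constants finishes the proof. The only real subtlety throughout is keeping constants uniform in $w$, but this is automatic because $w$ is either confined to a compact subinterval of $(0,v_-)$ (in the refinement) or appears in the integrand in a way that only improves the bound (in part 1), so there is no genuine obstacle beyond bookkeeping.
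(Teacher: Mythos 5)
Your proof is correct and follows essentially the same route as the paper: integral/Taylor representations of $Q(v|w)$ combined with the monotonicity of $Q'$ and $Q''$, the same splitting at $v=3v_-$, and the same two-case analysis for \eqref{rel_Q1}. The only differences are cosmetic (e.g., restricting the integral to $[2v_-,v]$ rather than comparing with $Q'(w+t(v^*-v_-))$ for the linear bound), and all constants are uniform in $w$ as you note.
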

\begin{proof}
$\bullet$ {\it proof of \eqref{rel_Q}} : We denote $v^*=3v_-$. First, for the case of $v\ge v^*$, we rewrite $Q(v|w)$ as
\beq\label{qvw0}
Q(v|w)=\int_0^1 \Big(Q'(w + t(v-w))  -Q'(w)\Big) dt (v-w)
\eeq
Since $w<v_-<v^*\le v$ and $Q'$ is increasing, we have
\[
Q'(w + t(v-w))\ge Q'(w+ t(v^*-v_-)).
\]
Thus,
\begin{align*}
\begin{aligned}
Q(v|w) &\ge  \int_0^1\Big(Q'(w+ t(v^*-v_-))  -Q'(w)\Big) dt (v-w)\\
&=\int_0^1\int_0^1 Q''(w + st(v^*-v_-)) t dsdt (v^*-v_-)(v-w).
\end{aligned}
\end{align*}
Moreover, since $Q''$ is decreasing, we have
\[
Q(v|w) \ge \int_0^1\int_0^1 Q''(v_- + st(v^*-v_-)) t dsdt (v^*-v_-)(v-w),
\]
which provides the second inequality in \eqref{rel_Q}.

On the other hand, for the case of $v\le v^*$, we use 
\[
Q(v|w)=(v-w)^2\int_0^1\int_0^1 Q''(w + st(v-w)) t dsdt.
\]
Observe that for all $v\le v^*$,
\[
Q''(w+ st(v-w))=\gamma (st v+(1-st)w )^{-\gamma-1} \ge \gamma (st v^*+(1-st)v^*)^{-\gamma-1}= \gamma \Big(\frac{1}{v^*}\Big)^{\gamma+1},
\]
where we have used $w<v^*$.\\
Therefore, we have
\[
Q(v|w)\ge \frac{\gamma}{2} \Big(\frac{1}{v^*}\Big)^{\gamma+1}(v-w)^2.
\]

$\bullet$ {\it proof of \eqref{rel_Q1}} : 
Note that $z\mapsto Q(z|y)$ is convex so $\partial_z Q(z|y)$ is increasing in $z$ and zero at $z=y$. Therefore $z\mapsto Q(z|y)$ is increasing in $|z-y|$, which implies 
$$
Q(v|w)\geq Q(u|w).
$$
Moreover, if $v_->w>v_--\delta_*/2$ and $|w-u|>\delta_*$, using the facts that $Q'$ is increasing and
\begin{eqnarray*}
Q(v|w)-Q(u|w)&=&Q(v)-Q(u)-Q'(w)(v-u)\\
&=&\int_u^v [Q'(y)-Q'(w)]\,dy,
\end{eqnarray*}
we have the following:\\
If $w<u<v$, then 
\begin{eqnarray*}
Q(v|w)-Q(u|w)\geq [Q'(v_-+\delta_*/2)-Q'(v_-)](v-u).
\end{eqnarray*}
If $v<u<w$, then 
\begin{eqnarray*}
Q(v|w)-Q(u|w)&\geq& [Q'(v_--\delta_*/2)-Q'(v_--\delta_*)](u-v).
\end{eqnarray*}
Hence we have \eqref{rel_Q1}.
\end{proof}
%
%


The following lemma provides some global inequalities on the pressure $p(v)=v^{-\gamma}$, $v>0$, $\gamma>1$,  and on the associated relative function $p(\cdot|\cdot)$.

\begin{lemma}\label{lem-pro2}
For given constants $\gamma>1$, and $v_->0$, there exist constants $c_3, C>0$ such that  the following inequalities hold.\\
1)For any $w\in (v_-/4, v_-)$
\beq\label{pressure2}
|p(v)-p(w)| \le c_3 |v-w|,\quad \mbox{for all } v\ge v_-/2,
\eeq
\beq\label{pressure0}
p(v|w) \le C |v-w|^2,\quad \mbox{for all } v\ge v_-/2.
\eeq
2) For any $w\in (v_-/4, v_-)$, and all $v> 0$ 
\beq\label{pressure4}
p(v|w)\leq C(|v-w|+|p(v)-p(w)|).
\eeq
\end{lemma}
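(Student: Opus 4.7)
The plan is to deduce all three bounds directly from the smoothness of $p(v)=v^{-\gamma}$ on $(0,\infty)$, using that $|p'(y)| = \gamma y^{-\gamma-1}$ and $p''(y) = \gamma(\gamma+1) y^{-\gamma-2}$ are decreasing in $y$ and hence uniformly bounded on any set bounded away from $0$. The hypothesis $w \in (v_-/4, v_-)$ ensures $|p'(w)|$ and $p''(w)$ are controlled by constants depending only on $v_-$ and $\gamma$, and this is really the only place where the restriction on $w$ enters.

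For \eqref{pressure2}, I would observe that when $v \ge v_-/2$ and $w \in (v_-/4, v_-)$, the segment joining $w$ and $v$ is contained in $[v_-/4, \infty)$. Since $|p'|$ is decreasing on $(0,\infty)$, it is bounded above by $\gamma(v_-/4)^{-\gamma-1}$ on this segment, and the mean value theorem yields the Lipschitz bound with $c_3 = \gamma(v_-/4)^{-\gamma-1}$. For \eqref{pressure0}, I would use the Taylor integral representation
$$p(v|w) = (v-w)^2 \int_0^1 (1-s)\, p''\bigl(w+s(v-w)\bigr)\,ds.$$
The argument $w+s(v-w) = (1-s)w + sv$ is a convex combination and hence $\ge \min(v,w) \ge v_-/4$ under the standing assumptions on $v$ and $w$. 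Since $p''$ is decreasing, the integrand is pointwise bounded above by $p''(v_-/4)$, and the integral estimate then delivers $p(v|w) \le C(v-w)^2$ with $C$ depending only on $\gamma$ and $v_-$. Note that this bound is valid uniformly for all $v \ge v_-/2$, including arbitrarily large $v$, even though the dominant behavior of $p(v|w)$ is then linear in $v$, because $(v-w)^2$ grows faster.

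For part 2), the constraint $v \ge v_-/2$ is dropped, so $v$ can be arbitrarily close to the singularity of $p$ at $0$. Here I would simply apply the triangle inequality to the definition \eqref{pressure-relative}: for any $v>0$,
$$p(v|w) \;=\; p(v) - p(w) - p'(w)(v-w) \;\le\; |p(v)-p(w)| + |p'(w)|\,|v-w|,$$
and since $w \ge v_-/4$ one has $|p'(w)| \le \gamma(v_-/4)^{-\gamma-1}$ uniformly in $v$. This yields \eqref{pressure4} with a constant of the same order. The point is that for small $v$ the term $p(v)$ blows up and is the one absorbing $p(v|w)$, while for moderate-to-large $v$ the linear term $|v-w|$ suffices.

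There is no substantial obstacle; the estimates are routine Taylor and mean value analysis. The only subtlety worth flagging in the write-up is that the segment of integration must stay bounded away from the singularity at $v=0$ for the quadratic bound in part 1, which is precisely why that part requires $v \ge v_-/2$; part 2 is the correct replacement once this assumption is removed, with $|p(v)-p(w)|$ taking over the role of the divergent contribution.
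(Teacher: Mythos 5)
Your proposal is correct. Parts \eqref{pressure2} and \eqref{pressure0} follow the same route as the paper: the mean value theorem, respectively the integral form of the Taylor remainder, combined with the monotonicity of $|p'|$ and $p''$ to bound them on a segment that stays away from the singularity at $v=0$ (the paper writes the remainder as a double integral $\int_0^1\int_0^1 p''(stv+(1-st)w)\,t\,ds\,dt$, but this is the same object as your $\int_0^1(1-s)p''(w+s(v-w))\,ds$). For \eqref{pressure4} you diverge slightly: you apply the triangle inequality to the definition of $p(v|w)$ in one line, bounding $|p'(w)|$ uniformly by $\gamma(v_-/4)^{-\gamma-1}$, whereas the paper splits into the cases $v>v_-/2$ and $v\le v_-/2$ and shows in each case that a \emph{single} term on the right-hand side suffices (the linear term $|v-w|$ for moderate $v$, and the pressure difference $|p(v)-p(w)|$ for small $v$, the latter via $|p(v)-p(w)|=\int_v^w|p'(y)|\,dy\ge |p'(v_-)|\,|v-w|$). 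Your argument is shorter and fully proves the stated inequality, since the lemma only asserts the bound by the sum; the paper's case analysis yields marginally sharper information that is not needed elsewhere. No gap.
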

\begin{proof}
$\bullet$ {\it proof of \eqref{pressure2}} : Since $v,w \ge v_-/2$, \eqref{pressure2} follows from using the mean value theorem:
\[
|p(v)-p(w)| \le |p'(v_-/2)| |v-w|.
\]

$\bullet$ {\it proof of \eqref{pressure0}} : Since $v,w \ge v_-/2$, \eqref{pressure2} follows from
\begin{align*}
\begin{aligned}
 p(v|w)&=(v-w)^2\int_0^1\int_0^1 p''(stv + (1-st)w)) t dsdt\\
&\le (v-w)^2\int_0^1\int_0^1 p''(v_-/2) t dsdt =\frac{p''(v_-/2)}{2}(v-w)^2.
\end{aligned}
\end{align*}

$\bullet$ {\it proof of \eqref{pressure4}} :
For every $v>v_-/2$
$$
0\leq p(v|w)=p(v)-p(w)-p'(w)(v-w)\leq 2|p'(v_-/2)| |v-w|.
$$
And for every $v\leq v_-/2$:
$$
|p(v)-p(w)|=\int_v^w|p'(y)|\,dy\geq |p'(w)| |v-w|\geq |p'(v_-)| |v-w|.
$$
Hence
$$
0\leq p(v|w)=p(v)-p(w)-p'(w)(v-w)\leq (1+|p'(v_-/2)|^{-1})|p(v)-p(w)|.
$$
\end{proof}

\subsubsection{Local inequalities on $Q$ and $p$}
We  present now  some local estimates on $p(v|w)$ and $Q(v|w)$ for $|v- w|\ll 1$, based on  Taylor expansions. The specific  coefficients of the estimates will be crucially used in our local analysis on a suitably small truncation $|p(v)-p(\tilde v_\eps)|\ll1$.  
\begin{lemma}\label{lem:local}
For given constants $\gamma>1$ and $v_->0$ 
there exist positive constants $C$ and $\delta_*$ such that for any $0<\delta<\delta_*$, the following is true.\\
1) For any $(v, w)\in \bbr_+^2$ 
satisfying $|p(v)-p(w)|<\delta$, and  $|p(w)-p(v_-)|<\delta$ the following estimates \eqref{p-est1}-\eqref{Q-est1} hold:
\begin{align}
\begin{aligned}\label{p-est1}
p(v|w)&\le \bigg(\frac{\gamma+1}{2\gamma} \frac{1}{p(w)} + C\delta \bigg) |p(v)-p(w)|^2,
\end{aligned}
\end{align}
\beq\label{Q-est11}
Q(v|w)\ge \frac{p(w)^{-\frac{1}{\gamma}-1}}{2\gamma}|p(v)-p(w)|^2 -\frac{1+\gamma}{3\gamma^2} p(w)^{-\frac{1}{\gamma}-2}(p(v)-
p(w))^3,
\eeq
\beq\label{Q-est1}
Q(v|w)\le \bigg( \frac{p(w)^{-\frac{1}{\gamma}-1}}{2\gamma} +C\delta  \bigg)|p(v)-p(w)|^2.
\eeq
2) For any $(v, w)\in \bbr_+^2$ such that  $|p(w)-p(v_-)|\leq \delta$,  and satisfying either $Q(v|w)<\delta$ or $|p(v)-p(w)|<\delta$,
\beq\label{pQ-equi0}
|p(v)-p(w)|^2 \le C Q(v|w).
\eeq
\end{lemma}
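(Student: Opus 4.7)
The plan is to treat the two relative quantities as functions of the pressure. Set $P = p(v)$ and $W = p(w)$, so that $v = P^{-1/\gamma}$ and $w = W^{-1/\gamma}$ are smooth in $P$ and $W$, and write
\[
\tilde p(P) := p(v|w) = P - W - p'(w)(v(P) - w), \qquad \tilde Q(P) := Q(v|w) = Q(v(P)) - Q(w) + p(w)(v(P) - w),
\]
where the second expression uses $Q'(w) = -p(w) = -W$. Because $|W - p(v_-)| < \delta$ with $\delta \le \delta_*$, the value $W$ stays in a fixed compact subinterval of $(0,\infty)$ depending only on $v_-$ and $\gamma$, so every derivative in $P$ that appears below is uniformly controlled for $P$ in a neighborhood of $W$.

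First I would compute the Taylor coefficients at $P = W$ explicitly. Using $v^{(k)}(P) = \prod_{j=0}^{k-1}(-\tfrac{1}{\gamma} - j)\,P^{-1/\gamma - k}$, a direct computation gives $\tilde p(W) = \tilde p'(W) = 0$ and $\tilde p''(W) = -p'(w) v''(W) = (\gamma+1)/(\gamma W)$. Writing $\tilde Q(P) = F(P) - F(W) + W(v(P) - w)$ with $F(P) = P^{(\gamma-1)/\gamma}/(\gamma-1)$ and noticing that $F' = v/\gamma$, a short calculation yields
\[
\tilde Q^{(k)}(W) = (1-k) \prod_{j=0}^{k-2}\Bigl(-\tfrac{1}{\gamma} - j\Bigr)\, W^{-1/\gamma - k + 1}, \qquad k \geq 2,
\]
from which $\tilde Q(W) = \tilde Q'(W) = 0$, $\tilde Q''(W) = W^{-1/\gamma-1}/\gamma$, $\tilde Q'''(W) = -2(\gamma+1) W^{-1/\gamma - 2}/\gamma^2$, and, crucially, $\tilde Q^{(4)}(W) > 0$. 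By continuity this last inequality persists on $|P-W| \le \delta_*$ after shrinking $\delta_*$ if necessary.

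To prove \eqref{Q-est11} I apply Taylor's theorem with Lagrange remainder at order four:
\[
\tilde Q(P) = \frac{\tilde Q''(W)}{2}(P-W)^2 + \frac{\tilde Q'''(W)}{6}(P-W)^3 + \frac{\tilde Q^{(4)}(\xi)}{24}(P-W)^4,
\]
with $\xi$ between $P$ and $W$. Inserting the values above and dropping the non-negative quartic remainder delivers \eqref{Q-est11}. For \eqref{p-est1} and \eqref{Q-est1} I use a second-order Taylor expansion of $\tilde p$ and $\tilde Q$ respectively; the cubic remainder is bounded uniformly by $C|P-W|^3 \le C\delta (P-W)^2$ when $|P-W| \le \delta$, which combined with $\tilde p''(W)/2 = (\gamma+1)/(2\gamma p(w))$ and $\tilde Q''(W)/2 = p(w)^{-1/\gamma - 1}/(2\gamma)$ gives both upper bounds.

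For part 2) I split two cases. If $|p(v) - p(w)| < \delta$, factoring $(P-W)^2$ out of \eqref{Q-est11} and choosing $\delta$ small enough so that the cubic correction inside the bracket is dominated by half of the positive quadratic coefficient yields \eqref{pQ-equi0}. If instead $Q(v|w) < \delta$, I use Lemma \ref{lem-pro}: both branches of \eqref{rel_Q} force $|v - w| \to 0$ as $\delta \to 0$, while the divergence $Q(v) \to +\infty$ as $v \to 0^+$ (with $p(w)(v-w)$ remaining bounded for $w$ near $v_-$) keeps $v$ bounded away from $0$. Hence $v$ stays in a fixed compact subset of $(0,\infty)$ on which $p$ is Lipschitz, so $|p(v) - p(w)| \le C\sqrt{\delta} < \delta_*$ for $\delta$ sufficiently small, reducing to the first case. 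The one delicate step is the positivity of $\tilde Q^{(4)}$ near $P = W$, which is what upgrades the Taylor expansion from a mere asymptotic identity to the honest lower bound \eqref{Q-est11} with the explicit cubic correction; everything else is bookkeeping on uniformly controlled remainders.
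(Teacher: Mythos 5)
Your proof is correct and follows essentially the same route as the paper: both pass to the pressure variable, Taylor-expand $p(v|w)$ and $Q(v|w)$ about $p(w)$ with the same explicit coefficients, and use the positivity of the fourth derivative to upgrade the third-order expansion to the one-sided bound \eqref{Q-est11}. The only (harmless) deviations are your use of the Lagrange remainder at order four instead of retaining the positive quartic term plus a fifth-order remainder, and your deduction of \eqref{pQ-equi0} in the case $|p(v)-p(w)|<\delta$ directly from \eqref{Q-est11} rather than via \eqref{Q-est1}.
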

\begin{proof}
We consider $\delta_*\leq p(v_-)/4$.

$\bullet$ {\bf proof of \eqref{p-est1}}
From the hypothesis, we have both $|p(v)-p(v_-)|\leq  p(v_-)/2$ and $|p(w)-p(v_-)|\leq  p(v_-)/2$.
First, we rewrite $p(v|w)$ in terms of $p(v)-p(w)$ as
\begin{align*}
\begin{aligned}
p(v|w)&=p(v)-p(w)+\gamma w^{-\gamma-1} (v-w)\\
&=p(v)-p(w)+\gamma p(w)^{\frac{\gamma+1}{\gamma}} \big(p(v)^{-\frac{1}{\gamma}}-p(w)^{-\frac{1}{\gamma}}\big).
\end{aligned}
\end{align*}
Setting $F_1(p):=p-\tilde p+\gamma\tilde p^{\frac{\gamma+1}{\gamma}} (p^{-\frac{1}{\gamma}}-\tilde p^{-\frac{1}{\gamma}})$ where $p:=p(v)$, $\tilde p:=p(w)$, we apply the Taylor theorem to $F_1$ about $\tilde p$. That is, using
\[
F_1'(p)=1-\tilde p^{\frac{\gamma+1}{\gamma}} p^{-\frac{\gamma+1}{\gamma}},\quad F_1''(p)=\frac{\gamma+1}{\gamma}
\tilde p^{\frac{\gamma+1}{\gamma}} p^{-\frac{2\gamma+1}{\gamma}},
\]
since $F_1(\tilde p)=0, F_1'(\tilde p)=0,$ and $F_1''(\tilde p)=\frac{\gamma+1}{\gamma \tilde p}$, we have
\begin{align*}
\begin{aligned}
p(v|w)&=F_1(p)= \frac{\gamma+1}{\gamma \tilde p}\frac{|p-\tilde p|^2}{2} +\frac{F_1'''(p_*)}{6} |p-\tilde p|^3, 
\end{aligned}
\end{align*}
where $p_*$ lies between $p$ and $\tilde p$. 
Therefore $\frac{p(v_-)}{2}<p_*<2p(v_-)$.  Taking  $\delta\leq \delta_*$, we have 
\[
p(v|w) \le \frac{\gamma+1}{\gamma \tilde p}\frac{|p-\tilde p|^2}{2} + C\delta |p-\tilde p|^2.
\]
Therefore, we have \eqref{p-est1}.

$\bullet$ {\bf proof of \eqref{Q-est11} and \eqref{Q-est1}}
Likewise, since
\begin{align*}
\begin{aligned}
Q(v|w)&=Q(v)-Q(w)+p(w) (v-w)\\
&=\frac{p(v)^{\frac{\gamma-1}{\gamma}}}{\gamma-1}-\frac{p(w)^{\frac{\gamma-1}{\gamma}}}{\gamma-1}+p(w) (p(v)^{-\frac{1}{\gamma}}-p(w)^{-\frac{1}{\gamma}}).
\end{aligned}
\end{align*}
setting $F_2(p):=\frac{p^{\frac{\gamma-1}{\gamma}}}{\gamma-1}-\frac{\tilde p^{\frac{\gamma-1}{\gamma}}}{\gamma-1}+\tilde p (p^{-\frac{1}{\gamma}}-\tilde p^{-\frac{1}{\gamma}})$ where $p:=p(v)$, $\tilde p:=p(w)$, we apply the Taylor theorem to $F_2$ about $\tilde p$. That is,
 using
\begin{align*}
\begin{aligned}
&F_2'(p)=\frac{1}{\gamma} p^{-\frac{1}{\gamma}}\Big(1- \tilde pp^{-1}\Big),\quad F_2''(p)=-\frac{1}{\gamma^2} p^{-\frac{1+\gamma}{\gamma}}\Big(1- (1+\gamma)\tilde pp^{-1}\Big),\\
&F_2'''(p)=\frac{1+\gamma}{\gamma^3} p^{-\frac{1+2\gamma}{\gamma}}\Big(1- (1+2\gamma)\tilde pp^{-1}\Big),\\
& F_2''''(p)=-\frac{(1+\gamma)(1+2\gamma)}{\gamma^4} p^{-\frac{1+3\gamma}{\gamma}}\Big(1- (1+3\gamma)\tilde pp^{-1}\Big),
\end{aligned}
\end{align*}
and then
\begin{align*}
\begin{aligned}
&F_2(\tilde p)=0,\quad F_2'(\tilde p)=0,\quad F_2''(\tilde p)=\frac{1}{\gamma} \tilde p^{-\frac{1}{\gamma}-1},\\
&F_2'''(\tilde p)=-\frac{2(1+\gamma)}{\gamma^2} \tilde p^{-\frac{1}{\gamma}-2},\quad F_2''''(\tilde p)=\frac{3(1+\gamma)(1+2\gamma)}{\gamma^3}\tilde p^{-\frac{1+3\gamma}{\gamma}},
\end{aligned}
\end{align*}
we have
\begin{align*}
\begin{aligned}
Q(v|w)&=F_2''(\tilde p) \frac{(p-\tilde p)^2}{2}+F_2'''(\tilde p) \frac{(p-\tilde p)^3}{6} +F_2''''(\tilde p)\frac{(p-\tilde p)^4}{24} +F_2^{(5)}(p_*)\frac{(p-\tilde p)^5}{5!}. 
\end{aligned}
\end{align*} 
Since $F_2''''(\tilde p)\ge \frac{3(1+\gamma)(1+2\gamma)}{\gamma^3} [p(v-)/2]^{-\frac{1+3\gamma}{\gamma}}>0$, taking $\delta_*$ smaller if needed,  we have for every $\delta<\delta_*$
\[
Q(v|w)\ge F_2''(\tilde p) \frac{|p-\tilde p|^2}{2}+F_2'''(\tilde p) \frac{(p-\tilde p)^3}{6},
\]
which completes \eqref{Q-est11}.
The estimate \eqref{Q-est1} follows by considering the 2nd order Taylor polynomial as done in \eqref{p-est1}.

$\bullet$ {\bf proof of \eqref{pQ-equi0}}
Since it follows from \eqref{rel_Q} that $\min\{c_1|v-w|^2,c_2|v-w|\}\le Q(v|w)$, if $Q(v|w)<\delta<\delta_*\ll1$, then $|v-w|\ll1$ and thus $\frac{v_-}{2}<v<2v_-$ and $c_1|v-w|^2\le Q(v|w)$.\\
Therefore, 
\beq\label{pqtem}
|p(v)-p(w)|^2\le |p'(\frac{v_-}{2})|^2 |v-w|^2 \le c_1^{-1}|p'(\frac{v_-}{2})|^2 Q(v|w).
\eeq
If $|p(v)-p(w)|<\delta$, then it follows from \eqref{Q-est1} that 
\[
Q(v|w) \le C|p(v)-p(w)|^2 <\delta,
\]
which gives \eqref{pqtem}.
\end{proof}

\subsection{Some functional inequalities}

We state in this section some standard functional inequalities. Some of the proofs will be postponed to the appendix.
The first result is a simple inequality on a specific polynomial functional.
\begin{lemma}\label{lem-alge}
For all $x\in [-2,0)$,
\[
2x-2x^2-\frac{4}{3}x^3 +\frac{4\theta}{3} \Big(-x^2 -2x \Big)^{3/2}< 0,
\]
where $\theta =\sqrt{5-\frac{\pi^2}{3}}$.
\end{lemma}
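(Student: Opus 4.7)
The plan is to rewrite the inequality as $H(x) < G(x)$ on $[-2,0)$, where
\[
G(x) := -2x + 2x^2 + \tfrac{4}{3}x^3 = \tfrac{2x}{3}(2x^2 + 3x - 3), \qquad H(x) := \tfrac{4\theta}{3}(-x^2-2x)^{3/2}.
\]
My first step is to verify that both $G$ and $H$ are nonnegative on $[-2,0]$: for $G$, the factor $-x \geq 0$ there, and the quadratic $2x^2 + 3x - 3$ has its roots $(-3 \pm \sqrt{33})/4$ outside $[-2,0]$, hence $2x^2 + 3x - 3 < 0$ on this interval; for $H$, the radicand $-x(x+2) \geq 0$. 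Since both quantities are nonnegative, the inequality is equivalent to $H(x)^2 < G(x)^2$, which after clearing denominators can be rearranged as
\[
4\theta^2\, T(x) < 1, \qquad T(x) := \frac{(-x)(x+2)^3}{(2x^2 + 3x - 3)^2}.
\]

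The heart of the argument is to compute the maximum of $T$ on $[-2,0]$ in closed form. Since $T(-2) = T(0) = 0$, the maximum is attained at an interior critical point. A direct computation shows that $T'(x)=0$ reduces, after factoring out $(x+2)^2(2x^2+3x-3)$, to $x^2 + 3x + 1 = 0$; the unique root of this equation inside $(-2,0)$ is $x^* = (\sqrt{5}-3)/2$. Using the defining relation $(x^*)^2 = -3x^*-1$ and the observation that $x^* + 2 = (1+\sqrt{5})/2$ is the golden ratio $\phi$ (so that $\phi^3 = 2 + \sqrt{5}$), a short computation yields
\[
(-x^*)(x^*+2)^3 = \tfrac{1+\sqrt{5}}{2}, \qquad (2(x^*)^2 + 3x^* - 3)^2 = \tfrac{23 + 3\sqrt{5}}{2},
\]
and rationalizing gives the clean closed form
\[
T(x^*) = \frac{5\sqrt{5} + 2}{121}.
\]

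The final step reduces the problem to a numerical comparison with $\pi^2$. The required bound $4\theta^2 T(x^*) < 1$ is, after rationalization, equivalent to $\theta^2 < (5\sqrt{5}-2)/4$, and substituting $\theta^2 = 5 - \pi^2/3$ it becomes
\[
\pi^2 > \frac{66 - 15\sqrt{5}}{4}.
\]
Since $\sqrt{5} > 2$, the right-hand side is strictly less than $(66-30)/4 = 9$, and $\pi > 3$ gives $\pi^2 > 9$, concluding the argument.

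The main (mild) obstacle is recognizing the algebraic simplification at the critical point: that the critical equation collapses to $x^2+3x+1=0$ and that the golden-ratio structure of $x^*+2$ produces the closed form $T(x^*) = (5\sqrt{5}+2)/121$. Once this is in hand, the comparison with $\pi^2/3$ is quite loose — the crude estimate $\pi^2 > 9$ suffices with room to spare — so the conclusion follows without subtlety.
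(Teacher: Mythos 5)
Your proof is correct, and it takes a genuinely different route from the paper's. The paper argues by a fairly delicate calculus analysis of $g(x)=2x-2x^2-\tfrac43 x^3+\tfrac{4\theta}{3}(-x^2-2x)^{3/2}$ itself: it shows $g''>0$ on $[-2,-\tfrac{1+\sqrt3}{2}]$, $g'>0$ on $(-\tfrac{1+\sqrt3}{2},-1]$, locates the two roots of $g'$ in $[-1,0]$ via an auxiliary quartic, and finally rules out a nonnegative interior maximum by an integral monotonicity estimate that comes down to comparing the explicit decimals $0.107$ and $0.08$. You instead verify that both sides of $H(x)<G(x)$ are nonnegative on $[-2,0)$ (with $G>0$ there, since the roots $(-3\pm\sqrt{33})/4$ of $2x^2+3x-3$ lie outside the interval), square, and reduce to bounding the rational function $T(x)=(-x)(x+2)^3/(2x^2+3x-3)^2$. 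I checked the key computations: the numerator of $T'$ factors as $6(x+2)^2(2x^2+3x-3)(x^2+3x+1)$, so the unique interior critical point is $x^*=(\sqrt5-3)/2$; using $(x^*)^2=-3x^*-1$ and $(x^*+2)^3=2+\sqrt5$ one indeed gets $T(x^*)=(5\sqrt5+2)/121$, and $4\theta^2 T(x^*)<1$ is equivalent to $\pi^2>(66-15\sqrt5)/4$, which follows from $\pi^2>9$. Your argument buys an exact closed-form maximum and a very loose final numerical comparison (margin roughly $9.87$ versus $8.11$), whereas the paper's proof avoids the quartic maximization but relies on tighter ad hoc numerical estimates; both are complete, and yours is arguably the more transparent and robust of the two.
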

The proof of Lemma \ref{lem-alge} is given in Appendix \ref{app-1}.

The second result is a sharp point-wise estimate.

\begin{lemma}\label{lem-sup}
Let $f\in C^1(0,1)$. Then, for all $x\in[0,1)$,
\[
\Big|f(x)-\int_0^1 f(y) dy\Big| \le \sqrt{L(x)+L(1-x)} \sqrt{\int_0^1 y(1-y)|f'|^2 dy},
\]
where $L(x):=-x-\ln(1-x)$.
Moreover 
\[
\Big(\int_0^1(L(y)+L(1-y))^{2}dy\Big)^{1/2} =\sqrt{5-\frac{\pi^2}{3}}=\theta.
\]
\end{lemma}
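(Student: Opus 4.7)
The plan is to express $f(x)$ as an explicit linear integral against $f'$ and then apply Cauchy--Schwarz with a judicious weight. Since both sides are invariant under adding a constant to $f$, I would first assume without loss of generality that $\int_0^1 f\,dy = 0$, so that the left-hand side reduces to $|f(x)|$. Writing $f(x) = \int_0^1 (f(x)-f(y))\,dy$ and using $f(x)-f(y) = \int_y^x f'(z)\,dz$, Fubini yields
\[
f(x) = \int_0^x z\,f'(z)\,dz - \int_x^1 (1-z)\,f'(z)\,dz = \int_0^1 K(x,z)\,f'(z)\,dz,
\]
with kernel $K(x,z) = z$ for $z<x$ and $K(x,z) = -(1-z)$ for $z>x$ (a correctness check at $x=0$ and $x=1$ recovers the mean-zero value of $f$ at the endpoints via integration by parts).

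Next I would apply Cauchy--Schwarz after factoring $\sqrt{z(1-z)}$ out of $f'(z)$:
\[
|f(x)|^2 \le \Big(\int_0^1 \frac{K(x,z)^2}{z(1-z)}\,dz\Big)\Big(\int_0^1 z(1-z)|f'(z)|^2\,dz\Big).
\]
The first factor splits as $\int_0^x \tfrac{z}{1-z}\,dz + \int_x^1 \tfrac{1-z}{z}\,dz$, which by elementary primitives equals $L(x) + L(1-x)$. This gives the pointwise inequality. The weight $\sqrt{z(1-z)}$ is not a free parameter: it is forced because $K(x,\cdot)^2$ has linear endpoint vanishing, so dividing by $z(1-z)$ produces the weakest integrable singularities near $0$ and $1$, and hence the sharp constant.

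For the $L^2$-identity I would exploit the simplification
\[
L(y) + L(1-y) = -1 - \ln\bigl(y(1-y)\bigr),
\]
square it, and integrate term by term. The required standard integrals are $\int_0^1 \ln y\,dy = -1$, $\int_0^1 \ln^2 y\,dy = 2$, and the classical dilogarithm value $\int_0^1 \ln y\ln(1-y)\,dy = 2-\pi^2/6$; the latter is obtained by expanding $\ln(1-y) = -\sum_{n\ge 1} y^n/n$, swapping sum and integral, and summing $\sum_{n\ge 1} 1/[n(n+1)^2]$ via partial fractions. Adding the contributions gives $1 - 4 + 8 - \pi^2/3 = 5 - \pi^2/3$.

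The potentially tricky point is the dilogarithm identity, but it is standard and causes no real difficulty; the conceptual step I expect to matter most is the choice of representation $f(x) = \int_0^1 (f(x)-f(y))\,dy$, since any other representation of the mean-zero constraint (for instance, a one-sided integration by parts starting from $0$ or $1$) produces a kernel whose weighted $L^2$ norm is strictly larger than $\sqrt{L(x)+L(1-x)}$ and hence loses sharpness.
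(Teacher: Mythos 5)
Your proof is correct and follows essentially the same route as the paper: the same representation of $f(x)-\int_0^1 f\,dy$ as an integral of $f'$ against a kernel vanishing linearly at the endpoints, the same weighted Cauchy--Schwarz with weight $z(1-z)$, and the same evaluation of $\int_0^1\big(1+\ln y+\ln(1-y)\big)^2dy$ via $\int_0^1\ln y\,\ln(1-y)\,dy=2-\tfrac{\pi^2}{6}$. The only cosmetic difference is that you apply Cauchy--Schwarz once to the signed kernel $K(x,z)$, whereas the paper bounds the contributions from $[0,x]$ and $[x,1]$ separately and then optimizes over the splitting of the dissipation integral; both yield the identical constant $\sqrt{L(x)+L(1-x)}$.
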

\begin{proof}
First, since
\begin{align*}
\begin{aligned}
f(x)-\int_0^1 f(y) dy = \int_0^1\int_y^x f'(z)dzdy =\int_0^x\int_y^x f'(z)dzdy + \int_x^1\int_y^x f'(z)dzdy,
\end{aligned}
\end{align*}
we have
\begin{align*}
\begin{aligned}
\Big|f(x)-\int_0^1 f(y) dy \Big| &\le \int_0^x\int_y^x |f'(z)|dzdy + \int_x^1\int_x^y |f'(z)|dzdy\\
&\le \underbrace{\bigg(\int_0^x\int_y^x \frac{1}{1-z}dzdy \bigg)^{\frac{1}{2}}\bigg(\int_0^x\int_y^x (1-z)|f'(z)|^2dzdy \bigg)^{\frac{1}{2}}}_{=:I_1}\\
&\quad+ \underbrace{\bigg(\int_x^1\int_x^y \frac{1}{z}dzdy \bigg)^{\frac{1}{2}}\bigg(\int_x^1\int_x^y z|f'(z)|^2dzdy \bigg)^{\frac{1}{2}}}_{=:I_2}.
\end{aligned}
\end{align*}
Using Fubini's theorem as $\int_0^x\int_y^x g dzdy=\int_0^x\int_0^z gdydz$, we have
\begin{align*}
\begin{aligned}
I_1&=\bigg(\int_0^x \frac{z}{1-z}dz \bigg)^{\frac{1}{2}}\bigg(\int_0^x z(1-z)|f'(z)|^2dz \bigg)^{\frac{1}{2}}\\
&=\big(-x-\ln(1-x)\big)^{\frac{1}{2}}\bigg(\int_0^x z(1-z)|f'(z)|^2dz \bigg)^{\frac{1}{2}},
\end{aligned}
\end{align*}
and likewise,
\begin{align*}
\begin{aligned}
I_2&=\bigg(\int_x^1 \frac{1-z}{z}dz \bigg)^{\frac{1}{2}}\bigg(\int_x^1 z(1-z)|f'(z)|^2dz \bigg)^{\frac{1}{2}}\\
&=\big(-(1-x)-\ln x\big)^{\frac{1}{2}}\bigg(\int_x^1 z(1-z)|f'(z)|^2dz \bigg)^{\frac{1}{2}}.
\end{aligned}
\end{align*}
Let $L(x):=-x-\ln(1-x)$ and 
\[
X:=\int_0^x z(1-z)|f'(z)|^2dz,\quad D:=\int_0^1 z(1-z)|f'(z)|^2dz.
\]
Then,
\[
I_1+I_2=\sqrt{L(x)}\sqrt{X}+\sqrt{L(1-x)}\sqrt{D-X}
\]
If we consider a function $F(X):=\sqrt{L(x)}\sqrt{X}+\sqrt{L(1-x)}\sqrt{D-X}$ for $X\in[0,D]$, we see that the function $F$ has a maximum at $\bar X:=\frac{L(x)D}{L(x)+L(1-x)}$. \\
Thus, we have
\[
I_1+I_2\le F(\bar X)=\sqrt{L(x)+L(1-x)}\sqrt{D},
\]
which completes the desired inequality.
We now  compute the value of $\theta$.          
 We have
\begin{align*}
\begin{aligned}
\int_0^1\big(L(x)+L(1-x)\big)^{2}dx &= \int_0^1\big(1+\ln(1-x)+\ln x \big)^{2}dx\\
&=1+\int_0^1 \big( \ln(1-x) \big)^2 \, dx +\int_0^1 \big( \ln x \big)^2 \, dx + 2\int_0^1  \ln (1-x) \,dx\\
&\quad +2\int_0^1  \ln x \, dx +2\int_0^1  \ln (1-x) \ln x \, dx.
\end{aligned}
\end{align*}
Since $\int_0^1  \ln (1-x) \, dx=\int_0^1  \ln x \,dx=-1$, we have
\[
\int_0^1  \big( \ln(1-x) \big)^2 dx=  \int_0^1  \big( \ln x \big)^2 dx = \Big[ x\big( \ln x \big)^2 \Big]_0^1-2 \int_0^1  \ln x \, dx = -2\int_0^1  \ln x \, dx=2.
\]
Thus, 
\[
\int_0^1\big(L(x)+L(1-x)\big)^{2}dx= 1+2\int_0^1  \ln (1-x) \ln x \, dx.
\]
To compute the last integral, we find
\begin{align*}
\begin{aligned}
\int_0^1  \ln (1-x) \ln x \, dx &= \Big[\Big(x\ln(1-x)-x-\ln(1-x) \Big) \ln x \Big]_0^1 \\
&\qquad\qquad\qquad-\int_0^1 \frac{x\ln(1-x)-x-\ln(1-x)}{x} dx\\
&=-\int_0^1\ln(1-x)dx +1 +\int_0^1 \frac{\ln(1-x)}{x} \, dx=2+\int_0^1 \frac{\ln(1-x)}{x} \, dx.
\end{aligned}
\end{align*}
Since 
\[
\int\frac{\ln(1-x)}{x}\, dx=-\sum_{n=1}^{\infty}\frac{x^{n}}{n^2},\quad |x|\le1,
\]
we have
\[
\int_0^1 \frac{\ln(1-x)}{x} dx =-\sum_{n=1}^{\infty}\frac{1}{n^2}=-\frac{\pi^2}{6}.
\]
This gives the result.
\end{proof}

\begin{lemma}\label{lem-poin}
For any $f:[0,1]\to\bbr$ satisfying $\int_0^1 y(1-y)|f'|^2 dy<\infty$, 
\beq\label{poincare}
\int_0^1\Big|f-\int_0^1 f dy \Big|^2 dy\le \frac{1}{2}\int_0^1 y(1-y)|f'|^2 dy.
\eeq
\end{lemma}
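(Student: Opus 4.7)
The plan is to prove the inequality by reducing it to a weighted Hardy-type estimate on $H^1_0(0,1)$. First, I would subtract the mean: set $g := f - \int_0^1 f\, dy$, so that $g$ has zero mean and $g' = f'$, and the target becomes $\int_0^1 g^2\, dy \le \tfrac12 \int_0^1 y(1-y)(g')^2\, dy$. Next, introduce the primitive $h(y) := \int_0^y g(s)\, ds$, which satisfies $h(0) = h(1) = 0$ (the value at $1$ vanishing precisely because $g$ has zero mean) and $h' = g$. Integration by parts together with a weighted Cauchy--Schwarz inequality then yields
\[
\int_0^1 g^2\, dy \;=\; -\int_0^1 g'\, h\, dy \;\le\; \left(\int_0^1 y(1-y)(g')^2\, dy\right)^{1/2}\!\left(\int_0^1 \frac{h^2}{y(1-y)}\, dy\right)^{1/2}.
\]

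The problem is thereby reduced to establishing the Hardy-type estimate
\[
(\star) \qquad \int_0^1 \frac{h^2}{y(1-y)}\, dy \;\le\; \frac12 \int_0^1 (h')^2\, dy \qquad \text{for } h \in H^1_0(0,1).
\]
For $(\star)$ I would use the substitution $h(y) = y(1-y)\, u(y)$, so $h'(y) = (1-2y)u + y(1-y)u'$. Expanding $(h')^2$ produces a cross term $2(1-2y)y(1-y)\, u u' = (1-2y)y(1-y) (u^2)'$, which after integration by parts (using $\tfrac{d}{dy}[(1-2y)y(1-y)] = -2y(1-y) + (1-2y)^2$) cancels the $(1-2y)^2 u^2$ term exactly and leaves
\[
\int_0^1 (h')^2\, dy \;=\; 2\int_0^1 y(1-y)\, u^2\, dy \;+\; \int_0^1 y^2(1-y)^2 (u')^2\, dy.
\]
Since $\int_0^1 h^2/(y(1-y))\, dy = \int_0^1 y(1-y)\, u^2\, dy$, the estimate $(\star)$ follows, with equality precisely when $u' \equiv 0$, i.e., $h(y) = c\, y(1-y)$. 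Combining $(\star)$ with the Cauchy--Schwarz bound and dividing by $(\int g^2)^{1/2}$ (when positive; otherwise the conclusion is trivial) gives the desired inequality with the sharp constant $\tfrac12$.

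The main technical point to address is the justification of the integration by parts in the derivation of $(\star)$: the weight $y(1-y)$ vanishes at the endpoints, while $u = h/(y(1-y))$ may blow up there. I would handle this by first proving $(\star)$ for $h \in C_c^\infty(0,1)$, where all boundary contributions vanish trivially, and then extending to $h \in H^1_0(0,1)$ by density. The primitive $h$ of $g \in L^2(0,1)$ with zero mean lies in $H^1_0(0,1)$, and the general case where $f$ merely satisfies $\int_0^1 y(1-y)(f')^2\, dy < \infty$ (without a priori $L^2$ control on $f$) is recovered by a standard truncation/approximation argument combined with Fatou's lemma applied to the two sides of the inequality.
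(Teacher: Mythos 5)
Your argument is correct and yields the sharp constant $\tfrac12$, but it proceeds by a genuinely different route from the paper. The paper's proof (Appendix B) is spectral: after mapping $[0,1]$ to $[-1,1]$ it expands $w-\bar w$ in Legendre polynomials, uses the Legendre equation $\big((1-x^2)P_n'\big)'=-n(n+1)P_n$, and reads off the inequality from the spectral gap $n(n+1)\ge 2$ for $n\ge 1$ of the operator $-\frac{d}{dx}\big((1-x^2)\frac{d}{dx}\big)$; this immediately exhibits the extremal $P_1$ and explains where $\tfrac12$ comes from. You instead dualize: writing $\int g^2=-\int g'h$ with $h$ the zero-boundary primitive of $g$, applying weighted Cauchy--Schwarz, and reducing to the Hardy-type bound $\int h^2/(y(1-y))\le\tfrac12\int (h')^2$ on $H^1_0$, which you prove by the ground-state substitution $h=y(1-y)u$. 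I checked the substitution computation: the cross term integrates to $\int[2y(1-y)-(1-2y)^2]u^2$, which cancels the $(1-2y)^2u^2$ term and leaves $\int(h')^2=2\int y(1-y)u^2+\int y^2(1-y)^2(u')^2$, so $(\star)$ holds and the chain of inequalities closes (equality forces $g$ affine, consistent with the Legendre extremal). Your approach is more elementary in that it avoids invoking completeness of the Legendre system in $L^2$, at the cost of the endpoint technicalities you already flag: the boundary terms in $\int gh'=-\int g'h$ (which do vanish, since $|h(y)|\lesssim\sqrt{y}$ while $|g(y)|\lesssim\sqrt{\log(1/y)}$ under the hypothesis) and the density argument for $(\star)$. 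Note also that the hypothesis $\int y(1-y)|f'|^2<\infty$ already forces $f\in L^2(0,1)$ by the logarithmic bound just mentioned, so the final truncation/Fatou step is not actually needed. Both proofs are complete at the same level of rigor; the paper's is shorter given the Legendre machinery, yours is self-contained.
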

The proof of this lemma is given in Appendix \ref{app-2}.


\vspace{1cm}

\section{Proof of Theorem \ref{thm_main}}\label{section_theo}
\setcounter{equation}{0}

\subsection{Construction of the shift $X$ and the main proposition}

For any fixed $\eps>0$, we consider a continuous function $\Phi_\eps$ defined by
\beq\label{Phi-d}
\Phi_\eps (y)=
\left\{ \begin{array}{ll}
      \frac{1}{\eps^2},\quad \mbox{if}~ y\le -\eps^2, \\
      -\frac{1}{\eps^4}y,\quad \mbox{if} ~ |y|\le \eps^2, \\
       -\frac{1}{\eps^2},\quad \mbox{if}  ~y\ge \eps^2. \end{array} \right.
\eeq
We define a shift function $X(t)$ as a solution of the nonlinear ODE:
\beq\label{X-def}
\left\{ \begin{array}{ll}
       \dot X(t) = \Phi_\eps (Y(U^X)) \Big(2|\mathcal{B}(U^X)|+1 \Big),\\
       X(0)=0, \end{array} \right.
\eeq
where $Y$ and $\mathcal{B}$ are as in \eqref{badgood}.
Therefore, for the solution $U\in C(0,T;\mathcal{H})$, the shift $X$ exists and is unique at least locally by the Cauchy-Lipschitz theorem. 
Indeed, since $\tilde v_\eps', \tilde h_\eps', a', a''$ are bounded smooth and integrable, using $U\in C(0,T;\mathcal{H})$ together with the change of variables $\xi\mapsto \xi-X(t)$ as in \eqref{move-X}, we find that the right-hand side of the ODE \eqref{X-def} is uniformly Lipschitz continuous in $X$, and is continuous in $t$.

Moreover, the global-in-time existence and uniqueness of the shift are verified by the a priori estimate \eqref{112}.

The main proposition as a corner stone of proof of the Theorem is the following.
\begin{proposition}\label{prop:main}
There exist $\eps_0,\delta_0>0$, such that for any $\eps<\eps_0$ and $\delta_0^{-1}\eps<\lambda<\delta_0<1/2$, the following is true.\\
For any $U\in\mathcal{H} \cap \{U~|~|Y(U)|\le\eps^2 \}$,
\beq\label{prop:est}
\mathcal{R}(U):= -\frac{1}{\eps^4}Y^2(U) +(1+\delta_0(\eps/\lambda))|\mathcal{B}(U)|- \mathcal{G}(U) \le 0.
\eeq
\end{proposition}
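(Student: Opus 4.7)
The plan is to execute the four-step reduction outlined in the introduction in forward order, from the global nonlinear statement down to a scalar one-variable polynomial inequality. The overarching structure exploits that $\mathcal{G}(U)=\mathcal{G}_1(U)+\mathcal{G}_2(U)+\mathcal{D}(U)$ and that $\mathcal{B}(U)$ depends only on $v$, so the $h$-dependence can be quarantined inside $\mathcal{G}_1$.

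\textbf{Step 1 (Dissipation-dominated regime).} Split cases according to the size of $\mathcal{D}(U)=\int_\bbr a|\partial_\xi(p(v)-p(\tilde v_\eps))|^2 d\xi$. First, when $\mathcal{D}(U)\ge \eps^2/\lambda$, I would show $\mathcal{R}(U)\le 0$ directly by bounding $\mathcal{B}(U)$ in terms of $\mathcal{D}(U)$ using the weighted smallness coming from $|Y(U)|\le \eps^2$ together with the definition \eqref{weight-a} of $a$ (which gives $\eps e^{-C\eps|\xi|}$ weights via \eqref{d-weight} and \eqref{tail}). The nontrivial point is handling the region where $v$ is small: there $p(v)^2=v^{-2\gamma}$ is much worse than $Q(v|\tilde v_\eps)\sim v^{-\gamma+1}$, so I would sacrifice a fraction of $\mathcal{D}(U)=\int a |\partial_\xi(p(v)-p(\tilde v_\eps))|^2$ as a Poincar\'e-type device to propagate $L^\infty$ control on $p(v)-p(\tilde v_\eps)$ and close the bound. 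The global estimates of Lemmas \ref{lem-pro} and \ref{lem-pro2} (particularly \eqref{rel_Q} for large $v$ and \eqref{pressure4} globally) are the right tools.

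\textbf{Step 2 (Reduction to a bounded scalar truncation).} In the complementary regime $\mathcal{D}(U)\le \eps^2/\lambda$, I would decompose $Y(U)=Y_g(v)+Y_b(U)$ where $Y_b$ carries all $h$-dependence, and absorb $Y_b(U)^2$ into $\mathcal{G}_1(U)$ via Cauchy--Schwarz using the structure of the first term of $\mathcal{G}$ in \eqref{badgood}. The tail region $|v-\tilde v_\eps|>\delta$ (for a fixed small $\delta$, chosen independent of $\eps,\lambda$) is handled using \eqref{rel_Q1} and \eqref{pressure4} to bound tail contributions to $\mathcal{B}$ by a small multiple of $\mathcal{G}_2$ (the $\int a' Q(v|\tilde v_\eps)$ term), since on the tail $Q$ is linear in $|v-\tilde v_\eps|$ while $p(v|\tilde v_\eps)$ is at most linear. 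This reduces matters to a scalar inequality in the truncated variable $v$ satisfying $|v-\tilde v_\eps|\le\delta$ and $|Y_g(v)|\le \eps^2/\lambda$, with the penalty $1/\eps^4$ on $Y^2$ degraded to $1/(\eps\delta)$ on $Y_g^2$.

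\textbf{Steps 3--4 (Asymptotic expansion and sharp functional inequality).} On the truncated set I would expand $\mathcal{B}(v)$, $\mathcal{G}_2(v)$, $\mathcal{D}(v)$, $Y_g(v)$ in $\eps$ using the third-order Taylor expansions \eqref{p-est1}--\eqref{Q-est1} of Lemma \ref{lem:local}; since $a'$ and $a$ carry different powers of $\lambda/\eps$, the expansion must go to third order, and the key cancellation at order $\lambda/\eps$ between $\mathcal{B}$ and $\mathcal{G}_2$ must be verified explicitly using the shock profile identity \eqref{s-11} and Lemma \ref{lemma_pression}. Rescaling the layer $\xi\sim 1/\eps$ via the pressure profile $p(\tilde v_\eps)$ onto $y\in(0,1)$ leads to the nonlinear Poincar\'e-type inequality for $W\in L^2(0,1)$ stated in the introduction. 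I would then prove this by parametrizing $W=Z_1+Z_2\tilde W$ with $\int\tilde W=0$, $\int\tilde W^2=1$, using Lemma \ref{lem-poin} for the $Z_2^2$ vs.\ dissipation comparison, Lemma \ref{lem-sup} for the sharp pointwise bound controlling $\int W^3$ with constant $\theta$, and finally Lemma \ref{lem-alge} to dispatch the resulting one-variable polynomial inequality in $x=-Z_2^2-2Z_1$.

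\textbf{Main obstacle.} The tightest point is Step 2: the cancellation that allows the third-order Poincar\'e inequality of Step 4 to close requires that, when truncating the tails and absorbing $Y_b$ into $\mathcal{G}_1$, one loses only a $(1+\delta)$ factor on $\mathcal{B}$ and only a $(1-\delta)$ factor on $\mathcal{G}_2+\mathcal{D}$, with $\delta$ fixable independently of $\eps/\lambda$. Tracking these constants through the exponential tail weights from \eqref{tail}, without contaminating the sharp constant $\theta=\sqrt{5-\pi^2/3}$ that makes Lemma \ref{lem-alge} marginally true, is the delicate part of the argument; all other steps are either standard expansions or finite-dimensional algebra.
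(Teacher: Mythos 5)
Your proposal follows the paper's own proof essentially step for step: the same case split on the size of $\mathcal{D}(U)$, the same truncation at the level of $p(v)-p(\tilde v_\eps)$, the same isolation of the $h$-dependence of $Y$, the same third-order expansion with the $\lambda/\eps$ cancellation between $\mathcal{B}_2$ and $\mathcal{G}_2$, and the same reduction through the nonlinear Poincar\'e inequality to the two-variable and then one-variable polynomial problems via Lemmas \ref{lem-sup}, \ref{lem-poin} and \ref{lem-alge}. The one place where the paper is more careful than your sketch is that the $h$-dependent part of $Y$ must be split into a quadratic piece $Y_b$ (whose square is controlled by multiplying an a priori bound $|Y_b|\le C\eps^2/\lambda$ against a separate energy bound, not by Cauchy--Schwarz alone) and a linear piece $Y_l$, which is the one absorbed into $\mathcal{G}_1$ by Cauchy--Schwarz.
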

 
 Most of the rest of the paper will be dedicated to the proof of this result. We will first show how this proposition implies Theorem~\ref{thm_main}.

\subsection{Proof of Theorem~\ref{thm_main} from Proposition \ref{prop:main}}

Based on \eqref{ineq-1} and \eqref{X-def}, to get the contraction estimate, it is enough to prove that for almost every time $t>0$ 
\beq\label{contem0}
\Phi_\eps (Y(U^X)) \Big(2|\mathcal{B}(U^X)|+1 \Big) Y(U^X) +\mathcal{B}(U^X)-\mathcal{G}(U^X)\le0.
\eeq
For every $U\in \mathcal{H}$ we define 
\beq\label{rhs}
\mathcal{F}(U):=\Phi_\eps (Y(U)) \Big(2|\mathcal{B}(U)|+1 \Big)Y(U) +|\mathcal{B}(U)|-\mathcal{G}(U).
\eeq
From \eqref{Phi-d}, we have 
\beq\label{XY}
\Phi_\eps (Y) \Big(2|\mathcal{B}|+1 \Big)Y\le
\left\{ \begin{array}{ll}
     -2|\mathcal{B}|,\quad \mbox{if}~  |Y|\ge \eps^2,\\
     -\frac{1}{\eps^4}Y^2,\quad  \mbox{if}~ |Y|\le \eps^2. \end{array} \right.
\eeq
Hence,   for all $U\in \mathcal{H}$ satisfying  $|Y(U)|\ge \eps^2 $, we have 
$$
\mathcal{F}(U) \le -|\mathcal{B}(U)|-\mathcal{G}(U) \le 0.
$$
Using both (\ref{XY}) and Proposition \ref{prop:main}, we find that for all $U\in \mathcal{H}$ satisfying  $|Y(U)|\le \eps^2 $, 
$$
\mathcal{F}(U) \le -\delta_0\left(\frac{\eps}{\lambda}\right)|\mathcal{B}(U)| \le 0.
$$
 Since $\delta_0 <1/2$,   these two estimates show that for every $U\in \mathcal{H}$ we have 
 $$
\mathcal{F}(U) \le -\delta_0\left(\frac{\eps}{\lambda}\right)|\mathcal{B}(U)|.
$$
For every fixed $t>0$, using this estimate with $U=U^X(t,\cdot)$, together with  \eqref{ineq-1}, and \eqref{contem0} gives 
\beq\label{111}
\frac{d}{dt}\int_{\bbr} a\eta(U^X|\tilde U_\eps) d\xi \le \mathcal{F}(U^X) \le -\delta_0\left(\frac{\eps}{\lambda}\right)|\mathcal{B}(U^X)|.
\eeq
Thus, $\frac{d}{dt}\int_{\bbr} a\eta(U^X|\tilde U_\eps) d\xi \le0$, which completes \eqref{cont_main}.\\
Moreover, since it follows from \eqref{111} that
\[
\delta_0\left(\frac{\eps}{\lambda}\right)\int_0^{\infty}|\mathcal{B}(U^X)|dt \le  \int_{\bbr} \eta(U_0|\tilde U_\eps) d\xi <\infty \quad\mbox{by the initial condition},
\]
using \eqref{X-def} and $\|\Phi_\eps\|_{L^{\infty}(\bbr)}\le \frac{1}{\eps^2}$ by \eqref{Phi-d}, we have
\beq\label{112}
|\dot X|\le \frac{1}{\eps^2} + \frac{2}{\eps^2}|\mathcal{B}|, \quad  \|\mathcal{B}\|_{L^1(0,\infty)}\le \frac{1}{\delta_0}\frac{\lambda}{\eps}\int_{\bbr} \eta(U_0|\tilde U_\eps) d\xi.
\eeq
This provides the global-in-time estimate \eqref{est-shift1}, thus $X\in W^{1,1}_{loc}(\bbr^+)$. This completes the proof of Theorem \ref{thm_main}.
\vskip0.1cm The rest of the paper is dedicated to the proof of Proposition \ref{prop:main}.

\subsection{An estimate on specific polynomials}

Let $\theta:=\sqrt{5-\frac{\pi^2}{3}}$, and $\delta>0$ be any constant.
We consider the following polynomial functionals:
\begin{eqnarray*}
&& E(\zo,\zt):=\zo^2+\zt^2+2\zo,\\
&&P_\delta(\zo,\zt):=(1+\delta)( Z_1^2 +Z_2^2) +2Z_1Z_2^2 +\frac{2}{3}Z_1^3  +6\delta (|Z_1|Z_2^2 +|Z_1|^3) \\
&&\qquad\qquad\qquad -2\left(1-\delta-\Big(\frac{2}{3}+\delta\Big)\theta Z_2\right)Z_2^2.
\end{eqnarray*}

This section is dedicated to the proof of the following proposition.
\begin{proposition}\label{prop:algebra}
There exist  $\delta_0, \delta_1>0$ such that for any $0<\delta<\delta_0$, the following is true.\\
If $(\zo,\zt)\in \bbr^2$ satisfies $ |E(\zo,\zt)|\leq \delta_1$, then
\beq\label{Pclaim}
 P_\delta(\zo,\zt)-|E(\zo,\zt)|^2\leq 0.
\eeq
\end{proposition}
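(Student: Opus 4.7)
The plan is to reduce Proposition \ref{prop:algebra} to Lemma \ref{lem-alge}, which already resolves the critical inequality along the curve $\{E=0\}$, and then to extend by a Taylor expansion near the origin together with a compactness/continuity argument on the rest of the constraint set.

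First I would work directly on the circle $\{E=0\}$, where $Z_2^2 = -Z_1(Z_1+2)$ and $Z_1 \in [-2,0]$. Expanding the last term of $P_\delta$ at $\delta = 0$ gives
$$P_0(Z_1,Z_2) = Z_1^2 - Z_2^2 + 2 Z_1 Z_2^2 + \tfrac{2}{3} Z_1^3 + \tfrac{4}{3}\theta Z_2^3.$$
Substituting $Z_2^2 = -Z_1(Z_1+2)$ and using the worst-case bound $Z_2^3 \leq |Z_2|^3 = (-Z_1^2-2Z_1)^{3/2}$ (allowed since $\theta>0$) yields
$$P_0(Z_1,Z_2) \leq 2Z_1 - 2Z_1^2 - \tfrac{4}{3}Z_1^3 + \tfrac{4\theta}{3}(-Z_1^2 - 2Z_1)^{3/2},$$
which is exactly the polynomial expression controlled by Lemma \ref{lem-alge}. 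That lemma provides strict negativity for $Z_1 \in [-2,0)$, so $P_0 < 0$ on $\{E=0\}$ except at the point $(Z_1,Z_2)=(0,0)$, where $P_0$ also vanishes.

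The origin must be handled separately. A Taylor expansion of the definitions of $E$ and $P_\delta$ gives, uniformly for $\delta$ small,
$$E^2 = 4 Z_1^2 + O(|Z|^3), \qquad P_\delta = (1+\delta) Z_1^2 + (3\delta-1) Z_2^2 + O(|Z|^3),$$
so that $P_\delta - E^2 = (\delta-3)Z_1^2 + (3\delta-1)Z_2^2 + O(|Z|^3)$. For $\delta$ small the quadratic form is negative definite, which fixes a radius $\rho_0>0$ and a threshold $\delta_0>0$, both independent of $\delta_1$, such that $P_\delta - E^2 \leq -\tfrac{1}{2}(Z_1^2+Z_2^2)$ on $\{|Z| \leq \rho_0\}$.

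For the complementary region, observe that $\{|E|\leq \delta_1\}$ is automatically compact once $\delta_1<1$, since $|E|\leq \delta_1$ forces $(Z_1+1)^2+Z_2^2 \leq 1+\delta_1$. On the compact subset $\{E=0\}\cap\{|Z|\geq \rho_0\}$, which avoids the origin, the first step yields a uniform bound $P_0 \leq -c(\rho_0)<0$. By joint continuity of $P_\delta - E^2$ in $(Z_1,Z_2,\delta)$ on the compact region, I can then shrink $\delta_0$ and $\delta_1$ depending on $\rho_0$ so that the strict negativity propagates to $\{|E|\leq \delta_1,|Z|\geq \rho_0\}$. Combining with the near-origin estimate finishes the proof. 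The main obstacle is logistical rather than analytic: the constants must be fixed in the correct order ($\rho_0$ from the Taylor analysis first, and then $\delta_0,\delta_1$ sufficiently small in terms of $\rho_0$), and Lemma \ref{lem-alge} is the single ingredient that makes this two-scale matching work at all, since it supplies the strict negativity along the entire curve $\{E=0\}$ away from the origin.
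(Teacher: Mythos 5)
Your proposal is correct and follows essentially the same two-region strategy as the paper: a quadratic/Taylor analysis near the origin (your expansion $E^2=4Z_1^2+O(|Z|^3)$ is just a restatement of the paper's bound $-|E|^2\le -2Z_1^2+r^2(Z_1^2+Z_2^2)$), and away from the origin the reduction of $P_0$ on $\{E=0\}$ to the polynomial of Lemma \ref{lem-alge} followed by compactness and continuity in $(Z_1,Z_2,\delta)$. The constants are fixed in the same order as in the paper, so no gap remains.
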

This proposition will be used when a smallness condition on the perturbation, due to the shift, will be available. It should be noticed that the expansion leading to this polynomial is not merely a linearization. We end up with a polynomial $P_\delta$ which is of order 3. 
\vskip0.3cm
\begin{proof}
We split the proof into three steps.
\vskip0.1cm \noindent{\bf Step 1.} For $r>0$, we denote $B_{r}(0)$ the open ball centered at the origin with radius $r$.  We show the following claim: There exist $r>0$ and  $\delta_0>0$ such that for any $\delta\leq \delta_0$,
\begin{equation}\label{step1}
P_\delta(\zo,\zt)-|E(\zo,\zt)|^2\leq 0, \mathrm{ \ whenever \ }(Z_1,Z_2)\in B_{r}(0).
\end{equation}

To prove the claim, notice first that  $|Z_1|, |Z_2|\le r$ on $B_{r}(0)$. So we have
\[
|2Z_1|^2=(E-(Z_1^2+Z_2^2))^2\le 2|E|^2 + 2|Z_1^2+Z_2^2|^2 \le 2|E|^2 + 2r^2 (Z_1^2+Z_2^2),
\]
which implies
\[
-|E|^2 \le -2Z_1^2 +r^2 (Z_1^2+Z_2^2).
\]
Thus, for any $(Z_1,Z_2)\in B_{r}(0)$,
\begin{eqnarray*}
&& P_\delta-|E|^2 \le -2Z_1^2 +(1+\delta)\Big(Z_1^2+Z_2^2 +\frac{r^2}{1+\delta}(Z_1^2+Z_2^2)+ \frac{(2+6\delta)r}{1+\delta}Z_2^2 + \frac{((2/3)+6\delta)r}{1+\delta} Z_1^2\Big) \\
&&\qquad\qquad\qquad  -2\left(1-\delta-\frac{2}{3}(1+\delta)\theta r \right)\zt^2.
\end{eqnarray*}

Taking $\delta_0$ and $r$ small enough, we can ensure that for any $\delta<\delta_0$,
\[
P_\delta(\zo,\zt)-|E|^2 \le 0, \qquad \mathrm{on} \ B_r(0).
\]
This proves the claim (\ref{step1}).

\vskip0.1cm\noindent {\bf Step 2.}
The second step is dedicated to the proof of the following claim.
There exists $\delta_0>0$ (possibly smaller that in the step 1), and $\delta_1>0$ such that for any $0<\delta\leq \delta_0$  we have :
\begin{equation}\label{step2}
P_\delta(\zo,\zt)< 0, \qquad \mathrm{whenever \ }  |E(\zo,\zt)|\leq \delta_1, \qquad \mathrm{and \ }(Z_1,Z_2)\notin B_{r}(0).
\end{equation}

To show this claim, we first observe the limiting case: if $\delta=0$ and $E(Z_1,Z_2)=0$, we have
\[
P_0(Z_1,Z_2)=2Z_1-2Z_1^2-\frac{4}{3}Z_1^3 +\frac{4\theta}{3} \Big(-Z_1^2 -2Z_1 \Big)^{3/2},\quad Z_1^2+Z_2^2+2Z_1=0.
\]
Since $(Z_1+1)^2+Z_2^2=1$ by $E=0$, we have $-2\le Z_1\le 0$. Then by the algebraic inequality in Lemma \ref{lem-alge}, we have
\[
P_0(Z_1,Z_2)<0,\quad Z_1^2+Z_2^2+2Z_1=0,~ Z_1\neq 0.
\]
Since $P_0$ is continuous, it attains its maximum $-c<0$ on the compact set $\{E(Z_1,Z_2)=0\} \setminus B_r(0)$. In addition, $P_0$ is uniformly continuous on the compact set $\{|E(Z_1,Z_2)|\leq 1\}\setminus B_r(0)$, so there exist $0<\delta_1<1$ such that 
\[
P_0(Z_1,Z_2)<-c/2,\quad \mathrm{whenever \ } |E(Z_1,Z_2)|\leq \delta_1,~\mathrm{and \ } (Z_1,Z_2)\notin B_r(0).
\]
Taking $\delta_0$ small enough we still have for $\delta<\delta_0$: 
\[
P_\delta(Z_1,Z_2)<0,\quad \mathrm{whenever \ } |E(Z_1,Z_2)|\leq \delta_1,~ \mathrm{and \ }(Z_1,Z_2)\notin B_r(0).
\]
This proves the claim (\ref{step2}).
\vskip0.1cm\noindent{\bf Step 3.} The proved claims (\ref{step1}) and  (\ref{step2}) together give the Proposition  \ref{prop:algebra}.
\end{proof}

\subsection{A nonlinear Poincar\'e type inequality}

For any $\delta>0$, and any function $W\in L^2(0,1)$ such that 
$\sqrt{y(1-y)}\partial_yW\in L^2(0,1)$, we define
\begin{eqnarray*}
&&\Rd(W)=-\frac{1}{\delta}\left(\int_0^1W^2\,dy+2\int_0^1 W\,dy\right)^2+(1+\delta)\int_0^1 W^2\,dy\\
&&\qquad\qquad+\frac{2}{3}\int_0^1 W^3\,dy +\delta \int_0^1 |W|^3\,dy  -(1-\delta)\int_0^1 y(1-y)|\partial_y W|^2\,dy.
\end{eqnarray*}
This section is dedicated to the proof of the following proposition.
\begin{proposition}\label{prop:W}
For a given $C_1>0$, there exists $\deltat>0$, such that for any $\delta<\deltat$ the following is true.\\ For any $W\in L^2(0,1)$ such that 
$\sqrt{y(1-y)}\partial_yW\in L^2(0,1)$, if $\int_0^1 |W(y)|^2\,dy\leq C_1$, then
\beq\label{pWc}
 \Rd(W)\leq0.
\eeq
\end{proposition}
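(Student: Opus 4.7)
The plan is to reduce the functional inequality to the two-variable polynomial inequality already established in Proposition~\ref{prop:algebra}. For $W$ in the stated class I introduce the three scalars $Z_1 := \int_0^1 W\, dy$, $Z_2 := (\int_0^1 (W-Z_1)^2\,dy)^{1/2}$, and $D := \int_0^1 y(1-y)|\partial_y W|^2\, dy$. Then $\int W^2 = Z_1^2 + Z_2^2$ and $\int W = Z_1$, so in particular $\int W^2 + 2\int W = E(Z_1, Z_2)$. Expanding $W = Z_1 + (W-Z_1)$ in $\int W^3$ and $\int |W|^3$, and bounding $\int|W-Z_1| \leq Z_2$ by Cauchy--Schwarz, the whole functional $\Rd(W)$ is expressed in $Z_1, Z_2, D$ and a single remaining cubic quantity $\int |W-Z_1|^3\,dy$.

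The two inputs from the preliminary section control this cubic integral and the dissipation: Lemma~\ref{lem-poin} gives $Z_2^2 \leq D/2$, and Lemma~\ref{lem-sup} gives the pointwise estimate $|W(y)-Z_1|^2 \leq (L(y) + L(1-y))\,D$. Together with the identity $\int_0^1 (L(y)+L(1-y))^2\,dy = \theta^2$, this produces $\|W-Z_1\|_{L^4}^2 \leq \theta D$, whence H\"older gives $\int |W-Z_1|^3 \leq Z_2\,\theta D$. Plugging in and using the elementary AM-GM bound $3\delta Z_1^2 Z_2 \leq \tfrac{3\delta}{2}(|Z_1|^3 + |Z_1|Z_2^2)$ to pool the only cross term that does not fit the template, the coefficient of $D$ in the resulting bound is precisely $(\tfrac{2}{3}+\delta)\theta Z_2 - (1-\delta)$.

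In the main regime $Z_2 < c_\delta := (1-\delta)/((\tfrac{2}{3}+\delta)\theta)$, this coefficient is non-positive, and the Poincar\'e inequality $D \geq 2Z_2^2$ trades the residual $-[\,\cdot\,]\,D$ term for a multiple of $-Z_2^2$; after collecting everything one obtains the clean inequality
\[
\Rd(W) \;\leq\; -\frac{1}{\delta}\,E(Z_1, Z_2)^2 + P_\delta(Z_1, Z_2).
\]
If $|E| \leq \delta_1$, Proposition~\ref{prop:algebra} gives $P_\delta \leq |E|^2$, hence $\Rd \leq (1-1/\delta)E^2 \leq 0$ for $\delta < 1$. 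If $|E| > \delta_1$, the hypothesis $\int W^2 \leq C_1$ controls $|Z_1|, Z_2 \leq \sqrt{C_1}$, so $P_\delta$ is bounded in terms of $C_1$, whereas $-\delta^{-1}E^2 \leq -\delta_1^2/\delta$ dominates once $\delta$ is small enough (depending on $C_1$).

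The hard part will be the complementary range $Z_2 \geq c_\delta$: there the crude bound $\int|W-Z_1|^3 \leq \theta Z_2 D$ yields a positive coefficient of $D$ that cannot be controlled by $-(1-\delta)D$. The saving grace is that $c_\delta \to 3/(2\theta) > 1$ as $\delta \to 0$, so $E = (Z_1+1)^2 + Z_2^2 - 1 \geq c_\delta^2 - 1 > 0$ gives a uniform positive lower bound on $|E|$. I plan to upgrade the cubic estimate to $\int |W-Z_1|^3 \leq C\,Z_2^{3/2}\,D^{3/4}$ by a layer-cake argument: on the sublevel set $\{|W-Z_1|\leq M\}$ one has the trivial bound $M Z_2^2$, while on the tail Chebyshev gives $|\{|W-Z_1|>M\}| \leq Z_2^2/M^2$, and combining this with the pointwise control of Lemma~\ref{lem-sup} and the convergence $C_L := \int_0^1 (L+L)^3\,dy < \infty$ bounds the tail contribution by $\sqrt{C_L}\, Z_2\, D^{3/2}/M$; optimizing in $M$ gives the desired sublinear-in-$D$ estimate. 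Young's inequality $Z_2^{3/2}\,D^{3/4} \leq \varepsilon D + C_\varepsilon Z_2^6$ with small $\varepsilon$ lets the $-(1-\delta)D$ term absorb the $D$ contribution, and $Z_2^6 \leq C_1^3$ is bounded. Combined with $|E|^2 \geq (c_\delta^2-1)^2 > 0$ this forces $\Rd \leq -\delta^{-1}(c_\delta^2-1)^2 + K(C_1) \leq 0$ for $\delta$ small enough depending on $C_1$, completing the proof.
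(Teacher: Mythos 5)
Your proposal is correct and follows essentially the same route as the paper's proof: the same reduction to the variables $Z_1,Z_2$, the same use of Lemma \ref{lem-sup}, the Poincar\'e inequality of Lemma \ref{lem-poin}, and Proposition \ref{prop:algebra}, and the same dichotomy (which you organize by a threshold on $Z_2$ rather than on $|E(Z_1,Z_2)|$; this is equivalent since $Z_2^2\leq 1+|E|$ and, conversely, a large $Z_2$ forces $E>0$ bounded below). The only cosmetic difference is your layer-cake/Chebyshev derivation of $\int_0^1|W-Z_1|^3\,dy\lesssim Z_2^{3/2}D^{3/4}$, which the paper obtains in one line by writing $|W-Z_1|^3\leq (L(y)+L(1-y))^{3/4}D^{3/4}|W-Z_1|^{3/2}$ and applying H\"older with exponents $(4,4/3)$.
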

Note that the constant $C_1$ may not be small. Therefore we cannot discard the cubic term in $\mathcal{R}_\delta(W)$.

\begin{proof}
Let $\overline W =\int_0^1 W dy$. We first separate the first cubic term in $\mathcal{R}_\delta$ into the three parts:
\begin{align}
\begin{aligned}\label{cubic-form}
\int_0^1 W^3 dy &= \int_0^1 \Big((W-\overline W)+\overline W\Big)^3 dy\\
&=\int_0^1 (W-\overline W)^3 dy +3\overline W\int_0^1 (W-\overline W)^2 dy + \int_0^1\overline W^3 dy\\
&=\int_0^1 (W-\overline W)^3 dy + 2\overline W\int_0^1 (W-\overline W)^2 dy + \overline W \int_0^1 W^2 dy.
\end{aligned}
\end{align}
Thus, we have
\begin{align}
\begin{aligned}\label{rw0}
\Rd(W)&=-\frac{1}{\delta}\left(\int_0^1W^2\,dy+2\int_0^1 W\,dy\right)^2 +(1+\delta) \int_0^1 W^2 dy+ \frac{4}{3} \overline W\int_0^1 (W-\overline W)^2 dy\\
&+  \frac{2}{3}  \overline W \int_0^1 W^2 dy  +  \frac{2}{3}\int_0^1 (W-\overline W)^3 dy +\delta \int_0^1 |W|^3 dy
-(1-\delta)\int_0^1 y(1-y) |\partial_y W|^2 dy.
\end{aligned}
\end{align}
Let
\[
Z_1:=\overline W,\quad Z_2:= \Big(\int_0^1 (W-\overline W)^2 dy \Big)^{\frac{1}{2}},\quad E(Z_1,Z_2)=Z_1^2+Z_2^2+2Z_1.
\]
In what follows, we rewrite $\mathcal{R}_\delta$ in terms of the new variables $Z_1$ and $Z_2$.\\
Since 
\[
 \int_0^1 W^2 dy=Z_1^2+Z_2^2,
\]
and
\begin{align*}
\begin{aligned}
\int_0^1 |W|^3 dy &\le \int_0^1 \Big(|W-\overline W|+|\overline W|\Big)^3 dy\\
&\le\int_0^1 |W-\overline W|^3 dy +3|\overline W|\int_0^1 |W-\overline W|^2 dy+3|\overline W|^2 \int_0^1 |W-\overline W| dy + |\overline W|^3 \\
&\le\int_0^1 |W-\overline W|^3 dy  + 3|Z_1|Z_2^2 + 3|Z_1|^{3/2}|Z_1|^{1/2}Z_2 +|Z_1|^3\\
&\le\int_0^1 |W-\overline W|^3 dy  + 6|Z_1|Z_2^2 +4|Z_1|^3,
\end{aligned}
\end{align*}
we have 
\begin{align}
\begin{aligned}\label{fr2}
\mathcal{R}_\delta&=-\frac{1}{\delta} |E(Z_1,Z_2)|^2 + (1+\delta)( Z_1^2 +Z_2^2) +2Z_1Z_2^2 +\frac{2}{3}Z_1^3  +6\delta (|Z_1|Z_2^2 +|Z_1|^3) + \mathcal{P},
\end{aligned}
\end{align}
where
\beq\label{pterm}
\mathcal{P}:= \Big(\frac{2}{3}+\delta\Big)\int_0^1 |W-\overline W|^3 dy -(1-\delta)\int_0^1 y(1-y) |\partial_y W|^2 dy.
\eeq
For the cubic term in $\mathcal{P}$, we use Lemma \ref{lem-sup} to estimate
\begin{align}
\begin{aligned}\label{K_1}
& \int_{0}^1 \Big|W-\int_0^1 W\Big|^3 dy \\
&\le \int_0^1 z(1-z)|\partial_z W|^2 dz  \int_0^1|L(y)+L(1-y)| \Big|W-\int_0^1 W\Big|dy \\
&\le \int_0^1 z(1-z)|\partial_z W|^2 dz\Big(\int_0^1(L(y)+L(1-y))^{2}dy\Big)^{1/2}\Big(\int_0^1 \Big|W-\int_0^1 W\Big|^{2} dy\Big)^{1/2} \\
&= \theta Z_2\int_0^1 y(1-y)|\partial_y W|^2 dy.
\end{aligned}
\end{align}
Thus, we have
\[
\mathcal{P}\le -\left(1-\delta-\Big(\frac{2}{3}+\delta\Big)\theta Z_2\right)\int_0^1 y(1-y)|\partial_y W|^2 dy.
\]
Since $(Z_1+1)^2  +Z_2^2 = 1+E(Z_1,Z_2)$, we have
\[
Z_2\le \sqrt{1+|E(Z_1,Z_2)|}.
\]
Since $\frac{2}{3}\theta=\frac{2}{3}\sqrt{5-\frac{\pi^2}{3}}\approx 0.88<1$, there exists a positive constant $\delta_\theta<1$ such that
\[
\frac{2}{3}\theta\sqrt{1+\delta_\theta} <1.
\]
Then, we take $\delta_2<1$ such that $\forall \delta<\delta_2$,
\[
1-\delta -\Big(\frac{2}{3}+\delta\Big) \theta\sqrt{1+\delta_\theta} >0.
\] 
We consider now two cases, whether $|E(Z_1,Z_2)|\le \min\{\delta_\theta, \delta_1\}$, or $|E(Z_1,Z_2)|\ge \min\{\delta_\theta, \delta_1\}$, where $\delta_1$ is the constant as in Proposition \ref{prop:algebra}. 
\vskip0.2cm
\noindent{\bf Case 1:}  Assume that 
\beq\label{ass1}
|E(Z_1,Z_2)|\le \min\{\delta_\theta, \delta_1\}.
\eeq
Then we find that $\forall \delta<\delta_2$,
\begin{eqnarray*}
1-\delta-\Big(\frac{2}{3}+\delta\Big)\theta Z_2 &\ge&1-\delta- \Big(\frac{2}{3}+\delta\Big)\theta \sqrt{1+\min\{\delta_\theta, \delta_1\}} \\
& \ge&1-\delta- \Big(\frac{2}{3}+\delta\Big)\theta \sqrt{1+\delta_\theta} >0.
\end{eqnarray*}
Therefore, we use the weighted Poincar\'e inequality \eqref{poincare} to have
\[
\mathcal{P}\le -2\left(1-\delta-\Big(\frac{2}{3}+\delta\Big)\theta Z_2\right)Z_2^2.
\]

Therefore, we have
\begin{align*}
\begin{aligned}
\mathcal{R}_\delta&\le -\frac{1}{\delta} |E(Z_1,Z_2)|^2 + (1+\delta)( Z_1^2 +Z_2^2) +2Z_1Z_2^2 +\frac{2}{3}Z_1^3  +6\delta (|Z_1|Z_2^2 +|Z_1|^3) \\
&\qquad -2\left(1-\delta-\Big(\frac{2}{3}+\delta\Big)\theta Z_2\right)Z_2^2\\
&=-\frac{1}{\delta} |E(Z_1,Z_2)|^2 +P_\delta(Z_1,Z_2).
\end{aligned}
\end{align*}
Hence, taking $\delta_2<\min\{\delta_0,1\}$ where $\delta_0$ is the constant as in Proposition \ref{prop:algebra}, and using Proposition \ref{prop:algebra} with \eqref{ass1}, we have $\mathcal{R}_\delta\le0$ for all $\delta<\delta_2$ under the assumption \eqref{ass1}.

\vskip0.2cm
\noindent{\bf Case 2.}  Assume now that 
\[
|E(Z_1,Z_2)|\ge \min\{\delta_\theta, \delta_1\}.
\]
We now use the assumption
\[
\int_0^1 |W(y)|^2\,dy\leq C_1,
\]
from which, all bad terms except for $\int_0^1(W-\overline W)^3dy$ in \eqref{rw0} are bounded by some constant $\tilde C_1$ depending on $C_1$. Therefore, we have
\begin{align*}
\begin{aligned}
\Rd \le -\frac{1}{\delta}\min\{\delta_\theta, \delta_1\}^2 +\tilde C_1  +  \frac{2}{3}(1+\delta)\int_0^1 (W-\overline W)^3 dy-(1-\delta)\int_0^1 y(1-y) |\partial_y W|^2 dy.
\end{aligned}
\end{align*}
For the remaining cubic term, we use Lemma \ref{lem-sup} to have
\begin{align*}
\begin{aligned}
& \int_{0}^1 (W-\overline W)^3 dy \\
&\quad\le \Big(\int_0^1 y(1-y)|\partial_y W_1|^2 dy \Big)^{3/4}  \int_0^1\Big|L(y)+L(1-y)\Big|^{3/4} \Big|W_1-\int_0^1 W_1\Big|^{3/2} dy\\
&\quad\le \Big(\int_0^1 y(1-y)|\partial_y W_1|^2 dy \Big)^{3/4}  \Big(\int_0^1\Big|L(y)+L(1-y)\Big|^{3}dy\Big)^{1/4}\Big(\int_0^1 \Big|W_1-\int_0^1 W_1\Big|^{2} dy\Big)^{3/4}.
\end{aligned}
\end{align*}
Then, using Young's inequality, we have
\begin{align*}
\begin{aligned}
\frac{2}{3}\int_0^1 (W-\overline W)^3 dy &\le \frac{1}{2} \int_0^1 y(1-y)|\partial_y W_1|^2 dy +C \Big(\int_0^1 \Big|W_1-\int_0^1 W_1\Big|^{2} dy\Big)^{3}\\
 &\le \frac{1}{2} \int_0^1 y(1-y)|\partial_y W_1|^2 dy +\tilde C_1,
\end{aligned}
\end{align*}
Therefore, 
\[
\mathcal{R}_\delta \le -\frac{1}{\delta}\min\{\delta_\theta, \delta_1\}^2 +2\tilde C_1.
\]
Hence, choosing $\delta_2<\min\{\delta_0,1\}$ small enough such that $-\frac{1}{\delta_2}\min\{\delta_\theta, \delta_1\}^2 +2\tilde C_1< 0$, we have $\mathcal{R}_\delta < 0$.\\
This completes the proof of Proposition \ref{prop:algebra}.
\end{proof}

\subsection{Expansion in the size of the shock}\label{section-expansion}

We define the following functionals:
\begin{align*}
\begin{aligned}
&Y_g(v):=-\frac{1}{2\sigma_\eps^2}\int_\bbr a' |p(v)-p(\tilde v_\eps)|^2 d\xi -\int_\bbr a' Q(v|\tilde v_\eps) d\xi -\int_\bbr a \partial_\xi p(\tilde v_\eps)(v-\tilde v_\eps)d\xi\\
&\quad+\frac{1}{\sigma_\eps}\int_\bbr a \partial_\xi \tilde h_\eps\big(p(v)-p(\tilde v_\eps)\big)d\xi,\\
&\mathcal{B}_1(v):= \sigma_\eps\int_\bbr a \partial_\xi \tilde v_\eps p(v|\tilde v_\eps) d\xi,\\
&\mathcal{B}_2(v):= \frac{1}{2\sigma_\eps} \int_\bbr a' |p(v)-p(\tilde v_\eps)|^2d\xi+ \frac{1}{2}\int_\bbr a'' |p(v)-p(\tilde v_\eps)|^2 d\xi,\\
&\mathcal{G}_2(v):=\sigma_\eps  \int_\bbr  a' Q(v|\tilde v_\eps) d\xi, \\
&\mathcal{D}(v):=\int_\bbr a |\partial_\xi (p(v)-p(\tilde v_\eps))|^2 d\xi.
\end{aligned}
\end{align*}

Note that all these quantities depend only on $v$ (not on $h$).
This section is dedicated to the proof of the following proposition.

\begin{proposition}\label{prop:main3}
For any $C_2>0$, there exist $\eps_0, \delta_3>0$, such that for any $\eps\in(0,\eps_0)$, and any $\lambda, \delta\in(0,\delta_3)$ such that $\eps\leq \lambda$,
 the following is true.\\
For any function $v:\bbr\to \bbr^+$ such that $\mathcal{D}(v)+\mathcal{G}_2(v)$
 is finite, if
\beq\label{assYp}
|Y_g(v)|\leq C_2 \frac{\eps^2}{\lambda},\qquad  \|p(v)-p(\vt)\|_{L^\infty(\bbr)}\leq \delta_3,
\eeq
then
\begin{align}
\begin{aligned}\label{redelta}
&\mathcal{R}_{\eps,\delta}(v):=-\frac{1}{\eps\delta}|Y_g(v)|^2 +(1+\delta)|\mathcal{B}_1(v)|\\
&\qquad\qquad\qquad+\left(1+\delta\left(\frac{\eps}{\lambda}\right)\right)|\mathcal{B}_2(v)|-\left(1-\delta\left(\frac{\eps}{\lambda}\right)\right)\mathcal{G}_2(v)-(1-\delta)\mathcal{D}(v)\le 0.
\end{aligned}
\end{align}

\end{proposition}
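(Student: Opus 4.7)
\medskip

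\noindent\textbf{Proof plan for Proposition \ref{prop:main3}.}

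The strategy is to reduce the inequality $\mathcal{R}_{\eps,\delta}(v) \leq 0$ to the nonlinear Poincar\'e-type inequality of Proposition \ref{prop:W}, via a change of variable that parametrizes the shock profile by $y\in(0,1)$ together with a rescaling of the perturbation. The $L^\infty$ smallness $\|p(v)-p(\tilde v_\eps)\|_\infty \leq \delta_3$ permits a uniform Taylor expansion in $P:=p(v)-p(\tilde v_\eps)$ via Lemma \ref{lem:local}, and the shock-profile estimates from \eqref{tail}, \eqref{s-11}, Lemma \ref{lemma_pression}, together with \eqref{shock_speed}, let us replace $p(\tilde v_\eps)$, $p'(\tilde v_\eps)$ and $\sigma_\eps^2$ by their values at $v_-$ modulo explicit $O(\eps)$ remainders.

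First I would introduce $y=(p(\tilde v_\eps(\xi))-p(v_-))/\eps$, so that by \eqref{weight-a} $a(\xi)=1-\lambda y$, $a'\,d\xi=-\lambda\,dy$, and $\partial_\xi p(\tilde v_\eps)\,d\xi=\eps\,dy$. Using \eqref{s-11} together with the Taylor expansion of $v$ in $p$ around $p_-$ (and $\sigma_\eps^2=-p'(v_-)+O(\eps)$), one gets the crucial identification
\[
\partial_\xi p(\tilde v_\eps) \;\approx\; \tfrac{\gamma+1}{2\gamma p(v_-)|\sigma_\eps|}\,\eps^2\,y(1-y),
\]
which produces the Poincar\'e weight $y(1-y)$. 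Set $W(y):=(\lambda/\eps)P(\xi(y))$ and expand each functional using Lemma \ref{lem:local} (for $p(v|\tilde v_\eps)$ and $Q(v|\tilde v_\eps)$ up to cubic order) and the Taylor expansion $v-\tilde v_\eps=v'(p(\tilde v_\eps))P+\tfrac12 v''(p(\tilde v_\eps))P^2+O(P^3)$. The rescaling makes $\mathcal{D}(v),\mathcal{B}_1(v),\mathcal{G}_2(v)-|\mathcal{B}_2(v)|$ and $Y_g(v)$ all of the common order $\eps^3/\lambda^2$ in terms of $\int_0^1 W^n\,dy$ and $\int_0^1 y(1-y)|\partial_y W|^2\,dy$.

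The next step is to verify the key cancellation: using $\sigma_\eps^2=-p'(v_-)+O(\eps)$ and $-p'(v_-)=\gamma p(v_-)^{(\gamma+1)/\gamma}$, the leading $O(\lambda/|\sigma_\eps|)\int P^2\,dy$ part of $|\mathcal{B}_2(v)|$ coincides (up to $O(\eps)$) with the leading quadratic Taylor coefficient in $\mathcal{G}_2(v)$, so the $O(\eps^2/\lambda)\int W^2$ contributions cancel. What survives at the next order $\eps^3/\lambda^2$ are: the cubic from $\mathcal{G}_2$, which contributes $+\tfrac{2K}{3}\int W^3\,dy$ once one checks the algebraic identity $\tfrac{(\gamma+1)|\sigma_\eps|}{3\gamma^2}p(v_-)^{-1/\gamma-2}=\tfrac{2K}{3}$ with $K=\tfrac{\gamma+1}{2\gamma p(v_-)|\sigma_\eps|}$; the term $|\mathcal{B}_1(v)|$, which contributes $+K\int W^2\,dy$ with the same constant $K$; the term $\mathcal{D}(v)$, which contributes $-K\int y(1-y)|\partial_y W|^2\,dy$; and the constraint $-|Y_g|^2/(\eps\delta)$, which after expansion gives $-\frac{1}{\delta|p'(v_-)|^2}\eps^3/\lambda^2\,(\int W^2+2\int W)^2$. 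Extracting the overall factor $K\,\eps^3/\lambda^2$, the inequality $\mathcal{R}_{\eps,\delta}(v)\leq 0$ reduces to $\mathcal{R}_{\delta'}(W)\leq 0$ for some $\delta' = c(\gamma,v_-)\,\delta$; the bounded factor $\delta(\eps/\lambda)$ attached to $|\mathcal{B}_2|+\mathcal{G}_2$, together with all $O(\eps+\lambda+\delta_3)$ Taylor remainders, contribute exactly the $+\delta\int W^2$ and $+\delta\int|W|^3$ perturbations present in the definition of $\mathcal{R}_{\delta'}$.

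The main obstacle will be Step (6), the application of Proposition \ref{prop:W}: one must derive a uniform bound $\int_0^1 W^2\,dy\leq C_1$ from the hypotheses. This requires combining the constraint $|Y_g|\leq C_2\eps^2/\lambda$, which at leading order gives $|\int W^2+2\int W|\leq C_2/|p'(v_-)|$ plus corrections controlled by $(\eps/\lambda+\delta_3)(\int W^2+\int|W|^3)$, with the $L^\infty$ bound $\|W\|_\infty\leq \delta_3\lambda/\eps$ to absorb the $\int |W|^3$ error as $\delta_3 \lambda/\eps\,\int W^2$. Choosing $\delta_3$ small enough (independently of $\eps,\lambda$), the resulting inequality $|\int W^2+2\int W|\leq C_2'+c\int W^2$ with $c<1$ together with $|\int W|\leq(\int W^2)^{1/2}$ yields the desired uniform bound $\int W^2\leq C_1(C_2)$. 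One can then invoke Proposition \ref{prop:W} with this $C_1$ to obtain $\delta_2(C_1)$; provided $\delta_3$ is further restricted so that $c(\gamma,v_-)\,\delta_3\leq\delta_2(C_1)$, the conclusion $\mathcal{R}_{\delta'}(W)\leq 0$ holds for every $\delta<\delta_3$, and Proposition \ref{prop:main3} follows. The delicate point is that the bookkeeping of error terms must remain compatible with $\eps\leq\lambda$ without any additional smallness of $\eps/\lambda$ beyond what $\delta_3$ already provides.
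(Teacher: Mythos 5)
Your proposal is correct and follows essentially the same route as the paper's proof: the same change of variable $y=(p(\tilde v_\eps)-p(v_-))/\eps$ and rescaling $W=(\lambda/\eps)(p(v)-p(\tilde v_\eps))$, the same identification of the weight $y(1-y)$ from the profile equation via Lemma \ref{lemma_pression}, the same leading-order cancellation between $\mathcal{B}_2$ and $\mathcal{G}_2$, the same derivation of the uniform bound $\int_0^1 W^2\,dy\le C_1$ from the constraint on $Y_g$, and the same reduction to Proposition \ref{prop:W}. The only cosmetic difference is where the cubic error in the expansion of $Y_g$ is absorbed (you use $\|W\|_\infty\le\delta_3\lambda/\eps$ at the level of $W$, the paper uses $|p(v)-p(\tilde v_\eps)|\le\delta_3$ inside Lemma \ref{lem:local}); these are equivalent.
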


This proposition shows that we can afford an error of order 1 on $\mathcal{D}(v)$ and $\mathcal{B}_1(v)$ (up to $\delta$), but only of order $\eps/\lambda$ on $\mathcal{G}_2(v)$ and $\mathcal{B}_2(v)$.

\begin{proof}
We first impose on $(\delta_3, \eps_0)$ that 
$$
\delta_3\leq \min(\delta_*,1/2),\qquad \eps_0\leq \min (\delta_*, p(v_-)/2),
$$
where $\delta_*$ is defined by the Lemma \ref{lem:local}.  That way, the function $a$ is positive, 
the function $p(\tilde{v}_\eps)$ is uniformly bounded, and we can apply the results of Lemma \ref{lem:local} on  $v$ and $w=\tilde{v}_\eps$.

To simplify the notations, we set $\sigma=\sqrt{-p'(v_-)}>0$. This is a fixed quantity which does not depend on $\eps$ and $\lambda$. 
Note that from  \eqref{shock_speed}  we have 
\begin{equation}\label{sigma-f}
|\sigma+\sigma_\eps|\leq C\eps.
\end{equation}
But, since $|\tilde v_\eps-v_-|\leq C \eps$,  and $\sigma^2=-p'(v_-)=\gamma p(v_-)^{\frac{1}{\gamma}+1}$ we have actually:
\beq\label{sigma-p}
\sup_{\xi\in\bbr}|\sigma^2+p'(\tilde{v}_\eps(\xi))|\leq C \eps,
\qquad \mathrm{and \ \ }
\sup_{\xi\in\bbr}\left|\frac{1}{\sigma^2}-\frac{p(\tilde{v}_\eps(\xi))^{-\frac{1}{\gamma}-1}}{\gamma}\right|\leq C\eps.
\eeq

\vskip0.5cm
We now rewrite the above functionals $Y_g, \mathcal{B}, \mathcal{G}_2, \mathcal{D}$ w.r.t. the following variables 
\beq\label{w1w2}
w:=p(v)-p(\tilde v_\eps),\quad y:=\frac{p(\tilde v_\eps(\xi))-p(v_-)}{[p]}.
\eeq
Notice that since $p(\tilde v_\eps(\xi))$ is increasing in $\xi$, we use a change of variable $\xi\in\bbr\mapsto y\in[0,1]$.\\
Then it follows from \eqref{weight-a} that $a=1-\lambda y$ and
\beq\label{ach}
a'(\xi)=-\lambda \frac{p(\tilde v_\eps)'}{[p]},
\qquad \frac{dy}{d\xi}= \frac{p(\tilde v_\eps)'}{[p]}, \qquad |a-1|\leq \delta_3.
\eeq


$\bullet$ {\bf Change of variable for $Y_g$:} 
We decompose the $Y_g$ term as follows.
\begin{align*}
\begin{aligned}
Y_g &= \underbrace{-\frac{1}{2\sigma_\eps^2}\int_{\bbr} a' |p(v)-p(\tilde v_\eps)|^2 d\xi}_{=:Y_1} \underbrace{-\int_{\bbr} a' Q(v|\tilde v_\eps) d\xi}_{=:Y_2} \underbrace{-\int_{\bbr} a \partial_\xi p(\tilde v_\eps)(v-\tilde v_\eps)d\xi}_{=:Y_3}\\
&\quad\underbrace{+\frac{1}{\sigma_\eps}\int_{\bbr} a \partial_\xi \tilde h_\eps\big(p(v)-p(\tilde v_\eps)\big)d\xi}_{=:Y_4}.
\end{aligned}
\end{align*}
Using \eqref{ach}, we have
$$
Y_1 =\frac{\lambda}{2\sigma_\eps^2}\int_0^1 w^2 dy.
$$
Using (\ref{sigma-f}), we get
\begin{align}\label{alpha1}
\begin{aligned}
\left|Y_1 -\frac{\lambda}{2\sigma^2}\int_0^1 w^2 dy\right|\leq C\eps_0\lambda \int_0^1 w^2 dy.
\end{aligned}
\end{align}
Using   \eqref{Q-est1} and  \eqref{Q-est11} from Lemma \ref{lem:local},  and $\|p(v)-p(\vt)\|_{L^\infty(\bbr)}\leq \delta_3 $ we find
\[
\left|Y_2-\frac{\lambda}{2\gamma} \int_0^1 p(\tilde v_\eps)^{-\frac{1}{\gamma}-1} w^2 dy\right|\leq C\delta_3\lambda \int_0^1 w^2 dy. 
\]
Moreover, using (\ref{sigma-p}), we find
\begin{equation}\label{alpha2}
\left| Y_2-\frac{\lambda}{2\sigma^2}\int_0^1  w^2 dy\right|\leq C\lambda(\eps_0+\delta_3)\int_0^1  w^2 dy.
\end{equation}
For $Y_3$, we first  write 
\[
v-\tilde v_\eps =p(v)^{-\frac{1}{\gamma}}-p(\tilde v_\eps)^{-\frac{1}{\gamma}}.
\]
Using the  Taylor expansion, we find that uniformly in $\xi$ and $\eps$:
\[
\left|(v-\tilde v_\eps) +\frac{p(\tilde v_\eps)^{-\frac{1}{\gamma}-1}}{\gamma}\big(p(v)-p(\tilde v_\eps)\big) \right|\leq C|p(v)-p(\tilde v_\eps)|^2\leq C\delta_3|p(v)-p(\tilde v_\eps)|.
\]
Using \eqref{sigma-p}, we get
$$
\left|(v-\tilde v_\eps) +\frac{1}{\sigma^2}\big(p(v)-p(\tilde v_\eps)\big) \right|\leq C(\eps_0+\delta_3)|p(v)-p(\tilde v_\eps)|.
$$
Then, using $\partial_\xi p(\tilde v_\eps)=\eps\frac{dy}{d\xi}$ (since $[p]=\eps$), and $|a-1|\leq \delta_3$, we have

\begin{align}\label{alpha3}
\begin{aligned}
\left|Y_3 -\frac{\eps}{\sigma^2}\int_0^1w\,dy\right|\leq C\eps(\eps_0+\delta_3)\int_0^1  |w| dy.
\end{aligned}
\end{align}
Using $\partial_\xi \tilde h_\eps =\frac{\partial_\xi p(\tilde v_\eps)}{\sigma_\eps}$, we have
\[
Y_4 =\frac{\eps}{\sigma_\eps^2}\int_0^1 (1-\lambda y)w dy,
\]
and so 
\begin{equation}\label{alpha4}
\left| Y_4 -\frac{\eps}{\sigma^2}\int_0^1 w dy\right|\leq C\eps(\delta_3+\eps_0)\int_0^1  |w| dy.
\end{equation}
We combine all the terms of $Y_g$, and write the result on the renormalized quantity:
\beq\label{changeW}
W:=\frac{\lambda}{\eps} w.
\eeq
From \eqref{alpha1}, \eqref{alpha2}, \eqref{alpha3}, and \eqref{alpha4}, we obtain:
\begin{align}
\begin{aligned}\label{Ygc}
\left|\sigma^2 \frac{\lambda}{\eps^2 }Y_g-\int_0^1W^2\,dy-2\int_0^1 W\,dy\right|\leq C(\eps_0+\delta_3)\left(\int_0^1W^2\,dy+\int_0^1|W|\,dy\right).
\end{aligned}
\end{align}
\vskip0.3cm

$\bullet$ {\bf Change of variable for  $\mathcal{B}_1$ and $\mathcal{B}_2$:} 
We decompose the $\mathcal{B}_2$ term as follows.
\begin{align*}
\begin{aligned}
\mathcal{B} _2=  \underbrace{\frac{1}{2\sigma_\eps} \int_\bbr a' |p(v)-p(\tilde v_\eps)|^2d\xi}_{=:\mathcal{B}_{21}} +\underbrace{ \frac{1}{2}\int_\bbr a'' |p(v)-p(\tilde v_\eps)|^2 d\xi}_{=:\mathcal{ B}_{22}} .
\end{aligned}
\end{align*}
We first have
$$
\mathcal{B}_{21}=-\frac{\lambda}{2\sigma_\eps} \int_0^1  w^2dy=\frac{\lambda}{2|\sigma_\eps|} \int_0^1  w^2dy.
$$
So
$$
\left|\mathcal{B}_{21}-\frac{\lambda}{2\sigma} \int_0^1  w^2dy\right|\leq\lambda \eps \int_0^1  w^2dy\leq \eps\delta_3 \int_0^1  w^2dy.
$$
Using  \eqref{ddp}, we get
\[
|\mathcal{B}_{22}|\leq C \eps\lambda\int_0^1  w^2dy\le C \eps\delta_3\int_0^1  w^2dy.
\]
So, finally:
\beq\label{b1-est}
\left|\mathcal{B}_{2}-\frac{\lambda}{2\sigma} \int_0^1  w^2dy\right|\leq C\eps\delta_3 \int_0^1  w^2dy.
\eeq

For $\mathcal{B}_1$, using $\partial_\xi \tilde v_\eps=\frac{\partial_\xi p(\tilde v_\eps)}{p'(\tilde v_\eps)}$, we first have
\[
\mathcal{B}_1= \sigma_\eps[p] \int_0^1 (1-\lambda y) \frac{1}{p'(\tilde v_\eps)} p(v|\tilde v_\eps) dy.
\]
Then, using $[p]=\eps$, \eqref{p-est1}, $\lambda\leq \delta_3$, and \eqref{sigma-p}, 
we have
\begin{align*}
\begin{aligned}
|\mathcal{B}_1| &\le  \eps|\sigma_\eps| \int_0^1 \Big(\frac{\gamma+1}{2\gamma}  |p'(\tilde{v}_\eps)|^{-1}p(\tilde{v}_\eps)^{-1} +C \delta_3\Big) (1-\lambda y) w^2\,dy\\
&\leq \eps |\sigma_\eps| \int_0^1 \Big( \frac{\gamma+1}{2\gamma} |p'(\tilde{v}_\eps)|^{-1}p(\tilde{v}_\eps)^{-1} +C \delta_3\Big)  w^2\,dy\\
&\leq \eps |\sigma_\eps|\Big( \frac{\gamma+1}{2\gamma} |p'(v_-)|^{-1}p(v_-)^{-1} +C (\eps_0+\delta_3)\Big)  \int_0^1  w^2\,dy\\
\end{aligned}
\end{align*}
Therefore
\beq\label{b3}
|\mathcal{B}_1| \leq \eps\frac{\gamma+1}{2\gamma \sigma p(v_-)} (1+C(\eps_0+\delta_3)) \int_0^1  w^2\,dy.
\eeq
Note that the right hand side of (\ref{b1-est}) is small compared to $\mathcal{B}_1$. But the main part of $\mathcal{ B}_2$ is big compared to $\mathcal{B}_1$ (as $\lambda/\eps$). It will be compensated with the first order term in $\mathcal{G}_2$.
We denote 
$$
\alpha_\gamma = \frac{\gamma \sigma p(v_-)} {\gamma+1}.
$$
This number depends only on $v_-$ and  $\gamma$, but not on $\eps$ nor $\lambda$. 
We gather  all the terms  of $\mathcal{B}_1$ and $\mathcal{B}_2$, and write  the result on the renormalized quantity (\ref{changeW}).  Thanks to \eqref{b1-est} and \eqref{b3}  we find
\beq\label{newb2}
{2\alpha_\gamma} \frac{\lambda^2}{\eps^3}|\mathcal{B}_2|\leq \left(\frac{\alpha_\gamma}{\sigma}\left(\frac{\lambda}{\eps}\right)+C(\eps_0+\delta_3)\right)\int_0^1 W^2\,dy,
\eeq
\beq\label{newb1}
{2\alpha_\gamma} \frac{\lambda^2}{\eps^3}|\mathcal{B}_1|\leq \left(1 +C(\eps_0+\delta_3)\right)\int_0^1 W^2\,dy.
\eeq

\vskip0.3cm

$\bullet$ {\bf Change of variable for $\mathcal{G}_2$:}
We use \eqref{ach}, \eqref{Q-est11},  and \eqref{sigma-p} to get
\begin{align*}
\begin{aligned}
\mathcal{G}_2&=- \sigma_\eps\lambda  \int_0^1   Q(v|\tilde v_\eps) dy \\
&\ge-\frac{\sigma_\eps\lambda}{2\gamma} \int_0^1 p(\tilde v_\eps)^{-\frac{1}{\gamma}-1} w^2 dy +\sigma_\eps\lambda\frac{1+\gamma}{3\gamma^2}\int_0^1 p(\tilde v_\eps)^{-\frac{1}{\gamma}-2} w^3 dy\\
&\ge\left(\frac{\lambda}{2\sigma}-C\eps\delta_3 \right)\int_0^1  w^2 dy -\frac{\lambda}{3\alpha_\gamma}\int_0^1 w^3 dy-C\frac{\eps_0\lambda}{\alpha_\gamma}\int_0^1 |w|^3 dy.
\end{aligned}
\end{align*}
When renormalizing with \eqref{changeW}, we obtain:
\begin{equation}\label{H_2}
-2\alpha_\gamma \frac{\lambda^2}{\eps^3}\mathcal{G}_2\leq  \left(-\frac{\alpha_\gamma}{\sigma}\left(\frac{\lambda}{\eps}\right) +C\delta_3     \right)\int_0^1W^2\,dy +\frac{2}{3}\int_0^1W^3\,dy+C\eps_0\int_0^1|W|^3\,dy.
\end{equation}
Note that the very first term in the inequality \eqref{H_2} will exactly cancel the divergent term of $\mathcal{B}_2$. This is why an expansion to the  order three is needed.

\vskip0.3cm

$\bullet$ {\bf Change of variable on $\mathcal{D}$:} 
To deal with the diffusion term $\mathcal{D}$, we first need  a uniform in $y$ estimate on  $\frac{dy}{d\xi}$. This is provided by the following lemma.
\begin{lemma}
There exists a constant $C>0$ such that 
for any $\eps\leq \eps_0$, and any $y\in [0,1]$:
$$
\left|\frac{dy/d\xi}{y(1-y)}-\frac{\eps}{2\alpha_\gamma}\right|\leq C\eps^2.
$$
\end{lemma}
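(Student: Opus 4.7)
The plan is to combine the two integrated forms of the profile equation with the asymptotic expansion from Lemma \ref{lemma_pression}. A direct computation from $[p]=\eps$ gives
\[
\frac{dy/d\xi}{y(1-y)}=\frac{\eps\,(p(\tilde v_\eps))'}{(p(\tilde v_\eps)-p(v_-))(p(v_+)-p(\tilde v_\eps))},
\]
so the task reduces to an asymptotic expansion of the right-hand side. Integrating the shock ODE \eqref{small_shock1} from $-\infty$ and from $+\infty$ (using $(p(\tilde v_\eps))'\to 0$ at both ends) yields the two identities
\begin{align*}
\sigma_\eps (p(\tilde v_\eps))'&=(p(\tilde v_\eps)-p(v_-))+\sigma_\eps^2(\tilde v_\eps-v_-),\\
\sigma_\eps (p(\tilde v_\eps))'&=(p(\tilde v_\eps)-p(v_+))+\sigma_\eps^2(\tilde v_\eps-v_+).
\end{align*}

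Dividing the first by $(p(\tilde v_\eps)-p(v_-))$ and the second by $(p(v_+)-p(\tilde v_\eps))$ produces the key algebraic cancellation between the leading $+1$ and $-1$ terms, leaving
\[
\sigma_\eps (p(\tilde v_\eps))'\cdot\frac{\eps}{(p(\tilde v_\eps)-p(v_-))(p(v_+)-p(\tilde v_\eps))}=\sigma_\eps^2\!\left[\frac{\tilde v_\eps-v_-}{p(\tilde v_\eps)-p(v_-)}+\frac{\tilde v_\eps-v_+}{p(v_+)-p(\tilde v_\eps)}\right].
\]
Applying Lemma \ref{lemma_pression} with $p=p(\tilde v_\eps(\xi))\in[p(v_-),p(v_+)]$ controls the bracket uniformly in $\xi$ by $-\tfrac12\tfrac{p''(v_-)}{p'(v_-)^2}(v_--v_+)+O(\eps^2)$. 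Combining this with \eqref{sigma-f}--\eqref{sigma-p}, the Rankine--Hugoniot expansion $v_--v_+=-\eps/p'(v_-)+O(\eps^2)$, and the explicit identity $\tfrac{p''(v_-)}{p'(v_-)^2}=\tfrac{\gamma+1}{\gamma p(v_-)}$, a routine computation gives
\[
\frac{\sigma_\eps\,(p(\tilde v_\eps))'}{(p(\tilde v_\eps)-p(v_-))(p(v_+)-p(\tilde v_\eps))}=-\frac{\gamma+1}{2\gamma p(v_-)}+O(\eps).
\]
Dividing by $\sigma_\eps=-\sigma+O(\eps)$ and recalling $\alpha_\gamma=\gamma\sigma p(v_-)/(\gamma+1)$ converts the right-hand side into $\tfrac{1}{2\alpha_\gamma}+O(\eps)$, and multiplying by $\eps$ finally yields the announced bound.

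The main obstacle, and the one that motivates the whole construction, is the apparent indeterminacy at $y\in\{0,1\}$: by \eqref{tail} the numerator $(p(\tilde v_\eps))'$ vanishes at $\pm\infty$ like $\eps^2 e^{-C\eps|\xi|}$ while each factor in the denominator vanishes linearly. Without a cancellation mechanism one would not obtain a uniform estimate. Summing the two integrated forms of the ODE is precisely what kills the $O(1)$ part in the bracket, reducing the problem to the $O(\eps)$ quantity furnished by Lemma \ref{lemma_pression}, and delivering the uniform-in-$\xi$ control on $\bbr$.
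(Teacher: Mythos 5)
Your proof is correct and follows essentially the same route as the paper's: both reduce the claim to the identity $\sigma_\eps\,\frac{dy/d\xi}{y(1-y)}=\sigma_\eps^2\big[\tfrac{\tilde v_\eps-v_-}{p(\tilde v_\eps)-p(v_-)}+\tfrac{\tilde v_\eps-v_+}{p(v_+)-p(\tilde v_\eps)}\big]$ and then invoke Lemma \ref{lemma_pression} together with \eqref{sigma-f}. The only (cosmetic) difference is that you obtain this identity by summing the two integrated forms of the profile ODE so the $\pm1$ cancel, whereas the paper starts from the single integrated form with $v_-$, substitutes $\sigma_\eps^2=\tfrac{p(v_+)-p(v_-)}{v_--v_+}$, and regroups algebraically.
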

\begin{proof} 
 From \eqref{small_shock1} we have
\[
p(\tilde v_\eps)'=\sigma_\eps (\tilde v_{\eps}-v_-) + \frac{p(\tilde v_\eps)-p(v_-)}{\sigma_\eps},
\]
therefore
\[
\eps\frac{dy}{d\xi}= p(\tilde v_\eps)'=\frac{1}{\sigma_\eps}\Big(\sigma_\eps^2 (\tilde v_{\eps}-v_-) + p(\tilde v_\eps)-p(v_-)\Big),
\]
with 
$$
\sigma_\eps^2=\frac{p(v_+)-p(v_-)}{v_--v_+}.
$$
Plugging the expression of $\sigma_\eps^2$ into the one of $\eps\frac{dy}{d\xi}$ and writing the result with respect to differences of values of functions at $\tilde{v}_\eps$ and at the end points $v_\pm$, we find
\begin{eqnarray*}
\eps\frac{dy}{d\xi}&=&\frac{1}{\sigma_\eps(v_--v_+)}\Big(       (p(v_+)-p(v_-)) (\tilde v_{\eps}-v_-) +  (p(\tilde v_\eps)-p(v_-))(v_--v_+)  \Big)\\
&=&\frac{1}{\sigma_\eps(v_--v_+)} \Big(       (p(v_+)-p(\tilde v_\eps)) (\tilde v_{\eps}-v_-)+  (p(\tilde v_\eps)-p(v_-))(\tilde v_{\eps}-v_-) \\
&&\qquad\qquad+ (p(\tilde v_\eps)-p(v_-))(v_--\tilde v_{\eps})  +(p(\tilde v_\eps)-p(v_-))(\tilde v_{\eps}-v_+)\Big)\\
&=&\frac{1}{\sigma_\eps(v_--v_+)}\Big((p(v_+)-p(\tilde v_\eps)) (\tilde v_{\eps}-v_-)+(p(\tilde v_\eps)-p(v_-))(\tilde v_{\eps}-v_+)\Big).
\end{eqnarray*}
Hence
$$
\eps\frac{dy}{d\xi}=\frac{ (p(v_+)-p(\tilde v_\eps)) (p(\tilde v_\eps)-p(v_-))}{\sigma_\eps(v_--v_+)}\left( \frac{\tilde v_{\eps}-v_-}{p(\tilde v_\eps)-p(v_-)}+\frac{\tilde v_{\eps}-v_+}{p(v_+)-p(\tilde v_\eps)} \right).
$$
Then, using
$$
 y=\frac{p(\tilde v_\eps)-p(v_-)}{\eps}, \qquad 1-y=\frac{p(v_+)-p(\tilde v_\eps)}{\eps},
$$
we have
\[
\frac{dy/d\xi}{y(1-y)}=\frac{\eps}{\sigma_\eps(v_--v_+)}\left( \frac{\tilde v_{\eps}-v_-}{p(\tilde v_\eps)-p(v_-)}+\frac{\tilde v_{\eps}-v_+}{p(v_+)-p(\tilde v_\eps)} \right).
\]
Then
\begin{eqnarray*}
&&\left|\frac{dy/d\xi}{y(1-y)}-\eps \frac{p''(v_-)}{2p'(v_-)^2\sigma}\right|\\
&&\qquad \le \underbrace{\left| \frac{dy/d\xi}{y(1-y)}-\eps \frac{p''(v_-)}{2p'(v_-)^2\sigma_\eps} \right|}_{=:I_1} + \underbrace{\eps \frac{p''(v_-)}{2p'(v_-)^2}\Big|\frac{1}{\sigma_\eps}+\frac{1}{\sigma}\Big|}_{=:I_2}.
\end{eqnarray*}
We use Lemma \ref{lemma_pression} to have
\begin{align*}
\begin{aligned}
I_1=\frac{\eps}{|\sigma_\eps|(v_--v_+)}\left| \frac{\tilde v_{\eps}-v_-}{p(\tilde v_\eps)-p(v_-)}+\frac{\tilde v_{\eps}-v_+}{p(v_+)-p(\tilde v_\eps)}+\frac{p''(v_-)}{2p'(v_-)^2}(v_--v_+)  \right|\le C\eps^2.
\end{aligned}
\end{align*}
Since it follows from \eqref{sigma-f} that $I_2\le C\eps^2$, we get
\[
\left|\frac{dy/d\xi}{y(1-y)}-\eps \frac{p''(v_-)}{2p'(v_-)^2\sigma}\right|\le C\eps^2.
\]
Since $p(v)=v^{-\gamma}$, we have 
$$
\frac{p''(v_-)}{p'(v_-)^2\sigma}=\frac{\gamma+1}{\gamma \sigma p(v_-)}=\frac{1}{\alpha_\gamma}.
$$
This ends the proof of the lemma.
\end{proof}

The  diffusion term $\mathcal{D}$ is as follows:
\beq\label{st-d}
\mathcal{D}=\int_0^1 (1-\lambda y) |\partial_y w|^2 \Big(\frac{dy}{d\xi}\Big) dy.
\eeq
Thanks to the last lemma, we have
\begin{eqnarray*}
\mathcal{D}&\geq&(1-\lambda)\int_0^1  |\partial_y w|^2 \Big(\frac{dy}{d\xi}\Big) dy\\
&\geq&(1-\lambda) (\eps/(2\alpha_\gamma)-C\eps^2)   \int_0^1y(1-y)  |\partial_y w|^2 dy\\
&\geq&\frac{\eps}{2\alpha_\gamma}(1-C(\delta_3+\eps_0))  \int_0^1y(1-y)  |\partial_y w|^2 dy.
\end{eqnarray*}
After normalization, we obtain:
\begin{equation}\label{newD}
-2\alpha_\gamma \frac{\lambda^2}{\eps^3}\mathcal{D}\leq -(1-C(\eps_0+\delta_3))\int_0^1y(1-y) |\partial_y W|^2dy.
\end{equation}

$\bullet$ {\bf Control on $W$:}
Using \eqref{assYp} and \eqref{Ygc}, we find that 
$$
\int_0^1W^2\,dy-2\left|\int_0^1W\,dy\right|\leq C+C(\eps_0+\delta_3)\left(\int_0^1W^2\,dy+\int_0^1|W|\,dy\right).
$$
But 
$$
\left|\int_0^1W\,dy\right|\leq \int_0^1|W|\,dy\leq \frac{1}{8} \int_0^1W^2\,dy+8.
$$
Hence
$$
\int_0^1W^2\,dy\leq 2\left|\int_0^1W\,dy\right|+C+C(\eps_0+\delta_3)\left(\int_0^1W^2\,dy+\int_0^1|W|\,dy\right)\leq C+24+\frac{1}{2}\int_0^1W^2\,dy,
$$
if $\eps_0$ and $\delta_3$ are chosen small enough.
Hence there exists a  constant $C_1>0$  depending on $C_2$, but not on $\eps$ nor $\eps/\lambda$,  such that 
\begin{equation}\label{controlW}
\int_0^1W^2\,dy\leq C_1.
\end{equation}
 Note that  we cannot expect any smallness on this constant. 
 
 $\bullet$ {\bf Control on the $|Y_g|^2$ term:} We have 
 $$
 -2\alpha_\gamma\left(\frac{\lambda^2}{\eps^3}\right) \frac{|Y_g|^2}{\eps \delta_3}
 =-\frac{2\alpha_\gamma}{\delta_3\sigma^4}\left| \frac{\sigma^2\lambda}{\eps^2}Y_g\right|^2.
 $$
 For any $a,b\in \bbr$, we have 
$$
 b^2-a^2=-(b-a)^2+2b(b-a)=-(b-a)^2+2\frac{b}{\sqrt{2}}\sqrt{2}(b-a)\leq (b-a)^2+\frac{b^2}{2}.
$$
 So 
 $$
 -a^2\leq -\frac{b^2}{2}+|b-a|^2.
 $$
 Using this inequality with 
 $$
 a=\frac{\sigma^2\lambda}{\eps^2}Y_g,\qquad b=\int_0^1W^2\,dy+2\int_0^1 W\,dy,
 $$
 and using \eqref{Ygc}, we find
 \begin{eqnarray*}
 && -2\alpha_\gamma\left(\frac{\lambda^2}{\eps^3}\right) \frac{|Y_g|^2}{\eps \delta_3}\leq
  -\frac{\alpha_\gamma}{\delta_3\sigma^4}\left| \int_0^1W^2\,dy+2\int_0^1 W\,dy\right|^2\\
  &&\qquad \qquad +\frac{C}{\delta_3}(\eps_0+\delta_3)^2\left(\int_0^1W^2\,dy+\int_0^1|W|\,dy \right)^2.
 \end{eqnarray*}
 Using \eqref{controlW}, we have 
 $$
 \left(\int_0^1W^2\,dy+\int_0^1|W|\,dy \right)^2\leq \left(\int_0^1W^2\,dy+\sqrt{\int_0^1|W|^2\,dy }\right)^2\leq C\int_0^1W^2\,dy.
 $$
 So, restricting $\eps_0$ such that $\eps_0\leq \delta_3$, we have
\begin{align}
\begin{aligned}\label{Y^2}
 & -2\alpha_\gamma\left(\frac{\lambda^2}{\eps^3}\right) \frac{|Y_g|^2}{\eps \delta_3}\leq
  -\frac{\alpha_\gamma}{\delta_3\sigma^4}\left| \int_0^1W^2\,dy+2\int_0^1 W\,dy\right|^2+C\delta_3\int_0^1W^2\,dy.
 \end{aligned}
\end{align}

 $\bullet$ {\bf Conclusion:} 
 For any $\delta<\delta_3$, we have 
 \begin{eqnarray*}
 &&\mathcal{R}_{\eps,\delta}(v)\leq -\frac{1}{\eps\delta_3}|Y_g(v)|^2 +(1+\delta_3)|\mathcal{B}_1(v)|\\
 &&\qquad\qquad\qquad +\left(1+\delta_3\left(\frac{\eps}{\lambda}\right)\right)|\mathcal{B}_2(v)|-\left(1-\delta_3\left(\frac{\eps}{\lambda}\right)\right)\mathcal{G}_2(v)-(1-\delta_3)\mathcal{D}(v). 
 \end{eqnarray*}
Multiplying \eqref{newD} by $(1-\delta_3)$, \eqref{H_2} by $1-\delta_3(\eps/\lambda)$, \eqref{newb1} by $1+\delta_3$, \eqref{newb2} by $1+\delta_3(\eps/\lambda)$,  
and summing all these terms together with \eqref{Y^2}, we find (remember that $\eps_0\leq \delta_3$ and $\eps/\lambda\leq1$):
\begin{eqnarray*}
&&\qquad\qquad\qquad\qquad 2\alpha_\gamma\left(\frac{\lambda^2}{\eps^3}\right)\mathcal{R}_{\eps,\delta}(v)\\
&& \leq -
\frac{1}{C_\gamma\delta_3}\left(\int_0^1W^2\,dy+2\int_0^1 W\,dy\right)^2+(1+C \delta_3)\int_0^1 W^2\,dy\\
&&\qquad\qquad+\frac{2}{3}\int_0^1 W^3\,dy +C\delta_3 \int_0^1 |W|^3\,dy -(1-C \delta_3)\int_0^1 y(1-y)|\partial_y W|^2\,dy.
\end{eqnarray*}
Let us fix the value of the $\delta_2$ of Proposition \ref{prop:W} corresponding to  the constant $C_1=C$ of \eqref{controlW}. 

Consider  $\bar{\delta}=\max(C_\gamma, C) \delta_3$, and  choose $\delta_3$ small enough, such that 
$\bar{\delta}$ is smaller than $\delta_2$. Then we have 
\begin{eqnarray*}
&&\qquad\qquad\qquad\qquad 2\alpha_\gamma\left(\frac{\lambda^2}{\eps^3}\right)\mathcal{R}_{\eps,\delta}(v)\\
&& \leq - \frac{1}{\delta_2}\left(\int_0^1W^2\,dy+2\int_0^1 W\,dy\right)^2+(1+\delta_2)\int_0^1 W^2\,dy\\
&&\qquad\qquad+\frac{2}{3}\int_0^1 W^3\,dy +\delta_2\int_0^1 |W|^3\,dy  -(1- \delta_2)\int_0^1 y(1-y)|\partial_y W|^2\,dy= R_{\delta_2}(W). 
\end{eqnarray*}
Then from Proposition  \ref{prop:W}, we have $ R_{\delta_2}(W)\leq0$. Therefore  $\mathcal{R}_{\eps,\delta}(v)\leq 0$, for any $\lambda, \delta\leq \delta_3$, $\eps\leq \eps_0$ with $\eps\leq \lambda$, and any $v$ such that  $\mathcal{D}(v)+\mathcal{G}_2(v)$ is finite, and verifying \eqref{assYp}.
 \end{proof}

\subsection{Truncation of the big values of $|p(v)-p(\tilde v_\eps)|$}\label{section-finale}
In order to use Proposition \ref{prop:main3} for proof of Proposition \ref{prop:main}, we need to show that  the values for $p(v)$ such that $|p(v)-p(\tilde{v}_\eps)|\geq\delta_3$ have a small effect. However, the value of $\delta_3$ is itself conditioned to the constant $C_2$ in the proposition. Therefore, we need first to find a uniform bound on $Y_g$ which is not yet conditioned on the level of truncation $k$.

We consider a truncation on $|p(v)-p(\tilde v_\eps)|$ with a  constant $k>0$. Later we will consider the case  $k=\delta_3$ as in Proposition \ref{prop:main3}. But for now, we consider the general case $k$ to estimate the constant $C_2$.  For that, let $\psi_k$ be a continuous function defined by
\beq\label{psi}
 \psi_k(y)=\inf\left(k,\sup(-k,y)\right).
\eeq
We then define the function $\bar{v}_k$ uniquely (since the function $p$ is one to one) as
$$
p(\bar v_k)-\pt=\psi_k(p(v)-\pt).
$$
We have the following lemma.
\begin{lemma}\label{lemmeC2}
For a fixed $v_- > 0$, $u_-\in\bbr$, there exists $C_2, k_0, \eps_0, \delta_0>0$ such that for any $\eps\leq \eps_0$, $\eps/\lambda\leq \delta_0$ with $\lambda<1/2$, the following is true whenever $|Y(U)|\leq \eps^2$:
\begin{eqnarray}
\label{l1}
&& \int_\bbr|a'||h-\tilde{h}_\eps|^2\,d\xi + \int_\bbr|a'| Q(v|\tilde{v}_\eps)\,d\xi\leq C\frac{\eps^2}{\lambda}, \\
\label{lbis}
&& |Y_g(\bar v_k)|\leq C_2\frac{\eps^2}{\lambda}, \qquad \mathrm{for \ every \ } k\leq k_0.
\end{eqnarray}
\end{lemma}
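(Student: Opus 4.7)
The plan is to split $Y(U) = J(U) + L(U)$ into its quadratic and linear parts, where
$$J(U) := \int_\bbr |a'|\,\eta(U|\tilde U_\eps)\,d\xi, \qquad L(U):=\int_\bbr a\Bigl[-\partial_\xi p(\tilde v_\eps)(v-\tilde v_\eps) + \partial_\xi\tilde h_\eps\,(h-\tilde h_\eps)\Bigr]d\xi,$$
and to exploit the weight identity $|a'|\sim(\lambda/\eps)|\tilde v_\eps'|$ from \eqref{d-weight} to absorb $L(U)$ into $J(U)$ up to an $O(\eps^2/\lambda)$ error. Since $J(U)$ coincides with the left-hand side of \eqref{l1} up to a factor $1/2$ on $|h-\tilde h_\eps|^2$, the hypothesis $|Y(U)|\le \eps^2$ will yield \eqref{l1} through a bootstrap of the form $J\le \eps^2+|L(U)|\le \eps^2+J/2+C\eps^2/\lambda$.

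The key auxiliary estimate is $\int_\bbr \bigl(|\partial_\xi p(\tilde v_\eps)|^2+|\partial_\xi\tilde h_\eps|^2\bigr)/|a'|\,d\xi\le C\eps^2/\lambda$, which follows from $|a'|\sim(\lambda/\eps)|\tilde v_\eps'|$, $|\partial_\xi\tilde h_\eps|\lesssim|\tilde v_\eps'|$, and $\int|\tilde v_\eps'|\,d\xi\le C\eps$ (by \eqref{tail}). The $h$-component of $L(U)$ is then bounded by $C(\eps/\sqrt\lambda)J^{1/2}$ via Cauchy--Schwarz. For the $v$-component I would split $\bbr=\{v\le3v_-\}\cup\{v>3v_-\}$: on the first region, \eqref{rel_Q} gives $(v-\tilde v_\eps)^2\le c_1^{-1}Q(v|\tilde v_\eps)$ so Cauchy--Schwarz yields another $C(\eps/\sqrt\lambda)J^{1/2}$ contribution; on the second region, $|\partial_\xi p(\tilde v_\eps)|\le C(\eps/\lambda)|a'|$ combined with the linear lower bound $|v-\tilde v_\eps|\le c_2^{-1}Q(v|\tilde v_\eps)$ from \eqref{rel_Q} produces a term $\le C(\eps/\lambda)J$. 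Young's inequality together with the smallness of $\eps/\lambda$ then closes the bootstrap and yields \eqref{l1}.

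For \eqref{lbis} I would exploit that the truncation confines $\bar v_k$ to a compact regime where Lemma \ref{lem:local} applies. Since $\bar v_k$ lies between $v$ and $\tilde v_\eps$, the monotonicity \eqref{Q-sim} gives $Q(\bar v_k|\tilde v_\eps)\le Q(v|\tilde v_\eps)$, so $\int|a'|Q(\bar v_k|\tilde v_\eps)d\xi\le C\eps^2/\lambda$ by \eqref{l1}. Provided $k_0,\eps_0\le\delta_*$, \eqref{pQ-equi0} gives $|p(\bar v_k)-p(\tilde v_\eps)|^2\le C\,Q(\bar v_k|\tilde v_\eps)$, and $|\bar v_k-\tilde v_\eps|\sim|p(\bar v_k)-p(\tilde v_\eps)|$ by elementary Taylor expansion in the bounded regime where $\bar v_k$ now lives. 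The first two terms of $Y_g(\bar v_k)$ are then directly bounded by $C\int|a'|Q(\bar v_k|\tilde v_\eps)d\xi\le C\eps^2/\lambda$, while the two linear terms are controlled by $C(\eps/\sqrt\lambda)\bigl(\int|a'||p(\bar v_k)-p(\tilde v_\eps)|^2 d\xi\bigr)^{1/2}\le C\eps^2/\lambda$ via Cauchy--Schwarz and the weight estimate above, uniformly in $k\in(0,k_0]$.

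The main obstacle is the control of the $v$-part of $L(U)$ on the tail $\{v>3v_-\}$ in the first step, where $(v-\tilde v_\eps)^2$ is not controlled by $Q(v|\tilde v_\eps)$ and one must resort to the merely linear global lower bound from \eqref{rel_Q}; this produces a term of order $\eps/\lambda$ rather than $\eps/\sqrt\lambda$, and is precisely the reason the smallness assumption on $\eps/\lambda$ is needed to close the bootstrap in \eqref{l1}. For the truncated variable $\bar v_k$ this difficulty disappears, since the truncation forces $\bar v_k$ into the quadratic regime where Lemma \ref{lem:local} can be applied without loss.
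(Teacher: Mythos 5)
Your proposal is correct and follows essentially the same route as the paper: both prove \eqref{l1} by isolating the weighted relative entropy inside $Y(U)$, bounding the linear remainder via $|\partial_\xi\nabla\eta(\tilde U_\eps)|\le C(\eps/\lambda)|a'|$ together with Cauchy--Schwarz (using $\int|a'|\,d\xi\le C\lambda$) on the $h$-part and the region $\{v\le 3v_-\}$, and using the merely linear lower bound of \eqref{rel_Q} plus the smallness of $\eps/\lambda$ to absorb the tail $\{v>3v_-\}$; the proof of \eqref{lbis} via \eqref{Q-sim}, \eqref{pQ-equi0}, and Cauchy--Schwarz on the linear terms is likewise the paper's argument. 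The only difference is cosmetic (you keep $Q(v|\tilde v_\eps)$ throughout, whereas the paper first converts it to $|v-\tilde v_\eps|^2$ and $|v-\tilde v_\eps|$ on the two regions before absorbing).
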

\begin{proof}
$\bullet$ {\it Proof of \eqref{l1}:}
We first use \eqref{rel_Q} to estimate 
\begin{align}
\begin{aligned}\label{est-1}
&\int_\bbr  |a' |\eta(U|\tilde U_\eps) d\xi \ge  \int_\bbr  |a'| \frac{|h-\tilde h_\eps |^2}{2} d\xi
 +c_1\int_{v\le 3v_-}|a'|  |v-\tilde v_\eps|^2 + c_2\int_{v> 3v_-}|a'|  |v-\tilde v_\eps|.
\end{aligned}
\end{align}
On the other hand, using $\int_\bbr a'\eta(U|\tilde U_\eps) d\xi=-Y+\int_\bbr a\partial_\xi\nabla\eta(\tilde U_\eps) (U-\tilde U_\eps) d\xi$ in \eqref{badgood}, and $|Y|\le\eps^2$, we have 
\[
\int_\bbr  |a'|\eta(U|\tilde U_\eps) d\xi \le \eps^2 + \int_\bbr |\partial_\xi\nabla\eta(\tilde U_\eps)| |U-\tilde U_\eps| d\xi.
\]
Then, since $ |\partial_\xi\nabla\eta(\tilde U_\eps)| \leq C |\partial_\xi  p(\tilde v_\eps)| =C\frac{\eps}{\lambda} |a'|$ by \eqref{w-real}, we have
\begin{align*}
\begin{aligned}
&\int_\bbr  |a'|\eta(U|\tilde U_\eps) d\xi \\
&\quad\le  \eps^2 +C\frac{\eps}{\lambda}\int_\bbr |a'| |U-\tilde U_\eps| d\xi\\
&\quad\le \eps^2 +C\frac{\eps}{\lambda}\int_{v> 3v_-} |a'| |v-\tilde v_\eps| d\xi\\
&\qquad+C\frac{\eps}{\lambda}\Big(\int_{v\le 3v_-} |a'| |v-\tilde v_\eps|^2 d\xi+\int_{\bbr} |a'| |h-\tilde h_\eps|^2 d\xi\Big)^{1/2}\Big(\int_\bbr |a'| d\xi \Big)^{1/2}.
\end{aligned}
\end{align*}
Since it follows from \eqref{tail} that
\[
\int_\bbr |a'| d\xi=\frac{\lambda}{\eps}\int_\bbr |\partial_\xi  p(\tilde v_\eps)| d\xi \le \frac{\lambda}{\eps} |p'(v_+)| \int_\bbr  |\tilde v_\eps'| d\xi\leq C \lambda,
\]
using Young's inequality, we get  that
\begin{align}
\begin{aligned}\label{est-2}
&\int_\bbr  |a'|\eta(U|\tilde U_\eps) d\xi \\
&\quad\le \eps^2 +C\frac{\eps}{\lambda}\int_{v> 3v_-} |a'| |v-\tilde v_\eps| d\xi+\frac{c_1}{2}\int_{v\le 3v_-} |a'| |v-\tilde v_\eps|^2 d\xi+\frac{1}{2}\int_{\bbr} |a'| |h-\tilde h_\eps|^2 d\xi +C \frac{\eps^2}{\lambda}.
\end{aligned}
\end{align}
Now, taking $\deo\leq1/2$ such that $\frac{\eps}{\lambda}<\deo\leq 1/2$, and then combining the two estimates \eqref{est-1} and \eqref{est-2} together with $\eps^2<C\frac{\eps^2}{\lambda}$, we have 
\begin{align}
\begin{aligned}\label{esth}
\int_\bbr  |a'| \frac{|h-\tilde h_\eps |^2}{2} d\xi
 +\int_{v\le 3v_-}|a'|  |v-\tilde v_\eps|^2 + \int_{v>3 v_-}|a'|  |v-\tilde v_\eps| \leq C \frac{\eps^2}{\lambda}.
\end{aligned}
\end{align}
Applying the above estimate to \eqref{est-2}, we complete \eqref{l1}. 
\vskip0.3cm
$\bullet$ {\it Proof of \eqref{lbis}:} 
First of all, we have
\begin{eqnarray*}
&&|Y_g(\bar v_k)|= \left|-\frac{1}{2\sigma_\eps^2}\int_\bbr a' |p(\bar v_k)-p(\tilde v_\eps)|^2 d\xi -\int_\bbr a' Q(\bar v_k|\tilde v_\eps) d\xi \right.\\
&&\qquad\qquad\qquad\qquad\left.-\int_\bbr a \partial_\xi p(\tilde v_\eps)(\bar v_k-\tilde v_\eps)d\xi+\frac{1}{\sigma_\eps}\int_\bbr a \partial_\xi \tilde h_\eps\big(p(\bar v_k)-p(\tilde v_\eps)\big)d\xi\right |\\
&&\qquad\quad\quad \leq C  \underbrace{\int_\bbr|a'| |p(\bar v_k)-p(\tilde v_\eps)|^2\,d\xi}_{=:I_1} +   \int_\bbr|a'| Q(\bar v_k|\tilde{v}_\eps)\,d\xi\\
&&\qquad\qquad\qquad\qquad +C\underbrace{ \int_\bbr \frac{\eps}{\lambda}|a'|\Big(|\bar v_k-\tilde v_\eps| + |p(\bar v_k)-p(\tilde v_\eps)| \Big)d\xi}_{=:I_2}.
\end{eqnarray*}
Let us fix $k_0=\delta_*/2$ of Lemma \ref{lem:local}. Then, for any $k\leq k_0$, we have $|p(\bar v_k)-p(\tilde{v}_\eps)|\leq k\le \frac{\delta_*}{2}$.\\
 Thus using \eqref{pQ-equi0} with $\eps_0\ll 1$, we have
\[
I_1\le C \int_\bbr|a'| Q(\bar v_k|\tilde{v}_\eps)\,d\xi.
\]
Using \eqref{rel_Q} and \eqref{pQ-equi0}, we have
\begin{align*}
\begin{aligned}
I_2&\le \sqrt{ \int_\bbr\left(\frac{\eps}{\lambda}\right)^2|a'|\,d\xi} \sqrt{ \int_\bbr|a'| \Big(|\bar v_k-\tilde v_\eps|^2 + |p(\bar v_k)-p(\tilde v_\eps)|^2 \Big)\,d\xi}\\
&\le C \sqrt{\frac{\eps^2}{\lambda}}\sqrt{ \int_\bbr|a'| Q(\bar v_k|\tilde{v}_\eps)\,d\xi}.
\end{aligned}
\end{align*}
Notice that since the definition of $\bar v_k$ implies either $\tilde v_\eps\le \bar v_k\le v$ or $v\le\bar v_k\le \tilde v_\eps$, it follows from \eqref{Q-sim} that
\[
Q(v|\tilde{v}_\eps)\ge Q(\bar v_k|\tilde{v}_\eps).
\]
Therefore, using \eqref{l1}, there exists a constant $C_2>0$ such that
\[
|Y_g(\bar v_k)|\le C \int_\bbr|a'| Q(v|\tilde{v}_\eps)\,d\xi +  C \sqrt{\frac{\eps^2}{\lambda}}\sqrt{ \int_\bbr|a'| Q(v|\tilde{v}_\eps)\,d\xi} \le C_2\frac{\eps^2}{\lambda}.
\]

\end{proof}

We now fix the constant $\delta_3$ of Proposition \ref{prop:main3} associated to the constant $C_2$ of Lemma \ref{lemmeC2}. Without loss of generality, we can assume that $\delta_3< k_0$ (since Proposition \ref{prop:main3} is valid for any smaller $\delta_3$). From now on, we set
$$
\bar v:=\bar v_{\delta_3}, \qquad \qquad \bar{U}:=(\bar v, h).
$$
Note that from Lemma \ref{lemmeC2}, we have 
\begin{equation}\label{YC2}
|Y_g(\bar v)|\leq C_2 \frac{\eps^2}{\lambda}.
\end{equation}
We will  use the notations $\mathcal{G}_1, \mathcal{G}_2, \mathcal{D}$ to denote three good terms of $\mathcal{G}$, that is $\mathcal{G}=\mathcal{G}_1+\mathcal{G}_2+\mathcal{D}$ where 
\begin{align}
\begin{aligned}\label{ggd}
&\mathcal{G}_1(U):=\frac{\sigma_\eps}{2}\int_\bbr a'\Big(h-\tilde h_\eps -\frac{p(v)-p(\tilde v_\eps)}{\sigma_\eps}\Big)^2 d\xi,\\
&\mathcal{G}_2(U)=\sigma_\eps  \int_\bbr  a' Q(v|\tilde v_\eps) d\xi,\\
&\mathcal{D}(U)= \int_\bbr a |\partial_\xi (p(v)-p(\tilde v_\eps))|^2 d\xi.
\end{aligned}
\end{align}
We first notice that since $p(\bar v)-p(\tilde v_\eps)$ is constant for $v$ satisfying either $p(v)-p(\tilde v_\eps)<-\delta_3$ or $p( v)-p(\tilde v_\eps)>\delta_3$, we have
$$
\mathcal{D}(\bar U)= \int_\bbr a |\partial_\xi (p(v)-p(\tilde v_\eps))|^2 {\mathbf 1}_{\{|p(v)-\pt |\leq\delta_3\}} d\xi.
$$
We also note that
\begin{align}
\begin{aligned}\label{def-bar}
|p(v)-p(\bar v)|&= |(p(v)-\pt)+(\pt-p(\bar v))|\\[0.2cm]
&=|(\psi_{\delta_3}-I)(p(v)-\pt)|\\[0.2cm]
&= (|p(v)-\pt|-\delta_3)_+.
\end{aligned}
\end{align}
Therefore,
\begin{align}
\begin{aligned}\label{D_bar}
\mathcal{D}(U)&=\int_\bbr a  |\partial_\xi (p(v)-p(\tilde v_\eps))|^2 d\xi\\
&=\int_\bbr a  |\partial_\xi (p(v)-p(\tilde v_\eps))|^2 ( {\mathbf 1}_{\{|p(v)-\pt |\leq\delta_3\}} + {\mathbf 1}_{\{|p(v)-\pt |>\delta_3\}} )d\xi\\
&=\mathcal{D}(\bar U)+\int_\bbr a  |\partial_\xi (p(v)-p(\bar v))|^2 d\xi\\
&\ge \int_\bbr a  |\partial_\xi (p(v)-p(\bar v))|^2 d\xi,
\end{aligned}
\end{align}
which also yields
\begin{equation}\label{eq_D}
\mathcal{D}(U)-\mathcal{D}(\bar U)=\int_\bbr a  |\partial_\xi (p(v)-p(\bar v))|^2 d\xi\geq0.
\end{equation}
On the other hand, since $Q(v|\tilde{v}_\eps)\geq  Q(\bar v|\tilde{v}_\eps)$,
we have
\begin{equation}\label{eq_G}
|\sigma_\eps| \int_\bbr |a'|Q(v|\tilde{v}_\eps)\,d\xi\geq \mathcal{G}_2(U)-\mathcal{G}_2(\bar U)=|\sigma_\eps|\int_\bbr |a'|\left(Q(v|\tilde{v}_\eps)-Q(\bar v|\tilde{v}_\eps)\right)\,d\xi\geq 0.
\end{equation}

We will first show the following lemma.
\begin{lemma}\label{lemma_oublie}
There exist $C, \eps_0, \deo>0$, such that for any $\eps<\eps_0$, $\eps/\lambda<\deo$, and $\lambda<1/2$, the following is true whenever   $|Y(U)|\leq \eps^2$:
\begin{eqnarray}
\label{l2}
&&0\leq \mathcal{G}_2(U)-\mathcal{G}_2(\bar U)\leq  \mathcal{G}_2(U)\leq C\frac{\eps^2}{\lambda}, \\
\label{l3}
&&  \int_\bbr|a'| |p(v)-p(\bar v)|^2 \,d\xi+ \int_\bbr|a'| |p(v)-p(\bar v)|\,d\xi\leq C\sqrt{\frac{\eps}{\lambda}}\mathcal{D}(U),\\
\label{l4}
&&  \int_\bbr|a'| \big ||p(v)-\pt|^2-|p(\bar v)-\pt|^2\big| \,d\xi \leq C\sqrt{\frac{\eps}{\lambda}}\mathcal{D}(U),\\
\label{l5}
&&  \int_\bbr|a'| \left|p(v|\tilde{v}_\eps)-p(\bar v|\tilde{v}_\eps)\right | \,d\xi+  \int_\bbr|a'| \left|Q(v|\tilde{v}_\eps)-Q(\bar v|\tilde{v}_\eps)\right | \,d\xi+\int_\bbr|a'| |v-\bar v| \,d\xi\\\
\nonumber
&&\qquad\qquad \leq  C\sqrt{\frac{\eps}{\lambda}}\mathcal{D}(U)
+C(\mathcal{G}_2(U)-\mathcal{G}_2(\bar U)).
\end{eqnarray}
\end{lemma}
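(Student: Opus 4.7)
The plan is as follows. The two statements of \eqref{l2} are immediate: the first inequality and its accompanying equality reproduce \eqref{eq_G} together with the monotonicity \eqref{Q-sim} (which applies since $\bar v$ lies between $v$ and $\tilde v_\eps$), and the upper bound $\mathcal{G}_2(U)\le C\eps^2/\lambda$ follows by writing $\mathcal{G}_2(U)=|\sigma_\eps|\int|a'|Q(v|\tilde v_\eps)\,d\xi$ and invoking the second estimate of \eqref{l1}, together with the uniform boundedness of $|\sigma_\eps|$ from \eqref{sigma-f}.

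For the remaining three estimates, I set $w:=p(v)-p(\tilde v_\eps)$, $g:=p(v)-p(\bar v)$, and $\Omega:=\{|w|>\delta_3\}$. By \eqref{def-bar}, $g$ is supported in $\Omega$ and satisfies $\partial_\xi g=\mathbf{1}_\Omega\partial_\xi w$ a.e., so \eqref{eq_D} gives $\int a\,|g'|^2\,d\xi\le \mathcal{D}(U)$ and $\|g'\|_{L^2(\Omega)}^2\le 2\mathcal{D}(U)$. Restricting $\eps_0$ so that $|p(\tilde v_\eps)-p(v_-)|\le \delta_3/2$, I have $|p(v)-p(v_-)|\ge \delta_3/2$ on $\Omega$; a short case analysis using Lemma~\ref{lem-pro} (separating $v\le 3v_-$ from $v>3v_-$) yields a uniform lower bound $Q(v|\tilde v_\eps)\ge c_{\delta_3}>0$ on $\Omega$, and combined with \eqref{l1} this gives the key preliminary estimate
\[
\int_\Omega |a'|\,d\xi\le c_{\delta_3}^{-1}\int|a'|Q(v|\tilde v_\eps)\,d\xi\le C\frac{\eps^2}{\lambda}.
\]

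To prove \eqref{l3}, I cascade Cauchy--Schwarz with the one-dimensional Sobolev embedding: since $g$ vanishes on $\partial\Omega$, one has $\|g\|_\infty^2\le 2\sqrt{2}\|g\|_{L^2(\Omega)}\sqrt{\mathcal{D}(U)}$, while
$\int|a'||g|^2\,d\xi\le \|g\|_\infty^2\int_\Omega|a'|$ and $\int|a'||g|\,d\xi\le (\int_\Omega|a'|)^{1/2}(\int|a'||g|^2)^{1/2}$. Closing the loop on $\|g\|_{L^2(\Omega)}$ requires a further Wirtinger-type estimate on each component of $\Omega$ combined with the observation that, by the exponential decay \eqref{tail} of $\tilde v_\eps'$, any component of $\Omega$ intersecting the tail region $|\xi|\gtrsim 1/\eps$ contributes only lower-order corrections; this produces an effective bound of the form $\|g\|_{L^2(\Omega)}^2\le C\eps^{-2}\mathcal{D}(U)$. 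Combining everything with $\eps/\lambda\le \delta_0<1$ (so $(\eps/\lambda)^2\le \eps/\lambda$) produces the desired prefactor $\sqrt{\eps/\lambda}$. The estimate \eqref{l4} is then a direct consequence of the algebraic identity $|w|^2-|p(\bar v)-p(\tilde v_\eps)|^2 = |g|(|w|+|p(\bar v)-p(\tilde v_\eps)|)$ on $\Omega$ and another application of Cauchy--Schwarz against \eqref{l3}.

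For \eqref{l5}, on $\Omega$ the definition of $\bar v$ gives $|p(\bar v)-p(\tilde v_\eps)|=\delta_3$, so $|\bar v-\tilde v_\eps|\ge c(\delta_3)>0$ by monotonicity of $p$ near $v_-$. Choosing the parameter $\delta_*$ of \eqref{rel_Q1} smaller than $c(\delta_3)$ yields the pointwise inequality $Q(v|\tilde v_\eps)-Q(\bar v|\tilde v_\eps)\ge C|v-\bar v|$ on $\Omega$; integrating against $|a'|$ produces the bound on $\int|a'||v-\bar v|\,d\xi$ in terms of $\mathcal{G}_2(U)-\mathcal{G}_2(\bar U)$. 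The bounds on $|Q(v|\tilde v_\eps)-Q(\bar v|\tilde v_\eps)|$ and $|p(v|\tilde v_\eps)-p(\bar v|\tilde v_\eps)|$ then follow by writing these as $g$ plus $(v-\bar v)$-terms, using that $p(\tilde v_\eps)$ and $p'(\tilde v_\eps)$ are uniformly bounded, and combining with \eqref{l3}. The main obstacle will be the closure of the Sobolev loop for \eqref{l3}: matching the smallness $\int_\Omega|a'|\le C\eps^2/\lambda$ with the diffusion control on $g'$ to extract precisely the factor $\sqrt{\eps/\lambda}$ demands careful exploitation of the layer structure \eqref{tail} and cannot be obtained from either ingredient alone.
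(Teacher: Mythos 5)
Your treatment of \eqref{l2}, \eqref{l4} and \eqref{l5} follows the paper's route (monotonicity of $Q(\cdot|w)$, the difference-of-squares expansion, and \eqref{rel_Q1} respectively), and your preliminary bound $\int_\Omega|a'|\,d\xi\le C\eps^2/\lambda$ is exactly the paper's inequality ${\mathbf 1}_{\{|p(v)-p(\bar v)|>0\}}\le Q(v|\tilde v_\eps)/\alpha$ integrated against \eqref{l1}. The problem is \eqref{l3}, where your argument has two genuine gaps. First, the claimed bound $\|g\|_{L^2(\Omega)}^2\le C\eps^{-2}\mathcal{D}(U)$ is not justified and is false in general: $\Omega=\{|p(v)-\pt|>\delta_3\}$ is determined by an arbitrary large perturbation, so its components can be arbitrarily long or even unbounded (membership of $U$ in $\mathcal{H}$ imposes no decay of $v-\tilde v_\eps$ at infinity), and a Wirtinger inequality on a component of length $L$ costs $L^2$, which you cannot control. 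The paper never bounds the unweighted $L^2$ norm of $g$; instead it locates a zero of $g$ at some $\xi_0\in[-1/\eps,1/\eps]$ (via \eqref{l1}, \eqref{lower-v} and \eqref{pQ-equi0}) and derives the pointwise growth bound $|g(\xi)|^2\le C(|\xi|+1/\eps)\mathcal{D}(U)$, which is then integrated against the exponentially decaying weight $|a'|$ \eqref{tail}, splitting at $|\xi|=\eps^{-1}\sqrt{\lambda/\eps}$. That weighted, anchored argument is what replaces your ``Sobolev loop,'' and you correctly sense but do not supply it.

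Second, even granting the quadratic estimate, your bound for the linear term $\int|a'|\,|g|\,d\xi$ by Cauchy--Schwarz against $\big(\int_\Omega|a'|\big)^{1/2}\big(\int|a'||g|^2\big)^{1/2}$ produces a quantity of order $\sqrt{\mathcal{D}(U)}$, not $\mathcal{D}(U)$; when $\mathcal{D}(U)$ is small this cannot be dominated by $C\sqrt{\eps/\lambda}\,\mathcal{D}(U)$, so \eqref{l3} as stated does not follow. The paper's resolution is the elementary inequality $(y-\delta_3)_+\le \tfrac{2}{\delta_3}(y-\delta_3/2)_+^2$, which reduces the linear truncation at level $\delta_3$ to the quadratic truncation at level $\delta_3/2$, i.e.\ one proves the quadratic bound for $\bar v_{\delta_3/2}$ and deduces both halves of \eqref{l3} for $\bar v_{\delta_3}$ from it. Without this device (or an equivalent one) the second term of \eqref{l3} is out of reach, and since \eqref{l4} and \eqref{l5} are built on \eqref{l3}, the gap propagates to the rest of the lemma.
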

\begin{proof}
We split the proof into several steps.

\noindent{\it Step 1:} The estimate \eqref{l1}  with \eqref{eq_G} gives  \eqref{l2}.

\vskip0.2cm
\noindent{\it Step 2:} Note first that since $(y-\delta_3/2)_+\geq \delta_3/2$ whenever $(y-\delta_3)_+>0$, we have
\beq\label{y-identity}
(y-\delta_3)_+\leq (y-\delta_3/2)_+{\mathbf 1}_{\{y-\delta_3>0\}}\leq (y-\delta_3/2)_+\left(\frac{(y-\delta_3/2)_+}{\delta_3/2}\right)\leq \frac{2}{\delta_3} (y-\delta_3/2)_+^2.
\eeq
Hence, to show \eqref{l3}, it is enough to show it only for the quadratic part, with $\bar v$ defined with $\delta_3/2$ instead of $\delta_3$. We will keep the notation $\bar v$ for this case below.

\vskip0.2cm
\noindent{\it Step 3:} Since $|a'|=(\lambda/\eps)|\tilde{v}'_\eps|$, thanks to \eqref{lower-v} and \eqref{l1}, we get
\begin{eqnarray*}
2\eps\int_{-1/\eps}^{1/\eps} Q(v|\tilde{v}_\eps)\, d\xi&\leq& \frac{2\eps}{\inf_{[-1/\eps,1/\eps]}|a'|}\int_\bbr|a'|Q(v|\tilde{v}_\eps)\,d\xi\\
&\leq& C \frac{\eps}{\lambda\eps}\frac{\eps^2}{\lambda}=C\left(\frac{\eps}{\lambda}\right)^2.
\end{eqnarray*}
Hence, there exists $\xi_0\in [-1/\eps,1/\eps]$ such that $Q(v(\xi_0),\tilde{v}_\eps(\xi_0))\leq C(\eps/\lambda)^2$. For $\deo$ small enough, and using  \eqref{pQ-equi0}, we have 
$$
|(p(v)-\pt)(\xi_0)|\leq C\frac{\eps}{\lambda}.
$$
Thus, if $\deo$ is small enough such that $C\eps/\lambda\leq \delta_3/2$, then we have from \eqref{def-bar} that
$$
(p(v)-p(\bar v))(\xi_0)=0.
$$
Therefore using \eqref{D_bar}, we obtain that for any $\xi\in \bbr$,
\begin{align}
\begin{aligned}\label{beta1}
|(p(v)-p(\bar v))(\xi)|&=\left|\int_{\xi_0}^\xi \partial_\zeta (p(v)-p(\bar v))\,d\zeta\right|\\
&\le \sqrt{|\xi|+|\xi_0|} \sqrt{  \int_\bbr \Big|\partial_\zeta (p(v)-p(\bar v)) \Big|^2 \,d\zeta }\\
&\leq C\sqrt{|\xi|+\frac{1}{\eps}}\sqrt{\mathcal{D}(U)}.
\end{aligned}
\end{align}
For any $\xi$ such that $|(p(v)-p(\bar v))(\xi)|>0$, we have from  \eqref{def-bar} that $|(p(v)-\pt)(\xi)|> \delta_3$. Thus using \eqref{pressure2} and \eqref{rel_Q}, we have $Q(v(\xi)|\tilde{v}_\eps(\xi))\geq \alpha$, for some constant $\alpha>0$ depending only on $\delta_3$. Hence
\begin{equation}\label{beta2}
{\mathbf 1}_{\{|p(v)-p(\bar v)|>0\}}\leq \frac{Q(v|\tilde{v}_\eps)}{\alpha}.
\end{equation}
In the next computation, we split the integral in two parts, and use \eqref{beta1}-\eqref{beta2} to have 
\begin{eqnarray*}
&& \int_\bbr|a'||p(v)-p(\bar v)|^2\,d\xi\leq \int_{-\frac{1}{\eps}\sqrt{\frac{\lambda}{\eps}}}^{\frac{1}{\eps}\sqrt{\frac{\lambda}{\eps}}}|a'||p(v)-p(\bar v)|^2\,d\xi+ \int_{|\xi|\geq\frac{1}{\eps}\sqrt{\frac{\lambda}{\eps}}}|a'||p(v)-p(\bar v)|^2\,d\xi\\
&&\qquad\leq\left(\sup_{\left[-\sqrt{\frac{\lambda}{\eps^3}},\sqrt{\frac{\lambda}{\eps^3}}\right]}|p(v)-p(\bar v)|^2\right)
\int_{-\frac{1}{\eps}\sqrt{\frac{\lambda}{\eps}}}^{\frac{1}{\eps}\sqrt{\frac{\lambda}{\eps}}}|a'|{\mathbf 1}_{\{|p(v)-p(\bar v)|>0\}}\,d\xi\\
&&\qquad\qquad\qquad+C\mathcal{D}(U) \int_{|\xi|\geq\frac{1}{\eps}\sqrt{\frac{\lambda}{\eps}}}|a'| \left(|\xi|+\frac{1}{\eps}\right)\,d\xi\\
&& \qquad \leq C \mathcal{D}(U) \left(\sqrt{\frac{\lambda}{\eps^3}}
\int_\bbr|a'|\frac{Q(v|\tilde{v}_\eps)}{\alpha}\,d\xi+2\int_{|\xi|\geq\frac{1}{\eps}\sqrt{\frac{\lambda}{\eps}}}|a'| |\xi|\,d\xi\right).
\end{eqnarray*}
Therefore we have
$$
\int_\bbr|a'||p(v)-p(\bar v)|^2\,d\xi\leq C\sqrt{\frac{\eps}{\lambda}} \mathcal{D}(U).
$$
Indeed, using \eqref{l1} and \eqref{tail} (recalling $|a'|=(\lambda/\eps)|\tilde{v}'_\eps|$), we have
\[
 \int_\bbr|a'||p(v)-p(\bar v)|^2\,d\xi \leq  C \mathcal{D}(U)\left(\sqrt{\frac{\eps}{\lambda}} +\lambda \eps
\int_{|\xi|\geq\frac{1}{\eps}\sqrt{\frac{\lambda}{\eps}}}e^{-c\eps|\xi|} |\xi|\,d\xi \right),
\]
and for the last term, we take $\deo$ small enough such that for any $\eps/\lambda\leq \deo$,
$$
\lambda \eps\int_{|\xi|\geq\frac{1}{\eps}\sqrt{\frac{\lambda}{\eps}}}e^{-c\eps|\xi|} |\xi|\,d\xi =\frac{\lambda}{\eps} \int_{|\xi|\geq \sqrt{\frac{\lambda}{\eps}}}  e^{-c|\xi|} |\xi| d\xi \le \frac{\lambda}{\eps} \int_{|\xi|\geq \sqrt{\frac{\lambda}{\eps}}} e^{-\frac{c}{2}|\xi|}d\xi=\frac{2\lambda}{c\eps}e^{-\frac{c}{2}\sqrt{\frac{\lambda}{\eps}}}\leq \sqrt{\frac{\eps}{\lambda}}.
$$

As mentioned in Step 2, recall that $\bar v=\bar v_{\delta_3/2}$ in the above estimate. Then using \eqref{def-bar}, we have
\begin{align*}
\begin{aligned}
\int_\bbr|a'||p(v)-p(\bar v_{\delta_3})|^2\,d\xi &=\int_\bbr|a'|(|p(v)-\pt|-\delta_3)_+^2\,d\xi \\
&\le \int_\bbr|a'|(|p(v)-\pt|-\delta_3/2)_+^2\,d\xi\\
&=\int_\bbr|a'||p(v)-p(\bar v_{\delta_3/2})|^2\,d\xi \leq C  \mathcal{D}(U)\sqrt{\frac{\eps}{\lambda}}.
\end{aligned}
\end{align*}
Likewise, using \eqref{def-bar} and \eqref{y-identity} with $y:=|p(v)-p(\tilde v_\eps)|$, we have
\[
\int_\bbr|a'||p(v)-p(\bar v_{\delta_3})|\,d\xi \le \frac{2}{\delta_3}\int_\bbr|a'||p(v)-p(\bar v_{\delta_3/2})|^2 \,d\xi \leq C \mathcal{D}(U) \sqrt{\frac{\eps}{\lambda}}.
\]
Hence, we obtain \eqref{l3}.

\vskip0.2cm
\noindent{\it Step 4:} 
We use $|p(\bar v)-\pt|\leq \delta_3$ and  \eqref{l3} to show
\begin{eqnarray*}
&&\int_\bbr |a'| \left| |p(v)-\pt|^2 -|p(\bar v) -\pt|^2  \right|\,d\xi\\
&&\qquad \leq \int_\bbr |a'| |p( v)-p(\bar v)| |p(v)+p(\bar v)-2p(\tilde v_\eps)|\,d\xi\\
&&\qquad\leq \int_\bbr |a'| |p(v)-p(\bar v)|
\left( |p(v)-p(\bar v)|+2|p(\bar v) -\pt|    \right)\,d\xi\\
&&\qquad\leq  \int_\bbr |a'| \left( |p(v)-p(\bar v)|^2+2\delta_3 |p(v)-p(\bar v)|             \right)\,d\xi\\
&&\qquad\leq C \mathcal{D}(U) \sqrt{\frac{\eps}{\lambda}},
\end{eqnarray*}
which gives \eqref{l4}.

\vskip0.2cm
\noindent{\it Step 5:} 
First, since $\tilde{v}_\eps\in [v_-/2, v_-]$ for $\eps_0$ small enough, it follows from the definition of the relative pressure \eqref{pressure-relative} that
$$
|p(v|\tilde{v}_\eps)-p(\bar v|\tilde{v}_\eps)|=|(p(v)-p(\bar v))-p'(\tilde{v}_\eps)(v-\bar v)|\leq |p(v)-p(\bar v)|+C|v-\bar v|.
$$
Thus,
\begin{eqnarray*}
&& \int_\bbr|a'| \left|p(v|\tilde{v}_\eps)-p(\bar v|\tilde{v}_\eps)\right | \,d\xi+\int_\bbr|a'| |v-\bar v| \,d\xi\\
&&\leq C\int_\bbr |a'| |p(v)-p(\bar v) |\,d\xi+C\int_\bbr|a'| |v-\bar v| \,d\xi.
\end{eqnarray*}
To control the last term above, we use \eqref{rel_Q1} as follows: If $|v-\bar v|>0$, we have from the definition of $\bar v$ that $|p(\bar v)-p(\tilde v_\eps)|=\delta_3$. Then using \eqref{pressure2}, we find 
\[|\bar v-\tilde v_\eps|\ge \min(c_3^{-1} \delta_3, v_-/2-\eps_0).\]
Taking $\delta_*$ in $2)$ of Lemma \ref{lem-pro} such that $\eps_0\leq \delta_*/2$ and $\min(c_3^{-1} \delta_3, v_-/2-\eps_0)\ge \delta_*$, we use \eqref{rel_Q1} with $w=\tilde{v}_\eps$, $u=\bar v$ and $v=v$ to find that there exists a constant $C>0$ such that 
$$        
C|v-\bar v|\leq Q(v|\tilde{v}_\eps)- Q(\bar v|\tilde{v}_\eps).
$$
Therefore, using \eqref{l3} and \eqref{eq_G} , we find
\begin{eqnarray*}
&& \int_\bbr|a'| \left|p(v|\tilde{v}_\eps)-p(\bar v|\tilde{v}_\eps)\right | \,d\xi+\int_\bbr|a'| |v-\bar v| \,d\xi\\
&&\qquad \leq C\int_\bbr |a'| |p(v)-p(\bar v) |\,d\xi+C\int_\bbr|a'| \big(Q(v|\tilde{v}_\eps)-Q(\bar v|\tilde{v}_\eps)\big) \,d\xi\\
&&\qquad\leq C \mathcal{D}(U) \sqrt{\frac{\eps}{\lambda}}+C[\mathcal{G}_2(U)-\mathcal{G}_2(\bar U)].
\end{eqnarray*}
This together with \eqref{eq_G} completes the proof of \eqref{l5}. 
\end{proof}

\vspace{0.5cm}
We first recall $Y$ in \eqref{badgood} as
\begin{align*}
\begin{aligned}
Y= -\int_\bbr a' \frac{|h-\tilde h_\eps|^2}{2} d\xi -\int_\bbr a' Q(v|\tilde v_\eps) d\xi +\int_\bbr a \Big(-\partial_\xi p(\tilde v_\eps)(v-\tilde v_\eps) +\partial_\xi \tilde h_\eps(h-\tilde h_\eps) \Big) d\xi.
\end{aligned}
\end{align*}
We split $Y$ into three parts $Y_g$,  $Y_b$ and $Y_l$ as below: $Y_g$ consists of the terms related to $v-\tilde v_\eps$,  while $Y_b$ and $Y_l$ consist of   terms related to $h-\tilde h_\eps$. While $Y_b$ is quadratic in $U$, the term $Y_l$ is linear in $h-\tilde h_\eps$. More precisely, $Y$ can be decomposed as
\[
Y=Y_g+Y_b+Y_l,
\]
where
\begin{align*}
\begin{aligned}
Y_g&:= -\frac{1}{2\sigma_\eps^2}\int_\bbr a' |p(v)-p(\tilde v_\eps)|^2 d\xi -\int_\bbr a' Q(v|\tilde v_\eps) d\xi -\int_\bbr a \partial_\xi p(\tilde v_\eps)(v-\tilde v_\eps)d\xi\\
&\quad+\frac{1}{\sigma_\eps}\int_\bbr a \partial_\xi \tilde h_\eps\big(p(v)-p(\tilde v_\eps)\big)d\xi,
\end{aligned}
\end{align*}
\begin{align*}
\begin{aligned}
Y_b&:= -\frac{1}{2}\int_\bbr a' \Big(h-\tilde h_\eps-\frac{p(v)-p(\tilde v_\eps)}{\sigma_\eps}\Big)^2 d\xi -\frac{1}{\sigma_\eps}\int_\bbr a' \big(p(v)-p(\tilde v_\eps)\big)\Big(h-\tilde h_\eps-\frac{p(v)-p(\tilde v_\eps)}{\sigma_\eps}\Big) d\xi.
\end{aligned}
\end{align*}
\begin{align*}
\begin{aligned}
Y_l&=\int_\bbr a \partial_\xi \tilde h_\eps\Big(h-\tilde h_\eps-\frac{p(v)-p(\tilde v_\eps)}{\sigma_\eps}\Big)d\xi.
\end{aligned}
\end{align*}
Notice that the first part $Y_g$ is independent of $h$, and $Y_g(\bar U)$ was used to absorb the bad term $\mathcal{B}$ in Proposition \ref{prop:main3}, while $Y_b$ and $Y_l$ are useless because $\mathcal{B}$ does not depend on $h-\tilde h_\eps$. Therefore we need show that $Y_g(U)-Y_g(\bar U)$, $Y_b(U)$ and $Y_l(U)$ are negligible by other terms. 
We now prove the following lemma.

\begin{lemma}\label{lemma3}
There exist constants $\deu, \eps_0, C, C^*>0$ such that for any $\eps<\eps_0$, and any $\lambda<1/2$ with $\eps/\lambda<\deu$, the following statements hold true.
\begin{itemize}
\item[1] For any $U$ such that $|Y(U)|\leq \eps^2$,
\begin{eqnarray}
\label{n1}
&&|\mathcal{B}(U)-\mathcal{B}(\bar U)|\leq C \sqrt{\frac{\eps}{\lambda}}  \mathcal{D}(U)+C\frac{\eps}{\lambda}[\mathcal{G}_2(U)-\mathcal{G}_2(\bar U)],\\
\label{n2}
&&|\mathcal{B}(U)|\leq C^*\frac{\eps^2}{\lambda} +C\sqrt{\frac{\eps}{\lambda}} \mathcal{D}(U).
\end{eqnarray}
\item[2] For any $U$ such that $|Y(U)|\leq \eps^2$ and $\mathcal{D}(U)\leq \frac{C^*}{4}\frac{\eps^2}{\lambda}$,
\begin{eqnarray}
\label{m1}
&&|Y_g(U)-Y_g(\bar U)|+|Y_b(U)|\leq C\frac{\eps^2}{\lambda},\\
\label{m2}
&&|Y_g(U)-Y_g(\bar U)|+|Y_b(U)|\leq C \sqrt{\frac{\eps}{\lambda}} \mathcal{D}(U) +C [\mathcal{G}_2(U)-\mathcal{G}_2(\bar U)]\\
\nonumber
&&\qquad\qquad\qquad\qquad\qquad\qquad+\left(\frac{\lambda}{\eps}\right)^{1/4}\mathcal{G}_1(U)+C\left(\frac{\eps}{\lambda}\right)^{1/4}\mathcal{G}_2(\bar U),\\
\label{m4}
&&|Y_l(U)|^2\leq \frac{\eps^2}{\lambda}\mathcal{G}_1(U).
\end{eqnarray}
\end{itemize}
\end{lemma}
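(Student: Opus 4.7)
The overall strategy is to estimate every integrand in $\mathcal{B}$, $Y_g$, $Y_b$, and $Y_l$ by term-by-term comparison with the corresponding expression at $\bar U$, using the truncation identity $|p(v)-p(\bar v)|=(|p(v)-p(\tilde v_\eps)|-\delta_3)_+$ from \eqref{def-bar}, the basic weight relations $|a'|=(\lambda/\eps)\partial_\xi p(\tilde v_\eps)$, $|a''|\le C\eps|a'|$, $|\partial_\xi\tilde v_\eps|+|\partial_\xi\tilde h_\eps|\le C(\eps/\lambda)|a'|$, and then invoking the five estimates of Lemma \ref{lemma_oublie} followed by Cauchy--Schwarz and Young.

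For the $\mathcal{B}$-estimate \eqref{n1}, I would split $\mathcal{B}(U)-\mathcal{B}(\bar U)$ into three contributions corresponding to its three defining terms. The $a'|p(v)-p(\tilde v_\eps)|^2$-difference and the $a''|p(v)-p(\tilde v_\eps)|^2$-difference both reduce by \eqref{l4} to $C\sqrt{\eps/\lambda}\,\mathcal{D}(U)$ (the $a''$ piece picks up an extra $\eps$ from \eqref{dd-weight}, which is harmless). The middle term $\sigma_\eps\int a\,\partial_\xi\tilde v_\eps\,[p(v|\tilde v_\eps)-p(\bar v|\tilde v_\eps)]\,d\xi$ is dominated by $C(\eps/\lambda)\int|a'|\,|p(v|\tilde v_\eps)-p(\bar v|\tilde v_\eps)|\,d\xi$, and \eqref{l5} then gives the factor $(\eps/\lambda)[\sqrt{\eps/\lambda}\,\mathcal{D}(U)+\mathcal{G}_2(U)-\mathcal{G}_2(\bar U)]$. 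For \eqref{n2}, it remains to bound $\mathcal{B}(\bar U)$ absolutely: by \eqref{pQ-equi0} (since $|p(\bar v)-p(\tilde v_\eps)|\le\delta_3\le\delta_*$) and \eqref{Q-sim} we have $|p(\bar v)-p(\tilde v_\eps)|^2\le C\,Q(\bar v|\tilde v_\eps)\le C\,Q(v|\tilde v_\eps)$; combined with \eqref{p-est1}, the middle term of $\mathcal{B}(\bar U)$ picks up a further $\eps/\lambda$. Then \eqref{l1} gives $|\mathcal{B}(\bar U)|\le C\eps^2/\lambda$, which fixes the constant $C^*$.

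For the estimates on $Y_g(U)-Y_g(\bar U)$ and $Y_b(U)$, I would treat each of the four terms of $Y_g(U)-Y_g(\bar U)$ exactly as above: the two $|p(v)-p(\tilde v_\eps)|^2$ and $Q$ pieces via \eqref{l4}-\eqref{l5}, the $\int a\,\partial_\xi p(\tilde v_\eps)\,(v-\bar v)$ piece via the change of factor $\partial_\xi p(\tilde v_\eps)=(\eps/\lambda)|a'|$ together with \eqref{l5}, and the $\int a\,\partial_\xi\tilde h_\eps\,[p(v)-p(\bar v)]$ piece via \eqref{l3}. For $Y_b(U)$ I would bound its quadratic piece directly by $|\sigma_\eps|^{-1}\mathcal{G}_1(U)$, and the cross term by Cauchy--Schwarz as $C\sqrt{\mathcal{G}_1(U)\,\mathcal{G}_2(U)}$. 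To obtain \eqref{m1} under the hypothesis $\mathcal{D}(U)\le(C^*/4)\eps^2/\lambda$, I combine these with the global bounds $\mathcal{G}_1(U),\mathcal{G}_2(U)\le C\eps^2/\lambda$ coming from \eqref{l1}. To obtain the sharper \eqref{m2}, I would apply Young's inequality with weight $\theta=(\lambda/\eps)^{1/4}$ to $\sqrt{\mathcal{G}_1\mathcal{G}_2}$, splitting $\mathcal{G}_2(U)=\mathcal{G}_2(\bar U)+(\mathcal{G}_2(U)-\mathcal{G}_2(\bar U))$ and using $(\eps/\lambda)^{1/4}(\mathcal{G}_2(U)-\mathcal{G}_2(\bar U))\le \mathcal{G}_2(U)-\mathcal{G}_2(\bar U)$, so that every bad term is matched to one of the four terms on the right of \eqref{m2}.

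Finally, for \eqref{m4}, since $|\partial_\xi\tilde h_\eps|=|(p(\tilde v_\eps))'|/|\sigma_\eps|=(\eps/\lambda)|a'|/|\sigma_\eps|$, the Cauchy--Schwarz inequality together with $\int|a'|\,d\xi\le C\lambda$ gives
\[
|Y_l(U)|\le C\,\frac{\eps}{\lambda}\Big(\int|a'|\,d\xi\Big)^{1/2}\Big(\int|a'|\big|h-\tilde h_\eps-\tfrac{p(v)-p(\tilde v_\eps)}{\sigma_\eps}\big|^2d\xi\Big)^{1/2}\le C\,\frac{\eps}{\sqrt\lambda}\,\mathcal{G}_1(U)^{1/2},
\]
which yields the claim upon absorbing the bounded constant $|\sigma_\eps|^{-1}$ for $\eps_0$ small. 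The only delicate point is the juggling of weights in \eqref{m2}: one must choose the Young's exponent so that every bad contribution lands on the prescribed right-hand side, and one must verify that the universal constants arising from \eqref{l3}--\eqref{l5} can be absorbed into the $(\lambda/\eps)^{1/4}$ and $(\eps/\lambda)^{1/4}$ factors once $\delta_0$ is taken small enough; this is exactly what will force the final choice of $\delta_2$.
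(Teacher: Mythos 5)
Your overall architecture --- bounding $\mathcal{B}(\bar U)$ via the local estimates of Lemma \ref{lem:local} together with \eqref{l1}, bounding the differences $\mathcal{B}(U)-\mathcal{B}(\bar U)$ and $Y_g(U)-Y_g(\bar U)$ term by term through \eqref{l3}--\eqref{l5}, and treating $Y_l$ by Cauchy--Schwarz with $|\tilde h_\eps'|\le C(\eps/\lambda)|a'|$ and $\int|a'|\,d\xi\le C\lambda$ --- coincides with the paper's proof, and those parts are sound. The genuine gap is in your treatment of the cross term of $Y_b(U)$, which you propose to bound by $C\sqrt{\mathcal{G}_1(U)\,\mathcal{G}_2(U)}$. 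After Cauchy--Schwarz this requires the pointwise control $|p(v)-p(\tilde v_\eps)|^2\le C\,Q(v|\tilde v_\eps)$ for the \emph{untruncated} $v$, which is false: as $v\to 0$ one has $|p(v)-p(\tilde v_\eps)|^2\sim v^{-2\gamma}$ while $Q(v|\tilde v_\eps)\sim v^{1-\gamma}/(\gamma-1)$, so the ratio blows up. This is exactly the difficulty singled out in Step~1 of the introduction, and \eqref{pQ-equi0} is only valid under the hypothesis $Q(v|w)<\delta$ or $|p(v)-p(w)|<\delta$ --- satisfied by $\bar v$ (where $|p(\bar v)-p(\tilde v_\eps)|\le \delta_3$) but not by $v$. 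The same issue undermines your claim that the bound $\mathcal{G}_1(U)\le C\eps^2/\lambda$ ``comes from \eqref{l1}'': estimate \eqref{l1} controls $\int|a'||h-\tilde h_\eps|^2\,d\xi$, but the $|p(v)-p(\tilde v_\eps)|^2$ contribution to $\mathcal{G}_1$ is not reachable from the relative entropy alone.

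The repair, which is what the paper does, is to route every occurrence of $\int_\bbr|a'|\,|p(v)-p(\tilde v_\eps)|^2\,d\xi$ through the decomposition $|\mathcal{B}(U)-\mathcal{B}(\bar U)|+|\mathcal{B}(\bar U)|$: the first piece costs $C\sqrt{\eps/\lambda}\,\mathcal{D}(U)+C(\eps/\lambda)[\mathcal{G}_2(U)-\mathcal{G}_2(\bar U)]$ by \eqref{n1}, and the second satisfies $|\mathcal{B}(\bar U)|\le C\int_\bbr|a'|\,Q(\bar v|\tilde v_\eps)\,d\xi\le C\,\mathcal{G}_2(\bar U)$ precisely because the truncation makes \eqref{pQ-equi0} applicable. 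Substituting this into your Young inequality with weight $(\lambda/\eps)^{1/4}$ on $\mathcal{G}_1(U)$ produces exactly the four terms on the right of \eqref{m2}, and combined with \eqref{l1}, \eqref{n2} and the hypothesis $\mathcal{D}(U)\le\frac{C^*}{4}\frac{\eps^2}{\lambda}$ it also yields \eqref{m1}. With that one substitution your argument closes; without it, the key estimates on $Y_b$ and on $\mathcal{G}_1$ are unjustified for large perturbations, which is the regime the whole lemma is designed to handle.
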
 
\begin{proof}
We split the proof in several steps.
\vskip0.2cm
\noindent{\it Step 1:} Recall the bad term $\mathcal B$ in \eqref{badgood}. Using \eqref{p-est1}, \eqref{pQ-equi0} and $|\tilde{v}_\eps'|+|a''|\leq C|a'|$, and then \eqref{l1}, we have
\beq\label{B-bar}
|\mathcal{B}(\bar U)| \leq C\int_\bbr |a'| Q(\bar v|\tilde{v}_\eps)\,d\xi\leq C\int_\bbr |a'| Q(v|\tilde{v}_\eps)\,d\xi\leq C^*\frac{\eps^2}{\lambda}.
\eeq
Moreover, using \eqref{l4} and \eqref{l5} together with $|\tilde{v}_\eps'|\le C\frac{\eps}{\lambda} |a'|$, we have 
\[
|\mathcal{B}(U)-\mathcal{B}(\bar U) |\leq  C \mathcal{D}(U)\sqrt{\frac{\eps}{\lambda}}+C\frac{\eps}{\lambda}[\mathcal{G}_2(U)-\mathcal{G}_2(\bar U)].
\]
Combining the above two estimates together with \eqref{l2}, we have \eqref{n2}.
\vskip0.2cm

\noindent{\it Step 2:} We show \eqref{m1} as follows: Using \eqref{l3}-\eqref{l5}, we have
\begin{align}
\begin{aligned}\label{Yg1}
|Y_g(U)-Y_g(\bar U)| &\leq C \int_\bbr |a'| \Big(\big| |p(v)-\pt|^2-|p(\bar v)-\pt|^2\big| +\big|Q(v|\tilde{v}_\eps)-Q(\bar v|\tilde{v}_\eps)\big| \\
&\qquad\quad +|v-\bar v|+ |p(v)-p(\bar v)|\Big)\,d\xi\\
&\leq  C\sqrt{\frac{\eps}{\lambda}}\mathcal{D}(U)
+C(\mathcal{G}_2(U)-\mathcal{G}_2(\bar U)).
\end{aligned}
\end{align}
Then using \eqref{l2} and $\mathcal{D}(U)-\mathcal{D}(\bar U)\leq \mathcal{D}(U)\leq C\eps^2/\lambda$, we have $|Y_g(U)-Y_g(\bar U)| \le C\eps^2/\lambda$.\\
Next, recalling $\mathcal{G}_1$ in \eqref{ggd}, we have
\[
|Y_b(U)|\le C \mathcal{G}_1(U)+ C\int_\bbr |a'| |p(v)-\pt|^2 \,d\xi \le C(\mathcal{G}_1(U)+|\mathcal{B}(U)|).
\]
Since
$$
 \mathcal{G}_1(U)\leq \int_\bbr |a'| \left(|h-\tilde h_\eps|^2+|p(v)-\pt|^2\right)\,d\xi,
$$
using \eqref{l1} and \eqref{n2}, we have $|Y_b(U)|\le C\eps^2/\lambda$. 
\vskip0.2cm
\noindent{\it Step 3:} We first estimate the term $\int a' (p(v)-\pt)(h-\tilde{h}_\eps+(p(v)-\pt)/\sigma) d\xi$ in $Y_b$ using Young's inequality with $\eps/\lambda$ as follows: 
\begin{eqnarray*}
&&\Big|\int a' (p(v)-\pt)\left(h-\tilde{h}_\eps+(p(v)-\pt)/\sigma\right) d\xi \Big|\\
&&\qquad\le\left(\frac{\lambda}{\eps}\right)^{1/4}\mathcal{G}_1(U)+C\left(\frac{\eps}{\lambda}\right)^{1/4}\int_\bbr|a'| |p(v)-\pt|^2\,d\xi.
\end{eqnarray*}
Since \eqref{n1} and the first inequality in \eqref{B-bar} yield
\begin{align*}
\begin{aligned}
\int_\bbr|a'| |p(v)-\pt|^2\,d\xi &\leq |\mathcal{B}(U)-\mathcal{B}(\bar U) |+|\mathcal{B}(\bar U) |\\
&\le C\sqrt{\frac{\eps}{\lambda}} \mathcal{D}(U)+C\frac{\eps}{\lambda}[\mathcal{G}_2(U)-\mathcal{G}_2(\bar U)] +C\mathcal{G}_2(\bar U),
\end{aligned}
\end{align*}
we have
\begin{eqnarray*}
&&\Big|\int a' (p(v)-\pt)\left(h-\tilde{h}_\eps+(p(v)-\pt)/\sigma\right) d\xi \Big|\\
&&\qquad\le\left(\frac{\lambda}{\eps}\right)^{1/4}\mathcal{G}_1(U)+C\sqrt{\frac{\eps}{\lambda}} \mathcal{D}(U)+C\frac{\eps}{\lambda}[\mathcal{G}_2(U)-\mathcal{G}_2(\bar U)] +C\left(\frac{\eps}{\lambda}\right)^{1/4}\mathcal{G}_2(\bar U).
\end{eqnarray*}
Therefore, this estimate together with $\lambda/\eps>\delta_0^{-1}\gg1$ and \eqref{Yg1} implies
\begin{eqnarray*}
&&\qquad\qquad\qquad|Y_g(U)-Y_g(\bar U)|+|Y_b(U)|\\
&&\leq  C\sqrt{\frac{\eps}{\lambda}}  \mathcal{D}(U)+C[\mathcal{G}_2(U)-\mathcal{G}_2(\bar U)]+\left(\frac{\lambda}{\eps}\right)^{1/4}\mathcal{G}_1(U)+C\left(\frac{\eps}{\lambda}\right)^{1/4}\mathcal{G}_2(\bar U),
\end{eqnarray*}
which proves \eqref{m2}.

\vskip0.2cm
\noindent{\it Step 4:} Using Cauchy-Schwartz inequality together with $|\tilde{h}_\eps'|\le C\frac{\eps}{\lambda} |a'|$, we find
$$
|Y_l(U)|^2\leq \left(\frac{\eps}{\lambda}\right)^2 \left[\int_\bbr|a'|\,d\xi\right]\int_\bbr|a'| |h-\tilde{h}_\eps+(p(v)-\pt)/\sigma|^2\,d\xi \leq C\frac{\eps^2}{\lambda} \mathcal{G}_1(U),
$$ 
which gives \eqref{m4}.
\end{proof}

\subsection{Proof of Proposition \ref{prop:main}}
We now prove the main Proposition of the paper. We split the proof into two steps, depending on the strength of the dissipation term $\mathcal{D}(U)$.

\vskip0.2cm
\noindent{\it Step 1:} We first consider the case where 
$
\mathcal{D}(U)\geq 4 C^* \frac{\eps^2}{\lambda}, $  where the constant $C^*$ is defined as in Lemma \ref{lemma3}.
Then using $\eqref{n2}$, we find that for $\deu$ small enough,
\begin{align*}
\begin{aligned}
\mathcal{R}(U):=-\frac{|Y(U)|^2}{\eps^4}+\left(1+\delta_0\frac{\eps}{\lambda}\right)|\mathcal{B}(U)|-\mathcal{G}(U)&\leq 2|\mathcal{B}(U)|-\mathcal{D}(U)\\
& \leq 2C^*\frac{\eps^2}{\lambda}+\left(2C\sqrt{\frac{\eps}{\lambda}}-1\right)\mathcal{D}(U)\\
& \leq 2 C^* \frac{\eps^2}{\lambda}-\frac{1}{2}\mathcal{D}(U)\leq 0,
\end{aligned}
\end{align*}
which gives the desired result.

\vskip0.2cm
\noindent{\it Step 2:}  We now assume the other alternative, i.e., $\mathcal{D}(U)\leq 4 C^* \frac{\eps^2}{\lambda}.$ \\
We will use Proposition \ref{prop:main3} to get the desired result. First of all, we have \eqref{YC2}, and for the small constant $\delta_3$ of Proposition \ref{prop:main3} associated to the constant $C_2$ of \eqref{YC2}, we have $|p(\bar v)-\pt|\leq \delta_3.$\\
Let us take $\delta_0$ small enough such that $\delta_0\le\delta_3^8$. Using
$$
Y_g(\bar U)=Y(U)-(Y_g(U)-Y_g(\bar U))-Y_b(U)-Y_l(U),
$$
we have
$$
|Y_g(\bar U)|^2\leq 4|Y(U)|^2+4|Y_g(U)-Y_g(\bar U)|^2+4 |Y_b(U)|^2+4|Y_l(U)|^2,
$$
which can be written as
$$
-4|Y(U)|^2\leq -|Y_g(\bar U)|^2+4|Y_g(U)-Y_g(\bar U)|^2+ 4|Y_b(U)|^2+4|Y_l(U)|^2.
$$
Thus we find that for any $\eps<\eps_0(\le\delta_3)$ and $\eps/\lambda<\delta_0$,
\begin{eqnarray*}
&&\mathcal{R}(U)\le-\frac{4|Y(U)|^2}{\eps\delta_3}+\left(1+\delta_0\frac{\eps}{\lambda}\right)|\mathcal{B}(U)|-\mathcal{G}(U)\\
&&\quad\leq -\frac{|Y_g(\bar U)|^2}{\eps\delta_3}+\left(1+\delta_0\frac{\eps}{\lambda}\right)|\mathcal{B}(\bar U)|-\mathcal{G}_2(\bar U)-(1-\delta_3)\mathcal{D} (U)\\
&&\qquad\qquad +\frac{4}{\eps\delta_3}|Y_g(U)-Y_g(\bar U)|^2+\frac{4}{\eps\delta_3}|Y_b(U)|^2+\frac{4}{\eps\delta_3}|Y_l(U)|^2\\
&&\qquad\qquad+\left(1+\delta_0\frac{\eps}{\lambda}\right)|\mathcal{B}(U)-\mathcal{B}(\bar U)|-(\mathcal{G}_2(U)-\mathcal{G}_2(\bar U)) -\mathcal{G}_1(U)- \delta_3\mathcal{D}(U).
\end{eqnarray*}
To control the square of $|Y_g(U)-Y_g(\bar U)|+|Y_b(U)|$, we multiply the bound of \eqref{m1} and the bound of \eqref{m2} to find
\begin{eqnarray*}
&&\qquad\qquad\qquad\frac{1}{\eps\delta_3}|Y_g(U)-Y_g(\bar U)|^2+\frac{1}{\eps\delta_3}|Y_b(U))|^2\\
&&\leq \frac{C}{\delta_3}\bigg[\left(\frac{\eps}{\lambda}\right)^{3/2} \mathcal{D}(U)+ \frac{\eps}{\lambda}(\mathcal{G}_2(U)-\mathcal{G}_2(\bar U))+ \left(\frac{\eps}{\lambda}\right)^{3/4}\mathcal{G}_1(U)+  \left(\frac{\eps}{\lambda}\right)^{1/4}\frac{\eps}{\lambda}\mathcal{G}_2(\bar U)\bigg]\\
&&\leq C\delta_0^{1/8}\bigg[  \mathcal{D}(U) +(\mathcal{G}_2(U)-\mathcal{G}_2(\bar U))+  \mathcal{G}_1(U)+\frac{\eps}{\lambda}\mathcal{G}_2(\bar U)\bigg].
\end{eqnarray*}
Using also \eqref{m4} and \eqref{n1} together with \eqref{eq_D}, therefore we find that for $\deu$ small enough with $\deu\le\delta_3^8$, 
\beq\label{superfinal}
\mathcal{R}(U)\leq  -\frac{|Y_g(\bar U)|^2}{\eps\delta_3}+\left(1+\delta_3\frac{\eps}{\lambda}\right)|\mathcal{B}(\bar U)|-\left(1-\delta_3\frac{\eps}{\lambda}\right)\mathcal{G}_2(\bar U)-(1-\delta_3)\mathcal{D}(\bar U).
\eeq
Since the above quantities $Y_g(\bar U)$, $\mathcal{B}(\bar U)=\mathcal{B}_1(\bar U)+\mathcal{B}_2(\bar U)$, $\mathcal{G}_2(\bar U)$ and $\mathcal{D}(\bar U)$ depends only on $\bar v$ through $\bar U$, it follows from Proposition \ref{prop:main3} that $\mathcal{R}(U)\le 0$. Hence we complete the proof of Proposition \ref{prop:main}.\\


\begin{appendix}
\setcounter{equation}{0}

\section{Proof of Lemma \ref{lem-alge}}\label{app-1}
We show the following lemma which contains Lemma \ref{lem-alge}.
\begin{lemma}
Let 
\[
g(x):=2x-2x^2-\frac{4}{3}x^3 +\frac{4\theta}{3} \Big(-x^2 -2x \Big)^{3/2},
\]
where $\theta =\sqrt{5-\frac{\pi^2}{3}}\approx 1.308$. The following statements are true.
\begin{itemize}
\item[1]  For any $x\in  [-2,-\frac{1+\sqrt{3}}{2}]$, $g''(x)>0$.
\item[2] For any $x\in (-\frac{1+\sqrt{3}}{2},-1]$, $g'(x)>0$. 
\item[3] The function $g'$ has exactly two roots $x_1$ and $x_2$ on $[-1,0]$. The smaller one $x_1$ belongs to $(-1+\sqrt{2}/2,-1+\sqrt{3}/2)$, and is the only local maximum of $g$ on $(-1,0)$.
\item[4] The function $g$ is negative on $(-2,0)$.  
\end{itemize}
\end{lemma}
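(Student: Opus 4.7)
All four claims concern the function $g$ on $[-2,0]$, where $\phi(x):=-x^2-2x \geq 0$. I will start from the explicit formulas
\begin{equation*}
g'(x) = 2 - 4x - 4x^2 - 4\theta(x+1)\sqrt{\phi(x)}, \qquad g''(x) = -4(1+2x) + \frac{4\theta(2x^2+4x+1)}{\sqrt{\phi(x)}},
\end{equation*}
and the basic trick is: on any subinterval where the two summands of $g'$ (or $g''$) have definite signs, isolate the radical and square to reduce the sign inequality to a polynomial inequality. The substitution $u=x+1$ converts $\phi(x)$ into $1-u^2$ and is convenient on $[-1,0]$.

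For part (1), on $[-2,-(1+\sqrt{3})/2]$ we have $-4(1+2x) \geq 4\sqrt{3}>0$. Split at $x=-1-\sqrt{2}/2$, the root of $2x^2+4x+1$. To the left of this point both terms in $g''$ are non-negative. To the right, $g''>0$ is equivalent (after multiplying by $\sqrt{\phi(x)}>0$ and squaring, using signs) to $(1+2x)^2\phi(x) > \theta^2(2x^2+4x+1)^2$, an explicit polynomial inequality in $x$ that I will verify by expanding to a quartic $P(x)<0$, evaluating $P$ at the two endpoints and a midpoint (each yielding a strictly negative value), and checking monotonicity of $P$ between them. For part (2), note $2-4x-4x^2 = -2(2x^2+2x-1)$ vanishes exactly at $x=(-1\pm\sqrt{3})/2$ and is strictly positive between these roots. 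Since $(-(1+\sqrt{3})/2,-1] \subset ((-1-\sqrt{3})/2,(-1+\sqrt{3})/2)$, the polynomial part of $g'$ is strictly positive there, while $-4\theta(x+1)\sqrt{\phi(x)} \geq 0$ since $x+1 \leq 0$; summing gives $g'>0$.

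For part (3), in the variable $u=x+1 \in [0,1]$, the critical points of $g$ on $[-1,0]$ correspond to solutions of $4\theta u\sqrt{1-u^2} = 2(1+2u-2u^2)$ with both sides non-negative, so squaring yields the quartic
\begin{equation*}
r(u) := (1+2u-2u^2)^2 - 4\theta^2 u^2(1-u^2) = 1+4u-4\theta^2 u^2-8u^3+4(1+\theta^2)u^4.
\end{equation*}
Using $\theta^2=5-\pi^2/3$, I will locate the zeros of $r$ in $[0,1]$ by: observing $r(0)=r(1)=1>0$; studying the cubic $r'(u)/4 = 1 - 2\theta^2 u - 6u^2 + 4(1+\theta^2)u^3$ to locate the critical points of $r$ via rigorous rational enclosures of $\pi^2/3$; and certifying a sign change of $r$ inside $(\sqrt{2}/2,\sqrt{3}/2)$ by evaluating at well-chosen interior points. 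That yields the smaller root $u_1 \in (\sqrt{2}/2,\sqrt{3}/2)$, i.e.\ $x_1 = u_1-1$. The local-max character of $g$ at $x_1$ follows since $g'$ changes from positive (at the left endpoint) through zero to negative, and then back through zero at $x_2$.

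For part (4), I combine the previous parts: $g(-2)=-4/3<0$ and $g(-(1+\sqrt{3})/2)$ is an explicit closed-form expression, strictly negative since $\theta<\sqrt{2}$; by part (1), $g$ is convex on $[-2,-(1+\sqrt{3})/2]$, so its maximum is attained at an endpoint, hence $g<0$ there. By part (2), $g$ is strictly increasing on $(-(1+\sqrt{3})/2,-1]$ and $g(-1)=(4\theta-8)/3<0$ since $\theta<2$. On $[-1,0]$, part (3) shows $g$ has a unique local maximum at $x_1$ and unique local minimum at $x_2$, so its supremum there is $\max(g(x_1),g(0))=\max(g(x_1),0)$; checking $g(x_1)<0$ by bounding $g$ on the small interval localizing $x_1$ closes the argument. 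The main obstacle is the rigorous numerical work in part (3): the coefficients of $r$ involve the transcendental $\pi^2/3$, so the sign changes of $r$ in a narrow interval must be certified by precise rational bounds on $\theta^2$ rather than floating-point approximations.
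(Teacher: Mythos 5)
Parts 1 and 2 of your plan are sound (part 2 is exactly the paper's argument; part 1 is a workable alternative to the paper's cruder uniform bounds, although ``checking monotonicity of $P$ between'' sample points is itself a quartic analysis you would still have to carry out). The fatal problem is part 3: the step ``certifying a sign change of $r$ inside $(\sqrt{2}/2,\sqrt{3}/2)$'' cannot be carried out, because $r$ has no zeros in $[0,1]$. Your reduction is correct --- on $[0,1]$ both $1+2u-2u^2$ and $2\theta u\sqrt{1-u^2}$ are nonnegative, so zeros of $g'$ on $[-1,0]$ correspond exactly to zeros of $r$ on $[0,1]$ --- but with $\theta^2=5-\pi^2/3\approx 1.7101$ one computes $r(1/\sqrt{2})=2-\theta^2\approx 0.29>0$, $r(\sqrt{3}/2)=(\sqrt{3}-\frac{1}{2})^2-\frac{3}{4}\theta^2\approx 0.235>0$, and in fact $\min_{[0,1]}r\approx 0.166>0$ (attained near $u\approx 0.80$, where $r(0.8)=1.7424-0.9216\,\theta^2$). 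Hence $g'>0$ on all of $[-1,0]$, there are no roots $x_1,x_2$, and item 3 of the lemma is false as stated. This is not your error but the paper's: its proof of item 3 hinges on the evaluation $g'(-1+\sqrt{3}/2)=(2-3\theta)\sqrt{3}-1<0$, whereas the correct value is $(2-\theta)\sqrt{3}-1\approx 0.20>0$ (indeed $h_2(-1+\sqrt{3}/2)=4\theta\cdot\frac{\sqrt{3}}{2}\cdot\frac{1}{2}=\sqrt{3}\,\theta$, not $3\sqrt{3}\,\theta$); the integral estimate in the paper's Step 4 inherits the same flaw, since it presupposes the existence of the interior critical point $x_1$.

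Consequently your part 4, which routes through localizing $x_1$ and bounding $g(x_1)$, cannot be executed as written either. The conclusion that actually matters for the rest of the paper (item 4) nevertheless survives, and more simply: once you prove the single quartic inequality $r>0$ on $[0,1]$ --- for which the rational enclosure of $\pi^2/3$ you already planned is exactly the right tool --- you get $g'>0$ on $[-1,0]$ and hence $g<g(0)=0$ there; your convexity/monotonicity treatment of $[-2,-1]$ together with the endpoint values $g(-2)=-4/3$ and $g(-1)=(4\theta-8)/3<0$ then closes $(-2,-1]$. So the correct repair is to replace item 3 by the claim ``$g'>0$ on $[-1,0]$'' and delete the $x_1$, $x_2$ analysis, rather than to hunt for a sign change that does not exist.
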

The point 4 is the result of Lemma  \ref{lem-alge}.
\begin{proof}
\noindent{\bf Step 1.}  Note that 
$$
-x^2-2x=1-(1+x)^2.
$$
This function is increasing on $(-2,-1)$. So,
for  $-2\leq x\le -\frac{1+\sqrt{3}}{2}$ we have 
\begin{equation}\label{eq:calc}
1-(1+x)^2\le 1-\frac{1}{4}(1-\sqrt{3})^2=1-\frac{1}{4}(1+3-2\sqrt{3}) =\frac{\sqrt{3}}{2}.
\end{equation}
Then we have
\begin{align*}
\begin{aligned}
\frac{g'(x)}{2} &= 1-2x-2x^2-2\theta(1+x)\sqrt{1-(1+x)^2},\\
\frac{g''(x)}{2} &= -2-4x -4\theta \sqrt{1-(x+1)^2}+\frac{2\theta}{\sqrt{1-(x+1)^2}}.
\end{aligned}
\end{align*}
So, thanks to (\ref{eq:calc}), if $-2\leq x\le -\frac{1+\sqrt{3}}{2}$:
$$
\frac{g''(x)}{2}\ge -2-4x -4\theta \sqrt{ \frac{\sqrt{3}}{2}} + 2\theta \sqrt{\frac{2}{\sqrt{3}}}.
$$
But we have
\[
-4\theta \sqrt{ \frac{\sqrt{3}}{2}} + 2\theta \sqrt{\frac{2}{\sqrt{3}}} \approx -2.06>-2.1,\qquad \mathrm{and \ \ } -\frac{1+\sqrt{3}}{2}<-\frac{4.1}{4}.
\]
Therefore:
$$
\frac{g''(x)}{2} > -4.1-4x>0,\quad\mathrm{whenever \ \  } -2\leq x\le -\frac{1+\sqrt{3}}{2}.
$$
This proves the point 1 of the lemma.
\vskip0.2cm 
\noindent{\bf Step 2.}  We have 
\beq\label{gh}
g'(x)= \underbrace{2-4x-4x^2}_{=:h_1(x)} -\underbrace{4\theta (x+1)\sqrt{1-(1+x)^2}}_{=:h_2(x)}.
\eeq
Note that $-\frac{1+\sqrt{3}}{2}$  and  $-\frac{1-\sqrt{3}}{2} (>-1)$ are the two roots of $h_1$. Therefore $h_1>0$ on $(-\frac{1+\sqrt{3}}{2},-1]$.  The function $x+1$ is non-positive on this interval, so we have also $h_2\le 0$ on the same interval. Therfore  $g'>0$ on that interval. This proves the point 2.
\vskip0.2cm 
\noindent{\bf Step 3.} For any root $x$ of $g'$, 
$$
P(x):=(h_1(x))^2-(h_2(x))^2=0.
$$
Note that $P$ is a polynomial of order 4, so it has at most 4 roots. 
Using special roots of $h_1$ and $h_2$, we find that 
$$
P(-2)=(h_1(-2))^2>0, \qquad P\left(\frac{-1+\sqrt{3}}{2}\right)=-(h_2)^2<0,\qquad P(-1)=(h_1(-1))^2>0.
$$
Hence $P$ has at least two roots on $(-2,-1)$. Therefore $P$ (and $g'$) cannot have more than 2 roots on $[-1,0]$. However:
$$
g'(-1+\sqrt{2}/2)=2(\sqrt{2}-\theta)>0, \qquad g'(-1+\sqrt{3}/2)=(2-3\theta)\sqrt{3}-1<0, \qquad g'(0)=2>0.
$$
So $g'$ has exactly two roots in $[-1,0]$. One  root $x_1$ is in $(-1+\sqrt{2}/2, -1+\sqrt{3}/2)$ and the other  root $x_2$ is in $(-1+\sqrt{3}/2,0)$. Moreover, $g$ is increasing on $(-1,x_1)$ and on $(x_2,0)$, and decreasing on $(x_1,x_2)$. Hence, $g$ has  a local maximum at $x_1$ and a  a  local minimum at $x_2$. 
\vskip0.2cm 
\noindent{\bf Step 4.}  The function $g$ is continuous on $[-2,0]$, so it attains its maximum on this interval. Assume that this maximum is reached at $x_*\in(-2,0)$. At this point it verifies both 
$g'(x_*)=0$ and $g''(x_*)\leq0$. From Steps 1 and 2, we have $x_*\in (-1,0)$. But from Step 3, we have $x_*=x_1\in (-1+\sqrt{2}/2,-1+\sqrt{3}/2)$. 

Let us consider
\begin{eqnarray*}
&&h_1'(x)=4-8(1+x),\\
&&\sqrt{1-(1+x)^2}h_2'(x)=4\theta(1-2(1+x)^2).
\end{eqnarray*}
We see that these functions are decreasing on $(-1+\sqrt{2}/2,-1+\sqrt{3}/2)$, and non-positive  at $-1+\sqrt{2}/2$, 
that is, 
$$
\mathrm{for \ \ }  x\in(-1+\sqrt{2}/2,-1+\sqrt{3}/2), \ \ h_i'(x)\leq 0 \quad \mathrm{and}\quad h''_i(x)\leq0,\quad\mbox{for }\\ i=1,2. 
$$
Since $g(0)=0$, and $g(x_1)$ is supposed to be a global maximum, we have $g(x_1)\geq0$ and 
\[
I=\int_{-1+\frac{\sqrt{2}}{2}}^{x_1} g'(y)\,dy =g(x_1)-g(-1+\frac{\sqrt{2}}{2})\geq -g(-1+\frac{\sqrt{2}}{2})>0.107.
\]
But using the monotonicity of $h'_1$ and $h'_2$,  and $h'(x_1)=h_2'(x_1)$ (since $g'(x_1)=0$), we have 
\begin{eqnarray*}
&&I=\int_{-1+\frac{\sqrt{2}}{2}}^{x_1} (h'_1(y)-h'_2(y))\,dy\leq (x_1-(-1+\sqrt{2}/2))(h'_1(-1+\sqrt{2}/2)-h'_2(x_1))\\
&&\qquad =(x_1-(-1+\sqrt{2}/2))(h'_1(-1+\sqrt{2}/2)-h'_1(x_1))\\
&&\qquad \leq \frac{\sqrt{3}-\sqrt{2}}{2} (h'_1(-1+\sqrt{2}/2)-h'_1(-1+\sqrt{3}/2)). 
\end{eqnarray*}
 Since
\[
\frac{\sqrt{3}-\sqrt{2}}{2}<0.2,\quad h_1(-1+\frac{\sqrt{2}}{2})-h_1(-2+2\frac{\sqrt{3}}{2})=2\sqrt{2}-2(\sqrt{3}-\frac{1}{2})<0.4,
\]
we have
\[
I\le 0.08,
\]
which contradicts with $I>0.107$. Hence $g$ reaches his maximum only at 0 or -2. Since $g(-2)=-4/3$, and $g(0)=0$, therefore
$$
g(x)<0 \qquad \mathrm{for \ every } \ x\in [-2,0).
$$
\end{proof}

\section{Proof of Lemma \ref{lem-poin}}\label{app-2}
Let $\{P_n:[-1,1]\to\bbr\}_{n\ge0}$ be an orthonormal basis of the Legendre polynomials, that are solutions to Legendre's differential equations:
\beq\label{Leq}
\frac{d}{dx}\Big((1-x^2)\frac{d}{dx}P_n(x)\Big)=-n(n+1)P_n(x),
\eeq
and satisfy the orthonormality in $L^2[-1,1]$, i.e., $\int_{-1}^1 P_i P_j=\delta_{ij}$ and $\int_{-1}^1P_i^2=1$.\\
Then, for any $w\in L^2[-1,1]$, we have $w=\sum_{i=0}^{\infty}c_iP_i$, $c_i=\int_{-1}^1 w(x)P_i(x)dx$.\\
In particular, we see that $P_0(x)=\frac{1}{\sqrt{2}}$, thus $c_0P_0=\frac{1}{2}\int_{-1}^1 w dx=:\bar w$, which is an average of $w$ over $[-1,1]$.
Then, since $w-\bar w=\sum_{i=1}^{\infty}c_iP_i$, using \eqref{Leq}, we have
\begin{align*}
\begin{aligned}
\int_{-1}^1 (1-x^2)|w'|^2 dx &= -\int_{-1}^1\Big((1-x^2)w'\Big)'w dx = -\int_{-1}^1 \Big((1-x^2)w'\Big)'(w-\bar w)dx \\
&=-\sum_{i\ge1}\sum_{j\ge1}\int_{-1}^1 c_i\Big((1-x^2)P_i'\Big)'c_jP_jdx \\
&=\sum_{i\ge1}\sum_{j\ge1}\int_{-1}^1 c_ic_j i(i+1)P_iP_jdx \\
&=\sum_{i\ge1}\int_{-1}^1 i(i+1)c_i^2 P_i^2 dx \ge 2 \sum_{i\ge1}\int_{-1}^1c_i^2P_i^2 dx =2\int_{-1}^1(w-\bar w)^2 dx.
\end{aligned}
\end{align*}
Therefore, we have
\[
\int_{-1}^1(w-\bar w)^2 dx \le \frac{1}{2}\int_{-1}^1 (1-x^2)|w'|^2 dx.
\]
By a change of variable as $W(x):=w(2x-1)$, we have
\[
\int_{0}^1(W-\bar W)^2 dx \le \frac{1}{2}\int_{0}^1 x(1-x)|W'|^2 dx,
\]
where $\bar W=\int_0^1 W dx$.

\end{appendix}

\bibliography{NS_Kang_Vasseur_revision}

\begin{thebibliography}{10}

\bibitem{LP}
Adimurthi, Shyam~Sundar Ghoshal, and G.~D. Veerappa~Gowda.
\newblock {$L^p$} stability for entropy solutions of scalar conservation laws
  with strict convex flux.
\newblock {\em J. Differential Equations}, 256(10):3395--3416, 2014.

\bibitem{BHLRZ}
B.~Barker, J.~Humpherys, O.~Laffite, K.~Rudd, and Zumbrun. K.
\newblock Stability of isentropic navier-stokes shocks,.
\newblock {\em Appl. Math. Lett.}, 21:742--747, 2008.

\bibitem{BD_03}
D.~Bresch and B.~Desjardins.
\newblock Existence of global weak solutions for 2d viscous shallow water
  equations and convergence to the quasi-geostrophic model.
\newblock {\em Comm. Math. Phys.}, 238:211--223, 2003.

\bibitem{BD_06}
D.~Bresch and B.~Desjardins.
\newblock On the construction of approximate solutions for the 2d viscous
  shallow water model and for compressible navier-stokes models,.
\newblock {\em J. Math. Pures Appl.}, 86(9):362--368, 2006.

\bibitem{BDL}
D.~Bresch, B.~Desjardins, and C.K. Lin.
\newblock On some compressible fluid models: Korteweg, lubrication, and shallow
  water systems,.
\newblock {\em Comm. Partial Differential Equations,}, 28:843--868, 2003.

\bibitem{CC}
S.~Chapman and T.G. Cowling.
\newblock The mathematical theory of non-uniform gases.
\newblock {\em Cambridge University Press, London}, 3rd ed., 1970.

\bibitem{CV}
K.~Choi and A.~Vasseur.
\newblock Short-time stability of scalar viscous shocks in the inviscid limit
  by the relative entropy method.
\newblock {\em SIAM J. Math. Anal.}, 47:1405--1418, 2015.

\bibitem{CDNP}
P.~Constantin, T.~D. Drivas, H.~Q. Nguyen, and F.~Pasqualotto.
\newblock Compressible fluids and active potentials,.
\newblock {\em https://arxiv.org/pdf/1803.04492.pdf}, 2018.

\bibitem{Dafermos1}
C.~M. Dafermos.
\newblock The second law of thermodynamics and stability.
\newblock {\em Arch. Rational Mech. Anal.}, 70(2):167--179, 1979.

\bibitem{DiPerna}
R.~J. DiPerna.
\newblock Uniqueness of solutions to hyperbolic conservation laws.
\newblock {\em Indiana Univ. Math. J.}, 28(1):137--188, 1979.

\bibitem{H2}
B.~Haspot.
\newblock New formulation of the compressible navier-stokes equations and
  parabolicity of the density,.
\newblock {\em arXiv:1411.5501}.

\bibitem{H1}
B.~Haspot.
\newblock Porous media, fast diffusion equations and the existence of global
  weak solution for the quasi-solution of compressible {Navier-Stokes}
  equations,.
\newblock {\em Hyperbolic Problems: Theory, Numerics, Applications-Methods and
  Applications of Analysis,}, 20(2):141--164, 2013.

\bibitem{Haspot}
B.~Haspot.
\newblock Existence of global strong solution for the compressible
  {Navier-Stokes equations with degenerate viscosity coefficients in 1D},.
\newblock {\em posted on arXiv,}, 2014.

\bibitem{H3}
B.~Haspot.
\newblock Weak-strong uniqueness for compressible {N}avier-{S}tokes system with
  degenerate viscosity coefficient and vacuum in one dimension.
\newblock {\em Commun. Math. Sci.}, 15(3):587--591, 2017.

\bibitem{HLZ}
J.~Humpherys, O.~Laffite, and Zumbrun. K.
\newblock Stability of isentropic viscous shock profiles in the high-mach
  number limit,.
\newblock {\em Comm. Math. Phys.}, 293:1--36, 2010.

\bibitem{Kang}
M.-J. Kang.
\newblock Non-contraction of intermediate admissible discontinuities for {3-D}
  planar isentropic magnetohydrodynamics.
\newblock {\em Kinet. Relat. Models}, 11(1):107--118, 2018.

\bibitem{Kang19}
M.-J. Kang.
\newblock {$L^2$-type contraction for shocks of scalar viscous conservation
  laws with strictly convex flux}.
\newblock {\em posted on arXiv}, 2019.

\bibitem{KV_vanishing}
M.-J. Kang and A.~Vasseur.
\newblock Uniqueness of entropy shocks to the isentropic euler system in a
  class of inviscid limits from a large family of navier-stokes systems.
\newblock {\em In Preparation}.

\bibitem{KVARMA}
M.-J. Kang and A.~Vasseur.
\newblock Criteria on contractions for entropic discontinuities of systems of
  conservation laws.
\newblock {\em Arch. Ration. Mech. Anal.}, 222(1):343--391, 2016.

\bibitem{Kang-V-1}
M.-J. Kang and A.~Vasseur.
\newblock {$L^2$}-contraction for shock waves of scalar viscous conservation
  laws.
\newblock {\em Annales de l'Institut Henri Poincar\'e (C) : Analyse non
  lin\'eaire}, 34(1):139Ð156, 2017.

\bibitem{KVW}
M.-J. Kang, A.~Vasseur, and Y.~Wang.
\newblock {$L^2$}-contraction for planar shock waves of multi-dimensional
  scalar viscous conservation laws.
\newblock {\em http://arxiv.org/pdf/1609.01825.pdf}.

\bibitem{Leger}
N.~Leger.
\newblock {$L^2$} stability estimates for shock solutions of scalar
  conservation laws using the relative entropy method.
\newblock {\em Arch. Ration. Mech. Anal.}, 199(3):761--778, 2011.

\bibitem{LV}
N.~Leger and A.~Vasseur.
\newblock Relative entropy and the stability of shocks and contact
  discontinuities for systems of conservation laws with non-{BV} perturbations.
\newblock {\em Arch. Ration. Mech. Anal.}, 201(1):271--302, 2011.

\bibitem{LZ}
T.-P. Liu and Y.~Zeng.
\newblock Time-asymptotic behavior of wave propagation around a viscous shock
  profile,.
\newblock {\em Comm. Math. Phys.}, 290:23--82, 2009.

\bibitem{MZ}
C.~Mascia and K.~Zumbrun.
\newblock Stability of small-amplitude shock profiles of symmetric
  hyperbolic-parabolic systems,.
\newblock {\em Comm. Pure Appl. Math.}, 57:841--876, 2004.

\bibitem{MN}
A.~Matsumura and K.~Nishihara.
\newblock On the stability of traveling wave solutions of a one-dimensional
  model system for compressible viscous gas,.
\newblock {\em Japan J. Appl. Math.}, 2:17--25, 1985.

\bibitem{MW}
A.~Matsumura and Y.~Wang.
\newblock Asymptotic stability of viscous shock wave for a one-dimensional
  isentropic model of viscous gas with density dependent viscosity,.
\newblock {\em Methods Appl. Anal.}, 17:279--290, 2010.

\bibitem{MV_sima}
A.~Mellet and A.~Vasseur.
\newblock Existence and uniqueness of global strong solutions for
  one-dimensional compressible navier-stokes equations,.
\newblock {\em SIAM J. Math. Anal.}, 39(4):1344--1365, 2007/08.

\bibitem{Serre-Vasseur}
D.~Serre and A.~Vasseur.
\newblock {$L^2$}-type contraction for systems of conservation laws.
\newblock {\em J. \'Ec. polytech. Math.}, 1:1--28, 2014.

\bibitem{SV_16}
D.~Serre and A.~Vasseur.
\newblock About the relative entropy method for hyperbolic systems of
  conservation laws.
\newblock {\em Contemp. Math. AMS}, 658:237--248, 2016.

\bibitem{SV_16dcds}
D.~Serre and A.~Vasseur.
\newblock The relative entropy method for the stability of intermediate shock
  waves; the rich case.
\newblock {\em Discrete Contin. Dyn. Syst.}, 36(8):4569--4577, 2016.

\bibitem{Shel}
V.V. Shelukhin.
\newblock On the structure of generalized solutions of the one-dimensional
  equations of a polytropic viscous gas,.
\newblock {\em J. Appl. Math. Mech., 48(1984), 665--672; translated from Prikl.
  Mat. Mekh. 48(1984), no. 6, 912--920}.

\bibitem{Vasseur_Book}
A.~Vasseur.
\newblock Recent results on hydrodynamic limits.
\newblock In {\em Handbook of differential equations: evolutionary equations.
  {V}ol. {IV}}, Handb. Differ. Equ., pages 323--376. Elsevier/North-Holland,
  Amsterdam, 2008.

\bibitem{Vasseur-2013}
A.~Vasseur.
\newblock Relative entropy and contraction for extremal shocks of conservation
  laws up to a shift.
\newblock In {\em Recent advances in partial differential equations and
  applications}, volume 666 of {\em Contemp. Math.}, pages 385--404. Amer.
  Math. Soc., Providence, RI, 2016.

\bibitem{VW}
A.~Vasseur and Y.~Wang.
\newblock The inviscid limit to a contact discontinuity for the compressible
  navier-stokes-fourier system using the relative entropy method.
\newblock {\em SIAM J. Math. Anal.}, 47(6):4350--4359, 2015.

\bibitem{VY}
A.~Vasseur and L.~Yao.
\newblock Nonlinear stability of viscous shock wave to one-dimensional
  compressible isentropic navier-stokes equations with density dependent
  viscous coefficient,.
\newblock {\em Commun. Math. Sci.}, 14(8):2215--2228, 2016.

\end{thebibliography}
\end{document}